\newtheorem{theorem}{Theorem}[section]
\newtheorem{lemma}[theorem]{Lemma}
\newtheorem{corollary}[theorem]{Corollary}
\newtheorem{prop}[theorem]{Proposition}
\theoremstyle{definition}
\def\eproof{$\Box$ \medskip}
\def\chb{{\rm Dome}(\Omega)}
\newcommand\rs{\hat{\mathbb{C}}}
\newcommand\Ht{\mathbb{H}^3}
\newcommand\Hp{\mathbb{H}^2}
\title[The Thurston metric and convex hulls]{The Thurston metric on  hyperbolic domains and  boundaries of convex hulls}
\author{Martin Bridgeman}
\address{Boston College}
\author{Richard D. Canary}
\address{University of Michigan}
\date{\today}
\thanks{Bridgeman was partially supported by NSF grant DMS-0707116
and Canary was partially supported by NSF grants DMS-0504791 and DMS-0554239.}
\begin{document}
\begin{abstract} We show that the nearest point retraction is a uniform
quasi-isometry from the Thurston metric on a hyperbolic domain $\Omega\subset\rs$
to the boundary $\chb$ of the convex hull of its complement. As a corollary, one
obtains explicit bounds on the quasi-isometry constant of the nearest point retraction
with respect to the Poincar\'e metric when $\Omega$ is uniformly perfect. We also
establish Marden and Markovic's conjecture that $\Omega$ is uniformly perfect if
and only if the nearest point retraction is Lipschitz with respect to the Poincar\'e metric
on $\Omega$.
\end{abstract}

\maketitle

\section{Introduction}

In this paper we continue the investigation of
the relationship between the geometry of a hyperbolic domain $\Omega$ in $\rs$
and the geometry of the boundary $\chb$ of the convex core of its complement. The surface $\chb$ is hyperbolic in
its intrinsic metric (\cite{EM87,ThBook}) and
the nearest point retraction $r:\Omega\to \chb$ is a conformally natural
homotopy equivalence.
Marden and Markovic \cite{MM} showed
that, if $\Omega$ is uniformly perfect, the nearest point retraction is a quasi-isometry 
with respect to the Poincar\'e metric  on $\Omega$  and
the quasi-isometry constants depend only on the lower bound
for the injectivity radius of $\Omega$ in the Poincar\'e metric.

In this paper, we show that the nearest point retraction is a quasi-isometry with
respect to the Thurston metric  for {\em any} hyperbolic domain $\Omega$ with
quasi-isometry constants which do not depend on the domain.
We recover, as corollaries,
many of the previous results obtained on the relationship between a domain
$\Omega$ and $\chb$ and obtain explicit constants for the first time in several cases. 
We also see that the nearest point retraction is Lipschitz if and only if the
domain is uniformly perfect, which was originally conjectured by Marden
and Markovic \cite{MM}.

We will recall the definition of the Thurston
metric in section \ref{metrics}. It is 2-bilipschitz (see Kulkarni-Pinkall \cite{KP})
to the perhaps more familiar quasihyperbolic metric originally defined by
Gehring and Palka \cite{gehring-palka}. Recall that if $\Omega\subset\mathbb{C}$ and 
$\delta(z)$ denotes
the Euclidean distance from $z\in \Omega$ to $\partial \Omega$, then the quasihyperbolic
metric is simply $q(z)={1\over \delta(z)} dz$.  The Thurston metric has the advantage
of being conformally natural, i.e. invariant under conformal automorphisms of $\Omega$.

\begin{theorem}
\label{projective}
Let $\Omega\subset\rs$ be a hyperbolic domain. Then the nearest point
retraction  \hbox{$r:\Omega \rightarrow \chb$}  is 1-Lipschitz and a $(K,K_0)$-quasi-isometry
with respect to the Thurston 
metric $\tau$ on $\Omega$ and the  intrinsic hyperbolic metric on $\chb$,
where $K\approx 8.49$ and $K_0\approx 7.12$.
In particular,
$$ d_{\chb}(r(z),r(w)) \leq d_{\tau}(z,w) \leq K\ d_{\chb}(r(z),r(w))+K_0.$$
Furthermore, if $\Omega$ is simply connected,  then
$$d_{\chb}(r(z),r(w)) \leq d_{\tau}(z,w) \leq K'\ d_{\chb}(r(z),r(w))+K'_0.$$
where $K'\approx 4.56$ and $K'_0 \approx 8.05$.
\end{theorem}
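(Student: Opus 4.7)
My plan is to prove the 1-Lipschitz inequality and the coarse upper bound separately. The former follows from a pointwise/infinitesimal argument using round disks as support planes; the latter is a shadowing construction that builds a path in $\Omega$ following a geodesic in $\chb$.

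For the 1-Lipschitz estimate, I would use the fact that the infinitesimal Thurston density at $z\in\Omega$ is $\tau(z)=\sup_D \rho_D(z)$, the supremum of Poincar\'e densities over round disks $D\subset\Omega$ containing $z$. Each such $D$ has a hemisphere $\widetilde D\subset\Ht$ that is a support plane for $\chb$, and the horoball expansion defining $r$ restricts to a map $\psi_D\colon D\to\widetilde D$; a M\"obius-equivariant computation (easily verified at the center of $D$) shows that $\psi_D$ is an isometry from $(D,\rho_D)$ onto $\widetilde D$ with its induced hyperbolic metric. When $\widetilde D$ is the support plane at $r(z)$ one has $r(z)=\psi_D(z)$ and, on the interior of the corresponding flat piece of $\chb$, the derivative $dr_z$ agrees with $d\psi_{D,z}$, which yields
\[
\|dr_z(v)\|_{\chb} \;=\; \rho_D(z)\,|v| \;\le\; \tau(z)\,|v|.
\]
Integrating along paths gives $d_{\chb}(r(z),r(w))\le d_{\tau}(z,w)$; the measure-zero set where $r(z)$ lies on the pleating locus is absorbed by continuity of $r$.

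For the coarse upper bound, I would shadow a geodesic $\gamma\subset\chb$ from $r(z)$ to $r(w)$ by a path in $\Omega$ of controlled Thurston length. The geodesic $\gamma$ traverses flat pieces of $\chb$, each contained in a support hemisphere $\widetilde{D_i}$ for some round disk $D_i\subset\Omega$, and meets the pleating lamination transversally with a certain bending measure along $\gamma$. Pulling back each flat subsegment of $\gamma$ by $\psi_{D_i}^{-1}$ produces arcs $\beta_i\subset D_i$ of equal hyperbolic length, hence of equal or smaller Thurston length since $\tau\le\rho_{D_i}$ on $D_i$. Across each bending line one connects the endpoint of $\beta_i$ to the start of $\beta_{i+1}$ by a short transition arc inside $D_i\cap D_{i+1}\subset\Omega$ whose Thurston length is bounded linearly by the local bending angle. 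Summing the pullback lengths and the integrated transition cost along $\gamma$ yields a path in $\Omega$ of Thurston length at most $K\,d_{\chb}(r(z),r(w))+K_0$.

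The hard part will be obtaining the explicit constants $K\approx 8.49$ and $K_0\approx 7.12$. This requires sharp quantitative control of the transition arcs at each bending line, and likely forces one to identify an extremal configuration where the estimate is nearly saturated (plausibly concentrated bending along a short geodesic in $\chb$ corresponding to a thin annular neck in $\Omega$). The improved constants $K'\approx 4.56$, $K'_0\approx 8.05$ in the simply connected case should come from the fact that bending cannot wrap around a nontrivial loop, which eliminates one source of slack and permits a single more efficient global shadow instead of a locally patched one.
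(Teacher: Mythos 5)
Your 1-Lipschitz argument is essentially the paper's: the Thurston density is realized by the support disk $D_z$ at $r(z)$, and $r$ restricted to a face is an isometry from $(D,\rho_D)$ onto the corresponding flat piece (the paper proves this in the finitely punctured case, Lemmas \ref{finiteproj} and \ref{finiteptoh}, and passes to the general case by approximating $\rs-\Omega$ by finite sets). That half is sound.

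The coarse upper bound has a genuine gap. Your shadow path --- pull back each flat subsegment of $\gamma$ by $\psi_{D_i}^{-1}$ and insert a transition arc across each bending line of Thurston length comparable to the bending angle --- is exactly $r^{-1}(\gamma)$, and by Lemma \ref{finiteptoh} its Thurston length is $l_h(\gamma)+i(\gamma)$, where $i(\gamma)$ is the total bending along $\gamma$. Your argument therefore reduces the theorem to the inequality $i(\gamma)\le (K-1)\,l_h(\gamma)+K_0$, and you never address why any such bound holds; this is the entire content of the hard part of the proof, not a matter of ``sharp quantitative control of the transition arcs.'' Worse, the inequality is \emph{false} for the path you build: if $\chb$ contains a very short closed geodesic $\gamma_0$ (e.g.\ the dome of a round annulus of large modulus), then $i(\gamma_0)\ge 2\pi$ while $l_h(\gamma_0)$ is arbitrarily small, and a geodesic arc spiraling $n$ times deep in the collar of $\gamma_0$ has $i\approx 2\pi n$ but tiny length. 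So the shadow path can have Thurston length far exceeding $K\,l_h(\gamma)+K_0$, and no choice of constants rescues it.

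What is missing are the two intersection-number mechanisms the paper supplies. First, a thick-part estimate (Lemma \ref{thickintbound}): a geodesic arc of length at most $G(\mathrm{inj})$ has $i\le 2\pi$; cutting $\gamma$ into segments of length $G(\sinh^{-1}(1))$ then gives a linear bound \emph{provided} each segment meets the thick part. Second, for segments trapped in a cusp or in the collar of a short geodesic, one must abandon $r^{-1}(\gamma)$ entirely and route through $\Omega$ along a different path: a leaf of a foliation transverse to the core (which either misses every edge or lies in a single edge, hence contributes no bending, with length controlled by the angle bound of Lemma \ref{anglebound}) concatenated with the preimage of a cross-section, whose intersection number is uniformly at most $5\pi/2$ (Lemmas \ref{collarcross} and \ref{shortgeointbound}). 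This detour is the idea your proposal lacks. Your guess about the source of the improved simply connected constants is also off: they come from the injectivity radius of $\mathrm{Dome}(\Omega_n)$ along the approximating geodesics tending to infinity, so that the segment length $G(k_n)$ tends to $2\sinh^{-1}(1)$ and the thin-part analysis is never invoked.
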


Marden and Markovic's \cite{MM} proof that the nearest point retraction is
a quasi-isometry in the Poincar\'e metric if $\Omega$ is uniformly perfect
does not yield explicit bounds,
as it makes crucial use of a compactness argument.
Combining Theorem \ref{projective}
with work of Beardon and Pommerenke \cite{BP}, which explains the
relationship between the Poincar\'e metric and the quasihyperbolic
metric, we obtain explicit bounds on the quasi-isometry constants. 
We recall that $\Omega$ is {\em uniformly perfect} if and only if there is
a lower bound for the injectivity radius of $\Omega$ in the Poincar\'e metric.

\begin{corollary}
\label{poincquant}
Let $\Omega$ be a uniformly perfect domain in $\rs$ and
let $\nu>0$ be a lower bound for its injectivity radius in
the Poincar\'e metric,
then $r:\Omega \to\chb$ is  $2\sqrt{2}(k+{\pi^2\over2\nu})$-Lipschitz and a
$(2\sqrt{2}(k+{\pi^2\over2\nu}),K_0)$-quasi-isometry with respect to the Poincar\'e metric.
In particular,
$${1\over 2\sqrt{2}(k+{\pi^2\over 2 \nu})}\ d_{\chb}(r(z),r(w)) \leq d_{\rho}(z,w) \leq K\ d_{\chb}(r(z),r(w))+K_0$$
for all $z,w\in\Omega$ where $k=4+\log (2+\sqrt{2})$.
\end{corollary}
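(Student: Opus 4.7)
\medskip
\noindent\textbf{Proof proposal.} The plan is to bootstrap Theorem \ref{projective} by comparing the Poincar\'e metric $\rho$ to the Thurston metric $\tau$ pointwise on $\Omega$, and then to translate both inequalities through the quasi-isometry between $\tau$ and the intrinsic metric on $\chb$ already established. There are two estimates to produce, one in each direction, so I would split the argument accordingly.

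For the upper bound $d_\rho(z,w)\le K\,d_{\chb}(r(z),r(w))+K_0$, I would first observe the universal pointwise inequality $\rho\le\tau$. This follows from the definition of $\tau(z)$ as a supremum over Poincar\'e densities of round disks $D$ with $z\in D\subset\Omega$, together with the Schwarz-Pick lemma applied to the inclusion $D\hookrightarrow\Omega$, which forces $\rho_\Omega(z)\le\rho_D(z)$ at every point. Integrating along any rectifiable path yields $d_\rho\le d_\tau$, and the upper half of Theorem \ref{projective} then finishes this direction with no further work, in particular preserving the constants $K$ and $K_0$.

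For the lower bound $d_\rho(z,w)\ge\frac{1}{2\sqrt{2}(k+\pi^2/(2\nu))}\,d_{\chb}(r(z),r(w))$, the plan is to establish the reverse pointwise dominance $\tau\le 2\sqrt{2}\bigl(k+\pi^2/(2\nu)\bigr)\rho$ and then combine it with the 1-Lipschitz half $d_{\chb}(r(z),r(w))\le d_\tau(z,w)$ of Theorem \ref{projective}. The reverse pointwise inequality is a composition of two known estimates used at the infinitesimal level. First, the Kulkarni-Pinkall bilipschitz comparison between the Thurston metric and the quasihyperbolic metric $q=|dz|/\delta(z)$ dominates $\tau$ by an absolute constant times $q$. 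Second, the Beardon-Pommerenke inequality \cite{BP}, which uses the uniform perfectness of $\Omega$ crucially, dominates $q$ by $(k+\pi^2/(2\nu))\rho$ pointwise, with $k=4+\log(2+\sqrt{2})$. Multiplying these two pointwise bounds gives the required infinitesimal control of $\tau$ by $\rho$, and integrating along paths upgrades this to the desired bound $d_\tau\le 2\sqrt{2}(k+\pi^2/(2\nu))\,d_\rho$.

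The main obstacle is purely constant-tracking: one needs to extract the sharp forms of the Kulkarni-Pinkall and Beardon-Pommerenke estimates in such a way that their product is exactly $2\sqrt{2}(k+\pi^2/(2\nu))$, matching the statement. Once both sharp pointwise comparisons are recorded, the rest of the proof is a short chain of length-metric manipulations combined with the two halves of Theorem \ref{projective}; the Lipschitz statement is then simply the lower bound applied with $K_0$ absorbed.
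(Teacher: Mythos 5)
Your proposal is correct and is essentially the paper's own proof: the pointwise chain $\rho\le\tau$ together with the Kulkarni--Pinkall bound $\tau\le 2q$ and the Beardon--Pommerenke bound $q\le\sqrt{2}\left(k+\frac{\pi^2}{2\nu}\right)\rho$ (packaged in the paper as Theorem \ref{projtoqh} and Corollary \ref{poincandqh}) yields the two-sided comparison of $d_\rho$ with $d_\tau$, after which both halves of Theorem \ref{projective} finish exactly as you describe. One small slip worth noting: the Thurston metric is the \emph{infimum}, not the supremum, of the Poincar\'e densities of round disks containing the point inside $\Omega$, although Schwarz--Pick applied to each such disk still delivers $\rho\le\tau$.
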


Examples in Section \ref{round}, show that the Lipschitz constant cannot have
asymptotic form better than $O({1\over |\log\nu|\ \nu})$ as $\nu\to 0$. 

When $\Omega$ is simply connected, we obtain:

\begin{corollary}
\label{qiinsc}
Let $\Omega$ be a simply connected domain in $\rs$.
Then
$${1\over 2}\ d_{\chb}(r(z),r(w)) \leq d_{\rho}(z,w) \leq K' \ d_{\chb}(r(z),r(w))+K'_0$$
for all $z,w\in\Omega$.
\end{corollary}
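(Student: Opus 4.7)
The plan is to deduce Corollary \ref{qiinsc} from the simply connected case of Theorem \ref{projective} by comparing the Thurston metric $\tau$ with the Poincar\'e metric $\rho$ on a simply connected hyperbolic domain.

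The key input is the pointwise comparison $\rho \le \tau \le 2\rho$ on a simply connected hyperbolic domain $\Omega$, which should be a standard consequence of the material recalled in Section \ref{metrics}. The upper bound $\tau \le 2\rho$ follows from the Kulkarni--Pinkall realization of $\tau$ as the infimum over maximal round disks in $\Omega$ of their intrinsic hyperbolic density (since any such disk has density at most that of $\Omega$ itself, by Schwarz). The lower bound $\rho \le \tau$ follows because on a simply connected domain one can conformally identify $\Omega$ with a round disk and compare the two densities via Koebe's one-quarter theorem (or directly from the fact that a conformal embedding of a round disk into $\Omega$ is a contraction in the Poincar\'e metric). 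Integrating over rectifiable paths, this yields
$$d_\rho(z,w) \le d_\tau(z,w) \le 2\, d_\rho(z,w)$$
for all $z,w \in \Omega$.

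Granted this, the corollary is immediate from Theorem \ref{projective}. For the lower bound, the theorem gives $d_{\chb}(r(z),r(w)) \le d_\tau(z,w)$, and combining with $d_\tau(z,w) \le 2\, d_\rho(z,w)$ produces
$$\tfrac{1}{2}\, d_{\chb}(r(z),r(w)) \;\le\; d_\rho(z,w).$$
For the upper bound, using $d_\rho(z,w) \le d_\tau(z,w)$ together with the simply connected inequality $d_\tau(z,w) \le K'\, d_{\chb}(r(z),r(w)) + K_0'$ from Theorem \ref{projective} yields
$$d_\rho(z,w) \;\le\; K'\, d_{\chb}(r(z),r(w)) + K_0',$$
with the same constants $K' \approx 4.56$ and $K_0' \approx 8.05$.

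There is no serious obstacle here once the density comparison $\rho \le \tau \le 2\rho$ is in hand; the corollary is a one-line substitution. The only point requiring a bit of care is that this comparison genuinely uses simple connectivity (it fails in general: for a very thin annulus, $\tau$ can be much larger than $\rho$), so one must apply Thurston's theorem rather than any estimate that would degrade with the injectivity radius as in Corollary \ref{poincquant}. After a Möbius normalization placing $\infty \notin \Omega$ one may also freely use the Euclidean description of the quasihyperbolic metric if the proof of $\rho \le \tau \le 2\rho$ is carried out via that route.
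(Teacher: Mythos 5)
Your argument is correct and is essentially the paper's: the pointwise comparison $\rho \le \tau \le 2\rho$ you want is exactly Theorem \ref{scThurPoin} (Herron--Ma--Minda), recalled in Section \ref{metrics}, and the corollary then follows from the simply connected case of Theorem \ref{projective} by precisely the substitution you describe. (One minor slip: your attributions are swapped --- the Schwarz/monotonicity argument gives the lower bound $\rho\le\tau$, which holds for any hyperbolic domain, while it is the upper bound $\tau\le 2\rho$ that genuinely uses simple connectivity and is the nontrivial half of Theorem \ref{scThurPoin}; since that theorem is quoted in the paper, this does not affect the proof.)
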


In particular, this gives a simple reproof of Epstein, Marden and Markovic's 
\cite{EMM1} result
that the nearest point retraction is 2-Lipschitz. It only relies on the elementary 
fact that $r$ is 1-Lipschitz (see Lemma \ref{finiteptoh}) with respect to the Thurston
metric and a result of Herron, Ma and Minda \cite{HMM05} concerning the relationship
between the Thurston metric and the Poincar\'e metric in the simply connected setting.

\medskip

Marden and Markovic \cite{MM} also showed that the nearest point retraction
is a quasi-isometry (with respect to the Poincar\'e metric) if and only if $\Omega$
is uniformly perfect. As another corollary of Theorem \ref{projective} we obtain the following 
alternate characterization of uniformly  perfect domains which was  conjectured by
Marden and Markovic. Corollaries \ref{poincquant} and \ref{MMconj} together
give an alternate proof of Marden and Markovic's original characterization.

\begin{corollary}
\label{MMconj}
The  nearest point retraction $r:\Omega\to\chb$ is Lipschitz,
with respect to the Poincar\'e metric on $\Omega$ and the intrinsic
hyperbolic metric on $\chb$ if and only if the set
$\Omega$ is uniformly perfect.   Moreover, if $\Omega$ is not uniformly
perfect, then $r$ is not a quasi-isometry with respect to the Poincar\'e metric.
\end{corollary}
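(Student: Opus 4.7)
The Corollary contains two statements: the ``if and only if'' asserts that the Lipschitz property of $r$ with respect to $\rho$ characterizes uniform perfectness, and the ``moreover'' upgrades this to rule out even a quasi-isometry in the non-uniformly perfect case. Since the ``moreover'' implies the forward direction of the iff, only two substantive things need to be proved: (a) uniformly perfect $\Rightarrow$ Lipschitz, which is already contained in Corollary \ref{poincquant} (it provides an explicit Lipschitz constant from any lower bound on the injectivity radius); and (b) not uniformly perfect $\Rightarrow$ not a quasi-isometry. Only (b) requires genuine argument.

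For (b), my plan is to reduce to an assertion about the Thurston and Poincar\'e metrics on $\Omega$ alone. Assume, for contradiction, that $r$ is an $(A,B)$-quasi-isometry with respect to $\rho$. Combining $d_{\chb}(r(z), r(w)) \le A\, d_\rho(z,w) + B$ with the upper bound $d_\tau(z,w) \le K\, d_{\chb}(r(z),r(w)) + K_0$ from Theorem \ref{projective} yields $d_\tau(z,w) \le KA\, d_\rho(z,w) + (KB + K_0)$. Since $\rho \le \tau$ pointwise as densities (so that $d_\rho \le d_\tau$), the identity map is then a quasi-isometry between $(\Omega, \rho)$ and $(\Omega, \tau)$. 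The task thus reduces to showing: if $\Omega$ is not uniformly perfect, then the Thurston and Poincar\'e length metrics on $\Omega$ are not quasi-isometric via the identity.

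To prove this reduced statement, I would appeal to the Beardon-Pommerenke inequalities \cite{BP} together with the Kulkarni-Pinkall bilipschitz comparison \cite{KP} between $\tau$ and the quasihyperbolic metric $|dz|/\delta(z)$. Non-uniform perfectness supplies a sequence of boundary points $\zeta_n \in \partial\Omega$ and scales $r_n, c_n$ with $c_n \to 0$ such that the concentric annulus $\{c_n r_n < |z - \zeta_n| < r_n\}$ lies entirely in $\Omega$. Along the radial segment from $z_n = \zeta_n + c_n r_n$ to $w_n = \zeta_n + r_n$, the Thurston length is comparable to $\int ds/s = \log(1/c_n)$ and tends to infinity, while the Beardon-Pommerenke upper bound $\rho(z) \le \pi^2/(\delta(z) \log(1/\beta_\Omega(z)))$ forces the Poincar\'e length to grow only like $\log\log(1/c_n)$. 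This yields pairs with $d_\tau(z_n, w_n) \to \infty$ and $d_\rho(z_n, w_n) = O(\log d_\tau(z_n, w_n))$, so no inequality of the form $d_\tau \le C d_\rho + D$ can hold, delivering the contradiction.

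The main obstacle is in this last step: converting the pointwise density comparison into a bona fide distance comparison. Concretely, one must verify that $\beta_\Omega$ remains uniformly small along the chosen radial segment (not merely at its endpoints) so that the Beardon-Pommerenke upper bound on $\rho$ is effective throughout the integration, and one must check that the hypothesis of non-uniform perfectness actually does produce annuli of the required form in an arbitrary domain, not just a model punctured disk. Once these ingredients are in hand, the sub-logarithmic growth of $d_\rho$ relative to $d_\tau$ follows by direct integration.
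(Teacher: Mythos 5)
Your proposal is correct and follows essentially the same route as the paper: Corollary \ref{poincquant} gives the uniformly perfect direction, and for the converse one takes a large-modulus essential round annulus across which the quasihyperbolic (hence Thurston, by Kulkarni--Pinkall) distance grows like the modulus while the Beardon--Pommerenke bound prevents the Poincar\'e distance from growing comparably, the conclusion then being transferred to $\chb$ via Theorem \ref{projective}. The obstacle you flag about controlling $\beta$ along the whole radial segment is handled in the paper by restricting to the middle subannulus $\{e^{-n}\le |z|\le e^{n}\}$ of the round annulus $\{e^{-2n}\le |z|\le e^{2n}\}$, where $\beta(z)>n$ and $\tfrac12|z|\le\delta(z)\le 2|z|$ hold throughout; this makes $d_\rho(x_n,y_n)$ uniformly bounded (rather than merely sub-logarithmic in $d_\tau(x_n,y_n)$), which immediately kills both the Lipschitz and the quasi-isometry property at once.
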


Sullivan \cite{sullivan,EM87} showed that there exists a uniform $M$ such
that if $\Omega$ is simply connected, then there is a conformally natural
$M$-quasiconformal map from $\Omega$ to $\chb$ which admits a bounded
homotopy to the nearest point retraction.
(We say that a map from $\Omega$ to $\chb$
is {\em conformally natural} if it is $\Gamma$-equivariant whenever $\Gamma$ is a group of
M\"obius transformations preserving $\Omega$.) 
Epstein, Marden and Markovic \cite{EMM1,EMM2} showed that the best
bound on $M$  must lie strictly between 2.1 and 13.88.
While, combining work of Bishop \cite{bishop} and Epstein-Markovic
\cite{EM}, the best bound of $M$ lies between
2.1 and 7.82  if
we do not require the quasiconformal map to be conformally natural. 

Marden and Markovic \cite{MM} showed that a domain
is uniformly perfect if and only if there is a quasiconformal map
from $\Omega$ to $\chb$ which is homotopic
to the nearest point retraction by a bounded homotopy.
Moreover, they show that there are bounds
on the quasiconformal constants which depend only on a lower
bound for the injectivity radius of $\Omega$. We note that again these
bounds are not concrete.

In order to use the nearest point retraction to obtain a conformally
natural quasiconformal map between $\Omega $ and $\chb$ 
we show that $r$  lifts to
a quasi-isometry between the universal cover $\tilde\Omega$ of $\Omega$ and
the universal cover ${\rm Dome}(\tilde\Omega)$ of $\chb$. Although
they do not make this explicit in their paper, this is essentially the same approach taken
by Marden and Markovic \cite{MM}.
We recall that quasi-isometric surfaces
need not even be homeomorphic and that even homeomorphic quasi-isometric
surfaces need not be quasiconformal to one another. Notice that the
quasi-isometry constants for the lift of the nearest point retraction are,
necessarily, not as good as those for the nearest point retraction.
Examples in Section \ref{round} show that the first quasi-isometry constant
of $\bar r$
cannot have form better than $O(\nu e^{\pi^2\over 2\nu})$ as $\nu\to 0$.

\begin{theorem}
\label{nprlift}
Suppose that $\Omega$ is a uniformly perfect hyperbolic domain and
$\nu>0$ is a lower bound for its injectivity radius in the Poincar\'e metric. Then
the nearest point retraction lifts to a quasi-isometry
$$\tilde r:\tilde\Omega \to {\rm Dome}(\tilde\Omega)$$
between the universal
cover of $\Omega$ (with the Poincar\'e metric) and the universal cover
of $\chb$ with quasi-isometry constants depending only on $\nu$.  In particular,
$${1\over 2\sqrt{2}(k+{\pi^2\over 2 \nu})}\ d_{{\rm Dome}(\tilde\Omega)}(\tilde r(z),\tilde r(w)) \leq d_{\tilde\Omega}(z,w)\le L(\nu) d_{{\rm Dome}(\tilde\Omega)}(\tilde r(z),\tilde r(w)) + L_0$$
for all $z,w\in \tilde\Omega$ where $L(\nu)=O(e^{\pi^2\over 2\nu})$ as $\nu\to 0$
and $L_0\approx 8.05$.
\end{theorem}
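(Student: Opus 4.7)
I would begin by constructing an equivariant lift $\tilde r : \tilde\Omega \to {\rm Dome}(\tilde\Omega)$ of the nearest point retraction. Such a lift exists since $r$ is a homotopy equivalence, inducing an isomorphism $r_*:\pi_1(\Omega)\to\pi_1(\chb)$, and it is unique up to post-composition with a deck transformation. The Lipschitz (first) inequality in the theorem then follows immediately from Corollary \ref{poincquant} and path lifting: for any rectifiable path $\gamma$ from $z$ to $w$ in $\tilde\Omega$, the equivariance of $\tilde r$ combined with the local-isometry property of the covering projections $\pi:\tilde\Omega\to\Omega$ and $\pi_D:{\rm Dome}(\tilde\Omega)\to\chb$ yields
$${\rm length}(\tilde r \circ \gamma) = {\rm length}(r \circ \pi \circ \gamma) \leq 2\sqrt{2}\bigl(k + \tfrac{\pi^2}{2\nu}\bigr){\rm length}(\gamma),$$
and minimizing over $\gamma$ gives the first inequality.

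For the upper bound in the opposite direction, the strategy is to construct an equivariant coarse inverse $\tilde s : {\rm Dome}(\tilde\Omega) \to \tilde\Omega$ to $\tilde r$ and use it to convert a dome-geodesic of length $D = d_{{\rm Dome}(\tilde\Omega)}(\tilde r z, \tilde r w)$ into a controlled path from $z$ to $w$ in $\tilde\Omega$. Downstairs, Corollary \ref{poincquant} yields a coarse inverse $s$ of $r$ with bounded Poincar\'e displacement of $s \circ r$ from the identity; lifting $s$ equivariantly so that $\tilde s \circ \tilde r$ is likewise boundedly close to the identity on $\tilde\Omega$, discretizing a dome-geodesic into short subsegments, applying $\tilde s$ to the endpoints, and summing, produces an estimate of the form
$$d_{\tilde\Omega}(z,w) \leq 2C + K_s \cdot D,$$
where $C$ bounds $d_{\tilde\Omega}(x, \tilde s \tilde r x)$ and $K_s$ is the quasi-Lipschitz constant of $\tilde s$. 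The additive constant $L_0 \approx 8.05$ should be tracked through the corresponding simply-connected constant $K_0'$ in Theorem \ref{projective}, applied in the lifted setting (where $\tilde\Omega$ is simply connected as a projective surface).

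The main obstacle is bounding $K_s$ in the Poincar\'e metric. Downstairs, $s$ has Lipschitz constant polynomial in $1/\nu$ by the Beardon-Pommerenke and Kulkarni-Pinkall estimates, but its equivariant lift need not retain this constant: in a Margulis thin part of $\Omega$ around a closed geodesic of length $\sim 2\nu$, a short dome-arc may project to a loop nontrivial in $\pi_1(\Omega)$, so that its equivariant lift to $\tilde\Omega$ traverses the universal cover of the collar and acquires Poincar\'e length of order $e^{\pi^2/(2\nu)}$ per crossing. The bulk of the work is therefore a careful collar-by-collar analysis of the thin parts, quantifying how much Poincar\'e length an equivariant lift of $s$ must pick up to remain in the correct homotopy class; combined with the discretization above and with the bounded-geometry behavior on the complement of the thin parts, this yields the stated $L(\nu) = O(e^{\pi^2/(2\nu)})$.
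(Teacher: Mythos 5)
Your first inequality is fine, and you have correctly located the difficulty in the reverse direction: the exponential blow-up really does come from the thin parts, where a dome-curve that is short but homotopically nontrivial forces its $\tilde\Omega$-counterpart to cross the universal cover of a collar. But your proof stops exactly where the work begins. The sentence asserting that a ``careful collar-by-collar analysis'' of the equivariant lift of a coarse inverse $s$ ``yields the stated $L(\nu)=O(e^{\pi^2/(2\nu)})$'' is not an argument; it is the conclusion restated. Nothing in your proposal produces the quantity $e^{\pi^2/(2\nu)}$, and the coarse-inverse/discretization framework you set up does not obviously lead to it: you would still need to prove that a dome-arc of definite length lifts to an $\tilde\Omega$-arc of length at most $O(e^{\pi^2/(2\nu)})$ \emph{in the correct homotopy class}, which requires a lower bound on the injectivity radius of $\chb$ in terms of $\nu$ --- and that input never appears in your outline. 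A second, smaller problem is the additive constant: $\tilde\Omega$ is not a plane domain, so you cannot ``apply Theorem \ref{projective} in the lifted setting'' to get $K_0'$; the numerical agreement $L_0\approx K_0'\approx 8.05$ is a coincidence of the form $2\pi+G(\cdot)$ with $G\le 2\sinh^{-1}(1)$, not an application of the simply connected case.

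The paper's route avoids constructing an inverse map altogether. Let $\beta$ be the projection to $\Omega$ of the $\tilde\Omega$-geodesic from $z$ to $w$, and let $\alpha$ be the geodesic arc on $\chb$ in the proper homotopy class of $r\circ\beta$, so that $l_h(\alpha)=d_{{\rm Dome}(\tilde\Omega)}(\tilde r(z),\tilde r(w))$. In the finitely punctured approximations, Lemma \ref{finiteptoh} gives the exact identity $l_\tau(r^{-1}(\alpha))=l_h(\alpha)+i(\alpha)$, and $i(\alpha)$ is bounded by cutting $\alpha$ into $\lceil l_h(\alpha)/G(a)\rceil$ segments, each contributing at most $2\pi$ by Lemma \ref{thickintbound}, where $a$ is a lower bound for the injectivity radius of the dome along $\alpha$. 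The exponential enters precisely here, via Lemma 9.1 of \cite{BC03}: if ${\rm inj}_\Omega\ge\nu$ then ${\rm inj}_{\chb}\ge g(\nu)=\tfrac12 e^{-m}e^{-\pi^2/2\nu}$, whence $L(\nu)=(2\pi+G(g(\nu)))/G(g(\nu))=O(1/g(\nu))=O(e^{\pi^2/2\nu})$ since $G(x)\asymp x$ near $0$. Finally, $r^{-1}(\alpha)$ contains an arc homotopic to $\beta$ rel endpoints, so $d_{\tilde\Omega}(z,w)\le l_\rho(r^{-1}(\alpha))\le l_\tau(r^{-1}(\alpha))$, which closes the estimate. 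If you want to salvage your approach, the missing ingredient is exactly this lower bound on ${\rm inj}_{\chb}$ (or an equivalent collar-width comparison between $\Omega$ and $\chb$); without it the collar-by-collar analysis cannot be quantified.
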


We then apply work of Douady and Earle \cite{douady-earle} to obtain
explicit bound on the quasiconformal map homotopic to the nearest point
retraction when $\Omega$ is uniformly perfect.

\begin{corollary}
\label{upqc}
There exists an explicit function $M(\nu)$ such that
if $\Omega$ is uniformly perfect and $\nu>0$ is a lower bound
for its injectivity radius in the Poincar\'e metric, then there is a conformally natural
$M(\nu)$-quasiconformal map $\phi:\Omega\to\chb$ which admits
a bounded homotopy to $r$.
Moreover, if $\Omega$ is not uniformly perfect, then there does
not exist a bounded homotopy of $r$ to a quasiconformal map.
\end{corollary}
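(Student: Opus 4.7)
The plan for the forward direction is to build $\phi$ by pushing $\tilde r$ out to the circle at infinity and then replacing it with a Douady--Earle extension. By Theorem~\ref{nprlift}, the nearest point retraction lifts to a quasi-isometry $\tilde r:\tilde\Omega\to{\rm Dome}(\tilde\Omega)$ with constants depending explicitly on $\nu$. Since $\tilde\Omega$ is simply connected, both $\tilde\Omega$ (with the Poincar\'e metric) and ${\rm Dome}(\tilde\Omega)$ (with its intrinsic hyperbolic metric) are isometric to $\Hp$. A quasi-isometry between copies of $\Hp$ extends continuously to a quasi-symmetric map $\partial\tilde r:S^1_\infty\to S^1_\infty$, whose quasi-symmetry constant can be bounded explicitly in terms of $L(\nu)$ and $L_0$.

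I then feed $\partial\tilde r$ into the Douady--Earle extension to obtain a quasiconformal homeomorphism $\Phi:\Hp\to\Hp$ with an explicit dilatation $M(\nu)$ depending on the quasi-symmetry constant, hence on $\nu$. To descend $\Phi$ to a map $\Omega\to\chb$, I check equivariance: the nearest point retraction, and therefore $\tilde r$, commutes with every M\"obius transformation of $\rs$ preserving $\Omega$, and any such transformation acts on both copies of $\Hp$ by a M\"obius isometry. The boundary map $\partial\tilde r$ inherits this equivariance, and because Douady--Earle is conformally natural, $\Phi$ is equivariant as well. The descended map $\phi:\Omega\to\chb$ is therefore conformally natural and $M(\nu)$-quasiconformal.

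For the bounded homotopy, the key point is that $\Phi$ and $\tilde r$ continuously extend the same boundary map and are both quasi-isometries of $\Hp$ (using that a quasiconformal self-map of $\Hp$ is automatically a quasi-isometry with constants controlled by its dilatation). Two quasi-isometries of $\Hp$ with identical boundary values are uniformly close, so the geodesic homotopy between $\Phi$ and $\tilde r$ on $\tilde\Omega$ has uniformly bounded tracks; equivariance then makes it descend to a bounded homotopy between $\phi$ and $r$.

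For the converse, suppose $r$ is boundedly homotopic to some quasiconformal $\phi:\Omega\to\chb$. Lifting produces a quasiconformal map between copies of $\Hp$, which is therefore a quasi-isometry, and this lift is boundedly close to $\tilde r$. Hence $\tilde r$ is a quasi-isometry, and so is $r:\Omega\to\chb$ in the Poincar\'e metric, which by Corollary~\ref{MMconj} forces $\Omega$ to be uniformly perfect. The main obstacle will be producing the \emph{explicit} function $M(\nu)$: this requires a quantitative version of Douady--Earle's estimate that converts the quasi-symmetry constant of $\partial\tilde r$ into a concrete quasiconformal bound, the rest of the argument being equivariance bookkeeping and standard facts about quasi-isometries of $\Hp$.
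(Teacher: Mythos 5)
Your proposal is correct and follows essentially the same route as the paper: lift $r$ to the quasi-isometry $\tilde r$ of Theorem \ref{nprlift}, extend to a quasisymmetry of $S^1$ with explicit constant, apply the conformally natural Douady--Earle extension with its quantitative dilatation bound, descend by equivariance, and get the bounded homotopy from the fact that two quasi-isometries of $\Hp$ with the same boundary values are uniformly close; the converse via ``quasiconformal implies quasi-isometry'' plus Corollary \ref{MMconj} is also exactly the paper's argument. The only detail worth noting is that the paper's explicit $M(\nu)$ uses the quasi-isometry constant $\max\{2\sqrt{2}(k+{\pi^2\over 2\nu}),L(\nu)\}$, combining the Lipschitz bound from Corollary \ref{poincquant} with the lower bound from Theorem \ref{nprlift}.
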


We will give explicit formulas for $L$ and $M$ in section \ref{thelift}.

\bigskip

The table below summarizes the main results concerning the nearest point
retraction in the cases that the domain is simply connected, uniformly perfect
and hyperbolic but not uniformly perfect.

{\scriptsize
\begin{center} \begin{tabular}{ | c | c | c | c | } 
\hline &  $\Omega$ simply connected   &$\Omega$ uniformly perfect & $\Omega$ hyperbolic \\ 
       &  & $\mbox{inj}_{\Omega} > \nu$  & but not uniformly perfect\\ 
\hline Lipschitz bounds on $r$ &  $r$ is 2-Lipschitz &  $r$ is $K(\nu)$-Lipschitz  &   $r$ is not Lipschitz  \\ 
in Poincar\'e metric & (see \cite{EMM1} and Cor. \ref{qiinsc}) & where $K(\nu) = 2\sqrt{2}(k + \frac{\pi^2}{2\nu})$ $k \approx 5.76$ &  (Corollary \ref{MMconj})  \\
& & (Corollary \ref{poincquant}) & \\  
\hline quasi-isometry bounds on $r$ &  $r$ is a $(K',K'_0)$-quasi-isometry & $r$ is a 
$(K(\nu),K_0)$-quasi-isometry &$r$ is not a\\
in Poincar\'e metric & where $K'\approx 4.56$, $K'_0\approx  8.05$ &where $K_0\approx  7.12$  &  quasi-isometry\\  
 & (Corollary \ref{qiinsc}) & (Corollary \ref{poincquant}) & (see \cite{MM} and Cor. \ref{MMconj}) \\
\hline Lipschitz bounds on r &  $r$ is $1$-Lipschitz & $r$ is $1$-Lipschitz & $r$ is $1$-Lipschitz \\ 
in Thurston metric & (Theorem \ref{projective})  &  (Theorem \ref{projective}) &  (Theorem \ref{projective}) \\ 
\hline quasi-isometry bounds on $r$ & $r$ is a $(K',K'_0)$-quasi-isometry & $r$ is a $(K,K_0)$-quasi-isometry & $r$ is a $(K,K_0)$-quasi-isometry \\
in Thurston metric & (Theorem \ref{projective}) & where $K \approx 8.49$ & (Theorem \ref{projective})   \\  
 &  & (Theorem \ref{projective})&\\  

\hline conformally natural  &  $M$-quasiconformal map
 & $M(\nu)$-quasiconformal  map & no quasiconformal map  \\ 
quasiconformal map & where $2.1 <  M < 13.88$  &  (Corollary \ref{upqc}) & (see \cite{MM} and Cor. \ref{upqc})\\
homotopic to $r$ & (see \cite{EMM1} and \cite{EMM2}) & &  \\
by a bounded homotopy &  & & \\
\hline   quasiconformal map &  $M'$-quasiconformal map
 & $M(\nu)$-quasiconformal map & no quasiconformal map  \\ 
homotopic to $r$ & where  $2.1 <  M' < 7.82$ & (Corollary \ref{upqc}) & (see \cite{MM} and Cor. \ref{upqc})  \\
by a bounded homotopy & (see \cite{bishop} and \cite{EM}) & &  \\

\hline
\end{tabular} \end{center}
}

\bigskip

The only constant in the table above which is known to be sharp is the Lipschitz constant in
the simply connected case. It would be desirable to know sharp bounds 
(or better bounds) in the remaining cases. We refer the reader to the excellent
discussion of the relationship between the nearest point retraction and complex
analytic problems, especially Brennan's conjecture, in Bishop \cite{bishop-brennan}.

\bigskip

One of the main motivations for the study of domains
and the convex hulls of their complements comes from the study of hyperbolic
3-manifolds where one is interested in the relationship between the boundary of
the convex core of a hyperbolic manifold and the boundary of its conformal
bordification. 
If $N=\Ht/\Gamma$ is a hyperbolic 3-manifold,
one considers the domain of discontinuity $\Omega(\Gamma)$ for
$\Gamma$'s action on $\rs$. The {\em conformal boundary}
$\partial_cN$ is the quotient $\Omega(\Gamma)/\Gamma$ and 
$\hat N=N\cup\partial_cN$ is the natural conformal bordification of $N$.
The {\em convex core} $C(N)$ of $N$ is obtained as the quotient of  the convex hull
of $\rs-\Omega(\Gamma)$ and its boundary $\partial C(N)$ is the quotient
of $\chb$.
The nearest point retraction descends to a homotopy equivalence
$$\bar r:\partial_cN\to\partial C(N).$$
In this setting, Sullivan's theorem implies that if $\partial_cN$ is  incompressible
in $\hat N$, then there is a quasiconformal map, with uniformly bounded dilatation, 
from $\partial_cN$ to
$\partial C(N)$ which is homotopic to the nearest point retraction. 

Since
the Thurston metric is $\Gamma$-invariant and the nearest point
retraction is $\Gamma$-equivariant we immediately obtain a manifold
version of Theorem \ref{projective}.

\begin{theorem}
\label{projectivemanifold}
Let $N=\Ht/\Gamma$ be a hyperbolic 3-manifold. Then the nearest point
retraction  $r:\partial_cN\to C(N)$  is a $(K,K/K_0)$-quasi-isometry
with respect to the Thurston 
metric $\tau$ on $\partial_cN$ and the  intrinsic hyperbolic metric on $\partial C(N)$.
In particular,
$$ d_{\partial C(N)}(\bar r(z),\bar r(w)) \leq d_{\tau}(z,w) \leq K\ d_{\partial C(N)}(\bar r(z),\bar r(w))+K_0.$$
\end{theorem}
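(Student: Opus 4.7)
The plan is to deduce this from Theorem \ref{projective} by equivariant descent. Both the Thurston metric and the intrinsic hyperbolic metric on $\chb$ are conformally natural, hence $\Gamma$-invariant for any group $\Gamma$ of M\"obius transformations preserving $\Omega=\Omega(\Gamma)$. Moreover, the nearest point retraction $r:\Omega\to\chb$ is defined using only the M\"obius geometry of $\rs$ (it sends a point $z\in\Omega$ to the point of $\chb$ closest to $z$ in the hyperbolic metric on $\Ht$), so $r$ is $\Gamma$-equivariant. Consequently $r$ descends to the map $\bar r:\partial_cN\to\partial C(N)$ between the quotient surfaces, and the induced metrics $d_\tau$ on $\partial_cN$ and $d_{\partial C(N)}$ on $\partial C(N)$ are by definition the quotient pseudometrics
\[
d_\tau(z,w)=\inf_{\gamma\in\Gamma}d_\tau(\tilde z,\gamma\tilde w),\qquad
d_{\partial C(N)}(\bar r(z),\bar r(w))=\inf_{\gamma\in\Gamma}d_{\chb}(r(\tilde z),r(\gamma\tilde w)),
\]
where $\tilde z,\tilde w$ are lifts of $z,w$ to $\Omega$, and we have used $r\circ\gamma=\gamma\circ r$ in the second expression.

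First I would verify the lower (1-Lipschitz) inequality. By Theorem \ref{projective}, for every $\gamma\in\Gamma$,
\[
d_{\chb}(r(\tilde z),r(\gamma\tilde w))\le d_\tau(\tilde z,\gamma\tilde w).
\]
Taking the infimum over $\gamma\in\Gamma$ on both sides yields
$d_{\partial C(N)}(\bar r(z),\bar r(w))\le d_\tau(z,w)$, giving the left-hand inequality.

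Next I would establish the upper quasi-isometric inequality. For any fixed $\gamma\in\Gamma$, Theorem \ref{projective} yields
\[
d_\tau(\tilde z,\gamma\tilde w)\le K\,d_{\chb}(r(\tilde z),r(\gamma\tilde w))+K_0.
\]
The left-hand side dominates $d_\tau(z,w)$ in the quotient, so
\[
d_\tau(z,w)\le K\,d_{\chb}(r(\tilde z),r(\gamma\tilde w))+K_0
\]
for every $\gamma\in\Gamma$; taking the infimum on the right over $\gamma$ produces
$d_\tau(z,w)\le K\,d_{\partial C(N)}(\bar r(z),\bar r(w))+K_0$, as required.

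The only non-formal points to check are the $\Gamma$-invariance of the Thurston metric (which follows from its conformal naturality, recalled in section \ref{metrics}) and that the intrinsic hyperbolic metric on $\partial C(N)$ is indeed the quotient of the intrinsic hyperbolic metric on $\chb$; the latter is a standard consequence of $\Gamma$ acting by isometries on $\chb$ with respect to its intrinsic metric. I do not expect any real obstacle here: once these invariance statements are in hand, the theorem is immediate from Theorem \ref{projective} by passage to the quotient.
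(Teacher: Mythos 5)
Your proposal is correct and matches the paper's approach: the paper derives Theorem \ref{projectivemanifold} from Theorem \ref{projective} in exactly this way, by invoking the $\Gamma$-invariance of the Thurston metric and the $\Gamma$-equivariance of $r$ and passing to quotient metrics. Your write-up simply makes explicit the infimum-over-$\Gamma$ bookkeeping that the paper leaves as immediate.
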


We also obtain a version of Corollary \ref{poincquant}
in the manifold setting. 

\begin{corollary}
\label{poincquantmanifold}
Let $N=\Ht/\Gamma$ be a hyperbolic 3-manifold such that there is a
lower bound $\nu>0$ for the injectivity radius of $\Omega(\Gamma)$
(in the Poincar\'e metric), then
$${1\over 2\sqrt{2}(k+{\pi^2\over 2 \nu})}\ d_{\partial C(N)}(\bar r(z),\bar r(w)) \leq d_{\rho}(z,w) \leq K\ d_{\partial C(N)}(\bar r(z),\bar r(w))+K_0$$
for all $z,w\in\partial_cN$ where $k=4+\log (2+\sqrt{2})$.
\end{corollary}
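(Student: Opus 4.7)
The plan is to descend Corollary~\ref{poincquant} through the action of $\Gamma$, in complete analogy with how Theorem~\ref{projectivemanifold} is deduced from Theorem~\ref{projective}. The hypothesis that the injectivity radius of $\Omega(\Gamma)$ in the Poincar\'e metric is at least $\nu$ is precisely what Corollary~\ref{poincquant} requires, so upstairs we may already assume the chain of inequalities
$$\frac{1}{2\sqrt{2}(k+\pi^2/(2\nu))}\,d_{\chb}(r(\tilde z),r(\tilde w))\le d_\rho(\tilde z,\tilde w)\le K\,d_{\chb}(r(\tilde z),r(\tilde w))+K_0$$
for all $\tilde z,\tilde w\in\Omega(\Gamma)$. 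Since $\Gamma$ acts on $\rs$ by M\"obius transformations preserving $\Omega(\Gamma)$, it acts by isometries on the Poincar\'e metric on $\Omega(\Gamma)$ and on the intrinsic hyperbolic metric on $\chb$; the nearest point retraction is conformally natural, hence $\Gamma$-equivariant, and descends to $\bar r$.

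To obtain the upper bound downstairs I would fix $z,w\in\partial_cN$ together with a single lift $\tilde z\in\Omega(\Gamma)$ of $z$. Given $\epsilon>0$, choose a lift $\tilde w$ of $w$ with $d_{\chb}(r(\tilde z),r(\tilde w))\le d_{\partial C(N)}(\bar r(z),\bar r(w))+\epsilon$; such a lift exists by the very definition of the quotient distance. Applying the upstairs inequality to the pair $(\tilde z,\tilde w)$ gives
$$d_\rho(z,w)\le d_\rho(\tilde z,\tilde w)\le K\,d_{\chb}(r(\tilde z),r(\tilde w))+K_0\le K\bigl(d_{\partial C(N)}(\bar r(z),\bar r(w))+\epsilon\bigr)+K_0,$$
and letting $\epsilon\to 0$ completes this half.

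For the lower bound, which amounts to saying that $\bar r$ is Lipschitz with constant $2\sqrt{2}(k+\pi^2/(2\nu))$, I would use a standard path-lifting argument. Given a rectifiable path $\alpha$ in $\partial_cN$ from $z$ to $w$, lift it through the Riemannian covering $\Omega(\Gamma)\to\partial_cN$ (starting at any lift of $z$), push it forward by $r$, which multiplies length by at most $K(\nu)=2\sqrt{2}(k+\pi^2/(2\nu))$ by the Lipschitz clause of Corollary~\ref{poincquant}, and then project it down to $\partial C(N)$ along the isometric $\Gamma$-action, which cannot increase length. Taking the infimum over such $\alpha$ yields $d_{\partial C(N)}(\bar r(z),\bar r(w))\le K(\nu)\,d_\rho(z,w)$. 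There is essentially no obstacle here: the entire content is already packaged in Corollary~\ref{poincquant}, and the remainder is a bookkeeping argument based on the conformal naturality of $r$ and the isometric character of the $\Gamma$-actions on both sides.
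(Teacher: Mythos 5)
Your proposal is correct and is essentially the paper's argument: the paper deduces the manifold version directly from Corollary \ref{poincquant} by noting that the Poincar\'e metric, the intrinsic metric on $\chb$, and the nearest point retraction are all $\Gamma$-invariant/equivariant, so the inequalities descend to the quotient. Your explicit handling of the two directions (choosing a nearly optimal lift for the upper bound, and lifting paths for the Lipschitz lower bound) is just a careful spelling-out of that descent.
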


We also obtain a version of  Corollary \ref{upqc} which guarantees
the existence of a quasiconformal map from the conformal boundary
to the boundary of the convex core. 

\begin{corollary}
\label{qcmanifold}
If $N=\Ht/\Gamma$ is a hyperbolic 3-manifold and
$\nu>0$ is a lower bound for the injectivity radius of $\Omega(\Gamma)$
(in the Poincar\'e metric), then there is a
$M(\nu)$-quasiconformal map $\phi:\partial_cN\to\partial C(N)$ which admits
a bounded homotopy
to the nearest point retraction $\bar r$.
\end{corollary}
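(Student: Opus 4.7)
The plan is to apply Corollary \ref{upqc} to the hyperbolic domain $\Omega(\Gamma)\subset\rs$ and descend both the resulting quasiconformal map and its bounded homotopy to the quotient using conformal naturality. The hypothesis that $\Omega(\Gamma)$ has injectivity radius at least $\nu$ in its Poincar\'e metric is exactly the condition that $\Omega(\Gamma)$ is a uniformly perfect domain with that lower bound on injectivity radius, so Corollary \ref{upqc} applies and yields a conformally natural $M(\nu)$-quasiconformal map $\phi:\Omega(\Gamma)\to\chb$ together with a bounded homotopy $H$ from $\phi$ to the nearest point retraction $r$.

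Next I would use conformal naturality to descend. Since $\Gamma$ is a group of M\"obius transformations preserving $\Omega(\Gamma)$, conformal naturality means $\phi\circ\gamma=\gamma\circ\phi$ for every $\gamma\in\Gamma$, so $\phi$ factors through a well-defined map $\bar\phi:\partial_cN=\Omega(\Gamma)/\Gamma\to\partial C(N)=\chb/\Gamma$. The Poincar\'e metric on $\Omega(\Gamma)$ and the intrinsic hyperbolic metric on $\chb$ are both $\Gamma$-invariant, so the pointwise dilatation of $\bar\phi$ at any point agrees with that of $\phi$ at any preimage. Hence $\bar\phi$ is $M(\nu)$-quasiconformal.

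For the homotopy I would argue similarly: the bounded homotopy $H$ produced in Corollary \ref{upqc}, which is constructed in Section \ref{thelift} via the Douady-Earle extension together with the conformally natural map $\phi$ and the (automatically conformally natural) nearest point retraction $r$, is itself $\Gamma$-equivariant. Consequently $H$ descends to a homotopy $\bar H$ between $\bar\phi$ and $\bar r$; since the individual tracks of $H$ have length bounded above independently of the starting point and the target metric is $\Gamma$-invariant, the tracks of $\bar H$ are uniformly bounded as well, so $\bar H$ is a bounded homotopy.

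The main and essentially only obstacle is verifying that the homotopy in Corollary \ref{upqc} is itself conformally natural, not merely its endpoint $\phi$. This is built into the Douady-Earle-based construction because both $\phi$ and $r$ are conformally natural and the straight-line-style interpolation between them in the appropriate bordification respects the action of M\"obius transformations, but it deserves to be spelled out rather than assumed in passing when deducing the manifold version.
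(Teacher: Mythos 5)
Your proposal is correct and is essentially the paper's argument: the paper deduces Corollary \ref{qcmanifold} directly from Corollary \ref{upqc} by observing that the Douady--Earle construction produces a conformally natural (hence $\Gamma$-equivariant) map and an equivariant bounded straight-line homotopy to the $\Gamma$-equivariant nearest point retraction, so everything descends to the quotient $\partial_cN\to\partial C(N)$ with the same dilatation and homotopy bounds. Your explicit attention to the equivariance of the homotopy itself, not just of $\phi$, is exactly the point the paper leaves implicit.
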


If $N$ is a hyperbolic 3-manifold with finitely generated fundamental group,
then its domain of discontinuity is uniformly perfect (see Pommerenke \cite{pommerenke}), so Corollaries \ref{poincquantmanifold} and \ref{qcmanifold}
immediately apply to all hyperbolic 3-manifolds with finitely generated fundamental
group. More generally, analytically finite hyperbolic 3-manifolds
(i.e. those whose conformal boundary has finite area) are known to
be uniformly perfect \cite{canary91}.

We recall that it was earlier shown
(see \cite{BC03}) that the nearest point retraction $\bar r:\partial_cN\to \partial C(N)$ is $(2\sqrt{2}(k+{\pi^2\over 2 \nu}))$-Lipschitz  whenever
$N$ is analytically finite and has a $J(\nu)$-Lipschitz homotopy inverse where
$J(\nu)=O(e^{C\over \nu})$ (for some explicit constant $C>0$) as $\nu\to 0$.
Moreover, if $N=\Ht/\Gamma$ is analytically finite and $\Omega(\Gamma)$ is simply connected,
then $\bar r$ has a $(1+\frac{\pi}{\sinh^{-1}(1)})$-Lipschitz homotopy inverse.
Remark (2) in Section 7 of \cite{BC03} indicates that one cannot improve substantially
on the asymptotic form of the Lipschitz constant for $\bar r$ (see also Section 6 of \cite{cbbc}),
while Proposition 6.2 of \cite{BC03} shows that one cannot improve substantially on the
asymptotic form of the Lipschitz constant of the homotopy inverse for $\bar r$.

\bigskip\noindent{\bf Outline of paper:}
Many of the techniques of proof in this paper are similar
to those in our previous papers \cite{bridgeman} and \cite{BC03}. The consideration
of the Thurston metric allows us to obtain more general results, while
the fact that the Thurston metric behaves well under approximation allows us
to reduce many of our proofs to the consideration of the case where $\Omega$ is
the complement of finitely many points in $\rs$ and the convex hull of its
complement is a finite-sided ideal polyhedron. Therefore, our techniques are more
elementary than much of the earlier work on the subject. 

In Section 2 we recall basic facts and terminology concerning the nearest point
retraction and the boundary of the convex core. In Section 3 we discuss the
Poincar\'e, quasihyperbolic and Thurston metrics on a hyperbolic domain.
In section 4 we establish the key approximation properties which allow us
to reduce the proof of Theorems \ref{projective} and \ref{nprlift} to the case where
$\rs-\Omega$ is a finite collection of points. In Section 5 we study the
geometry of the finitely punctured case and introduce a notion of intersection
number which records the total exterior dihedral angle along a geodesic
in $\chb$. If $\alpha$ is an arc in $\chb$, then the difference between its length
and the length of $r^{-1}(\alpha)$ in the Thurston metric is exactly this
intersection number, see Lemma \ref{finiteptoh}.

Section 6 contains the key intersection number estimates used in the proof of
Theorem \ref{projective}. The first estimate, Lemma \ref{thickintbound}, bounds the
total intersection numbers of moderate length geodesic arcs in the thick part of $\chb$.
(Lemma \ref{thickintbound} is a mild generalization of Lemma 4.3 in \cite{BC03} and
we defer its proof to an appendix.) We then bound the total intersection number of
a short geodesic (Lemma \ref{shortgeointbound}) and the angle of intersection
of an edge of $\chb$ with a short geodesic (Lemma \ref{anglebound}). We give a quick
proof of a weaker form of Lemma \ref{shortgeointbound} and defer the proof of
the better estimate to the appendix. In a remark at the end of Section 6, we see
that one could use this weaker estimate to obtain a version of Theorem \ref{projective}
with slightly worse constants.

Section 7 contains the proof of Theorem \ref{projective}. It follows immediately from Lemma
\ref{finiteptoh} that the nearest point retraction is 1-Lipschitz with respect to the
Thurston metric, but it remains
to bound the distance between two points $x$ and $y$ in $\Omega$ in terms of
the distance between $r(x)$ and $r(y)$ in $\chb$. We do so by first
dividing the shortest geodesic joining $r(x)$ to $r(y)$  into segments of length
roughly $G$ (for some specific constant $G$). The segments in the thick part
have bounded intersection number by Lemma \ref{thickintbound}. We use
Lemmas \ref{shortgeointbound} and \ref{anglebound} to replace geodesic
segments in the thin part by bounded length arcs which consist of one segment
missing every edge and another segment with bounded intersection number.
We then obtain a path joining $r(x)$ to $r(y)$ with length and intersection number
bounded above
by a multiple of \hbox{$d_{\chb}(r(x),r(y))+G$}. Lemma \ref{finiteptoh} can then be applied
to complete the proof.

In Section 8 we derive consequences of Theorem \ref{projective},
including Corollaries \ref{poincquant}, \ref{qiinsc} and  \ref{MMconj}. In Section 9
we prove Theorem \ref{nprlift}, using techniques similar to those in the proof of 
Theorem \ref{projective}. We then use work of Douady and Earle \cite{douady-earle} to derive
Corollary \ref{upqc}. In Section 10, we study the special case of round annuli
and observe that the asymptotic form of our estimates in Corollary
\ref{poincquant} and Theorem \ref{nprlift} cannot be substantially improved.
The appendix, Section 11, contains the complete proofs of Lemmas \ref{thickintbound}
and \ref{shortgeointbound}. The proof of Lemma \ref{thickintbound} is considerably
simpler than the proof of Lemma 4.3 in \cite{BC03} as we are working in the
finitely punctured case.

\section{The nearest point retraction and convex hulls}

If $\Omega \subseteq \rs$ is a hyperbolic domain,  we let  \hbox{$CH(\rs-\Omega)$} be the  hyperbolic convex hull of \hbox{$\rs-\Omega$} in $\Ht$.  
We then define 
$$\chb = \partial CH(\rs-\Omega)$$
and recall that it is hyperbolic in its intrinsic metric (\cite{EM87,ThBook}). In the special case that \hbox{$\rs-\Omega$} lies in a round
circle, \hbox{$CH(\rs-\Omega)$} is a subset of a totally geodesic plane in $\Ht$. In this case,
it is natural to consider $\chb$ to be the double of \hbox{$CH(\rs-\Omega)$} along
its boundary (as a surface in the plane).

We define the {\em nearest point retraction} 
$$r:\Omega\to\chb,$$
by letting $r(z)$ be the point of intersection of the smallest horosphere
about $z$ which intersect $\chb$. We note that \hbox{$r:\Omega\to\chb$} is
a conformally natural homotopy equivalence.
The nearest point retraction extends continuously to a conformally natural map
$$r:\Ht\cup\Omega\to CH(\rs-\Omega)$$ where $r(x)$
is the (unique) nearest point on \hbox{$CH(\rs-\Omega)$} if $x\in \Ht$ (see \cite{EM87}).
  
We recall that a {\em support plane} to $\chb$ is a plane $P$ which intersects $\chb$ and
bounds an open half-space $H_P$ disjoint from $\chb$.
The half-space $H_P$ is associated to a maximal open disc $D_P$
in $\Omega$. Since $D_P$ is maximal,  $\partial D_P$ contains at least two points
in $\partial \Omega$. Therefore, $P$ contains a geodesic $g \in \chb$. 
Notice that  $P$ intersects $\chb$ in either a single edge or a convex subset of $P$
which we call a {\em face} of $\chb$. If $z\in \Omega$, then the geodesic ray
starting at $r(z)$ in the direction of $z$ is perpendicular to a support plane $P_z$
through $r(z)$.

\section{The Poincar\'e metric, the quasihyperbolic metric and the Thurston metric}
\label{metrics}

Given a hyperbolic domain $\Omega$ in $\rs$  there is a unique metric 
$\rho$ of constant curvature $-1$ which is conformally equivalent to the 
Euclidean metric on $\Omega$. 
This metric $\rho$ is called the {\em Poincar\'e metric} on $\Omega$ and  is
denoted 
$$\rho(z) = \lambda_\rho(z)\ dz.$$
Alternatively, the Poincar\'e metric can be given by defining the length of a tangent
vector \hbox{$v \in T_x(\Omega)$} to be the infimum of the hyperbolic length over  all vectors
$v'$ such that there exists a conformal map \hbox{$f:\Hp \rightarrow \rs$} such that
\hbox{$f(\Hp)\subset \Omega$} and  \hbox{$df(v') = v$.}

In \cite{BP}, Beardon and Pommerenke show that the quasihyperbolic metric and the Poincar\'e metric are closely related. We recall that,
the {\em quasihyperbolic  metric} on $\Omega\subset \mathbb{C}$ is the conformal metric 
$$q(z) = \frac{1}{\delta(z)}\ dz$$
where $\delta(z)$ is the Euclidean distance from $z$ to $\partial \Omega$. 
They let 
$$\beta(z)=\inf\left\{\left|\log {|z-a|\over |z-b|}\right|\ : a\in\partial\Omega,\ b\in\partial\Omega,\ |z-a|=\delta(z)\right\}$$
Alternatively, one can define \hbox{$\beta(z)=\pi M$}  where $M$ is the maximal
modulus of an essential round annulus  (i.e. one which separates \hbox{$\rs-\Omega$}) in $\Omega$
whose central (from the conformal viewpoint) circle passes through $z$. 
We recall that if $A$ is any annulus in $\rs$, then it is conformal to an annulus
of the form \hbox{$\{ z\ |\ {1\over t}< |z| <t\}$} and we define its {\em modulus} to be
$${\rm mod}(A)={1\over 2\pi}\log(t^2).$$
The central circle of $A$ is the pre-image of
the unit circle in $\{ z\ |\ {1\over t}< |z| <t\}$.

\begin{theorem}{\rm (Beardon-Pommerenke \cite{BP})}
\label{BPresult}
If $\Omega$ is a hyperbolic domain in $\mathbb{C}$, then
$${1\over \sqrt{2}(k+\beta(z))}q(z)\le \rho(z) \le {2k+{\pi\over 2}\over k+\beta(z)}q(z)$$
for all $z\in\Omega$, where
$k=4+\log(3+2\sqrt{2})\approx 5.76$.
Moreover,
$$\rho(z)\le 2q(z)$$
for all $z\in \Omega$.
If $\Omega$ is simply connected, then
$${1\over 2} q(z) \le \rho(z) \le 2q(z).$$
\end{theorem}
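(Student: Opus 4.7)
The plan is to prove the three inequalities separately, using two classical tools throughout: the Schwarz--Pick lemma, which implies that if $U\subset V$ are hyperbolic domains then $\rho_V\le\rho_U$ as conformal densities on $U$, and the Koebe $1/4$ theorem for univalent functions on the disk. The alternative characterization $\beta(z)=\pi M(z)$ given in the statement, where $M(z)$ is the maximal modulus of an essential round annulus in $\Omega$ whose central circle passes through $z$, will be the bridge between $\beta(z)$ and the Poincar\'e metric.

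First I would establish the universal upper bound $\rho(z)\le 2q(z)$. Since the Euclidean disk $D(z,\delta(z))$ embeds in $\Omega$ and its Poincar\'e density at the centre equals $2/\delta(z)$, Schwarz--Pick yields $\rho(z)\le 2/\delta(z)=2q(z)$. The simply connected lower bound then comes from Koebe: for a Riemann map $f\colon\mathbb{D}\to\Omega$ with $f(0)=z$ one has $\rho(z)=2/|f'(0)|$, while Koebe's theorem gives $\delta(z)\ge|f'(0)|/4$, so $q(z)\le 2\rho(z)$.

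The two-sided estimate involving $\beta(z)$ is the heart of the theorem. For the upper bound $\rho(z)\le (2k+\pi/2)q(z)/(k+\beta(z))$, I would select an essential round annulus $A\subset\Omega$ of near-maximal modulus $M$ whose central circle passes through $z$. An explicit calculation using the universal cover of $A$ by a Euclidean strip shows that the Poincar\'e density on $A$ at points of the central circle equals $\pi/(2M)$ times a Euclidean factor of order $1/\delta(z)$; Schwarz--Pick applied to $A\hookrightarrow\Omega$ therefore gives $\rho(z)\lesssim q(z)/\beta(z)$ for large $\beta(z)$, and interpolating with the universal bound $\rho\le 2q$ for small $\beta(z)$ produces the coefficient $(2k+\pi/2)/(k+\beta(z))$. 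For the lower bound $\rho(z)\ge q(z)/(\sqrt{2}(k+\beta(z)))$, take $a,b\in\partial\Omega$ realizing the infimum defining $\beta(z)$, with $|z-a|=\delta(z)$. The inclusion $\Omega\subset\mathbb{C}\setminus\{a,b\}$ and Schwarz--Pick give $\rho_\Omega(z)\ge\rho_{\mathbb{C}\setminus\{a,b\}}(z)$, and after a M\"obius normalization sending $(a,b)$ to $(0,1)$, classical estimates of Hempel--Landau type for the Poincar\'e density on the twice-punctured plane furnish an explicit lower bound $\rho_{\mathbb{C}\setminus\{0,1\}}(w)\gtrsim 1/(|w|(|\log|w||+k))$, which under the normalization translates into the desired lower bound since $|\log|w||$ corresponds to $\log(|z-a|/|z-b|)$ and the infimum of such logarithms over admissible $b$ is precisely $\beta(z)$.

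The main obstacle is obtaining the \emph{sharp} numerical constant $k=4+\log(3+2\sqrt{2})$ and having the same value work uniformly in both the small-$\beta$ regime (where a Koebe-type bound dominates) and the large-$\beta$ regime (where the annular modulus dominates). Matching the interpolation cleanly requires explicit, non-asymptotic estimates on the Poincar\'e density of the twice-punctured plane; the form $3+2\sqrt{2}=(1+\sqrt{2})^2$ strongly suggests that the cross-over scale is calibrated by an explicit Gr\"otzsch-type extremal annulus, and forcing the numerical coefficients in both directions to line up with a single $k$ is the delicate content of Beardon--Pommerenke's original argument.
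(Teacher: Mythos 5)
This is a quoted result: the paper gives no proof of Theorem \ref{BPresult} at all, citing Beardon--Pommerenke \cite{BP}, so there is nothing internal to compare your argument against. Judged on its own terms, your outline does follow the architecture of the original proof: the bound $\rho(z)\le 2q(z)$ via the inscribed disk $D(z,\delta(z))$ and monotonicity of the Poincar\'e density is complete and correct, as is the simply connected lower bound $\rho\ge \tfrac12 q$ from the Koebe $1/4$ theorem applied to a Riemann map; and the strategy for the $\beta$-dependent bounds (Schwarz--Pick against a large essential annulus for the upper bound, Schwarz--Pick against $\mathbb{C}\setminus\{a,b\}$ plus a twice-punctured-plane density estimate for the lower bound) is exactly Beardon and Pommerenke's.

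The genuine gap is that the step you describe as ``classical estimates of Hempel--Landau type'' is not an input to Beardon--Pommerenke's theorem but its core content: the explicit inequality for $\lambda_{\mathbb{C}\setminus\{0,1\}}$ with the specific constant $k=4+\log(3+2\sqrt{2})$ is precisely what \cite{BP} proves (via Landau-type estimates on the elliptic modular covering), and Hempel's sharp version postdates it. Invoking it as known makes the argument circular as a proof of this theorem. Two further points are passed over silently: (i) the equivalence of the two definitions of $\beta(z)$ (the log-ratio infimum versus $\pi$ times the maximal modulus of an essential round annulus through $z$) is itself a lemma, and the maximal annulus need not be centered at $z$, so relating its central-circle density to $1/\delta(z)$ takes an argument; (ii) in the lower bound, the M\"obius normalization sends $z$ to $w=(z-a)/(b-a)$, so $|\log|w||=|\log(\delta(z)/|b-a|)|$ rather than $|\log(|z-a|/|z-b|)|$, and the triangle-inequality bookkeeping converting one into the other is where the factor $\sqrt{2}$ in the final constant comes from. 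Your closing paragraph correctly identifies that calibrating a single $k$ across both regimes is the delicate part, but the proposal does not supply it.
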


Using the fact that the core curves of large modulus essential annuli are short
in the Poincar\'e metric, one sees that a lower bound on the injectivity
radius of $\Omega$ gives an explicit bilipschitz equivalence between
the Poincar\'e metric and the quasihyperbolic metric when $\Omega$ is uniformly perfect.

\begin{corollary}
\label{poincandqh}
If $\Omega$ is a uniformly perfect hyperbolic domain and $\nu>0$ is
a lower bound for the injectivity radius of $\Omega$ in the Poincar\'e
metric, then 
$${1\over \sqrt{2}(k+{\pi^2\over 2\nu})}q(z)\le  \rho(z)\le 2q(z)$$
for all $z\in\Omega$.
\end{corollary}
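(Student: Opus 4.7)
The plan is to combine Theorem~\ref{BPresult} with a standard comparison between moduli of essential annuli and the Poincar\'e injectivity radius. The upper bound $\rho(z)\le 2q(z)$ is simply quoted from Beardon-Pommerenke, so the real content is the lower bound, which by the first half of Theorem~\ref{BPresult} reduces to showing that
$$\beta(z)\ \le\ \frac{\pi^2}{2\nu}\qquad\text{for all } z\in\Omega.$$
Once this is established, substitution into ${1\over \sqrt{2}(k+\beta(z))}q(z)\le \rho(z)$ gives the stated inequality.

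To bound $\beta(z)$, I would use the alternative characterization recalled in the text, namely $\beta(z)=\pi M(z)$, where $M(z)$ is the supremum of moduli of essential round annuli in $\Omega$ whose central circle passes through $z$. Fix such an annulus $A\subset\Omega$ of modulus $M$ close to $M(z)$. Because $A$ is conformally the round annulus $\{1/t<|w|<t\}$ with $\log t=\pi M$, its intrinsic Poincar\'e metric has a core geodesic (the central circle) of hyperbolic length exactly $\pi/M$. The inclusion $A\hookrightarrow\Omega$ is holomorphic, so the Schwarz-Pick lemma gives $\rho_\Omega\le \rho_A$ pointwise; hence the central circle of $A$, viewed as a loop in $\Omega$, has Poincar\'e length at most $\pi/M$.

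Now invoke the uniform perfectness hypothesis: since $A$ is essential in $\Omega$, its central circle is a homotopically non-trivial loop, and the assumption $\mathrm{inj}_\Omega\ge\nu$ forces every such loop to have Poincar\'e length at least $2\nu$. Therefore
$$2\nu\ \le\ \pi/M,\qquad\text{so}\qquad M\ \le\ \pi/(2\nu).$$
Taking the supremum over essential annuli through $z$ gives $\beta(z)=\pi M(z)\le \pi^{2}/(2\nu)$, and plugging into Theorem~\ref{BPresult} completes the proof.

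The only delicate point is the transition from ``injectivity radius bounded below by $\nu$'' to ``every essential loop has length at least $2\nu$''; this is standard but deserves an explicit sentence, as one must note that the central circle is simple and essential so it represents a non-trivial free homotopy class and is therefore longer than the closed geodesic in that class, whose length is in turn at least $2\nu$. Apart from this step and the Schwarz-Pick comparison, the argument is a purely formal substitution into the Beardon-Pommerenke estimate.
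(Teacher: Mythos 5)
Your argument is correct, and its overall shape --- bound $\beta(z)$ above by $\pi^2/(2\nu)$ and substitute into Theorem~\ref{BPresult} --- is exactly the paper's. The only difference is in how that bound is obtained: the paper simply quotes Corollary~3.3 of \cite{cbbc}, which asserts that $\beta(z)\ge M$ implies ${\rm inj}_\Omega(z)\le \pi^2/2M$, whereas you reprove this comparison from scratch using the annulus characterization $\beta(z)=\pi M(z)$, the fact that a round annulus of modulus $M$ carries a core geodesic of length $\pi/M$, Schwarz--Pick monotonicity of the Poincar\'e metric under inclusion, and the observation that an essential loop through $x$ has Poincar\'e length at least $2\,{\rm inj}_\Omega(x)$. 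This is essentially the content of the cited corollary, so your version buys self-containedness at the price of spelling out these standard facts; both routes are valid. One small remark: your closing caveat about comparing with the closed geodesic in the free homotopy class is an unnecessary detour (and would force you to rule out parabolic classes, where no geodesic representative exists) --- the direct argument is cleaner: a non-trivial loop through $x$ of length less than $2\,{\rm inj}_\Omega(x)$ would lie in the embedded metric ball of radius ${\rm inj}_\Omega(x)$ about $x$ and hence be null-homotopic, so every essential loop has length at least $2\nu$.
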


\begin{proof}
Corollary 3.3 in \cite{cbbc} asserts that if 
\hbox{$\beta(z)\ge M$}, then \hbox{${\rm inj}_\Omega(z)\le {\pi^2\over 2M}$.}
Therefore, if \hbox{${\rm inj}_\Omega(z)\ge \nu>0$},
then \hbox{$\beta(z)\le {\pi^2\over 2\nu}$}. The result then follows from
Theorem \ref{BPresult}.
\end{proof}

\bigskip

Another metric closely related to the quasihyperbolic metric is the Thurston metric.
The {\em Thurston metric} 
$$\tau(z)=\lambda_\tau(z)\ dz$$
on $\Omega$ is defined by letting the length of a vector
\hbox{$v \in T_{x}(\Omega)$} be the infimum of the hyperbolic length of all vectors
\hbox{$v' \in \Hp$} such that there exists a M\"obius transformation 
$f$ such that \hbox{$f(\Hp)\subset \Omega$} and \hbox{$df(v') = v$}. By definition, the Thurston
metric is conformally natural and conformal to the Euclidean metric.
Furthermore, by our alternate 
definition of the Poincar\'e metric, we have the inequality 
\begin{equation}
\rho(z) \leq \tau(z)
\label{rlesst}
\end{equation}
for all $z\in\Omega$.

The Thurston metric is also known as the projective
metric, as it arises from regarding $\Omega$ as a complex projective surface
and giving it the metric Thurston described on such surfaces. 
Kulkarni and Pinkall defined and studied an
analogue of this metric in all dimensions and it is also sometimes called the 
Kulkarni-Pinkall metric. See Herron-Ma-Minda \cite{HMM05}, Kulkarni-Pinkall \cite{KP},
McMullen \cite{mcmullenCE} and Tanigawa \cite{tanigawa} for further information
on the Thurston metric. 

Kulkarni and Pinkall (see Theorem 7.2 in \cite{KP}) proved that the Thurston metric and
the quasihyperbolic metric are 2-bilipschitz.

\begin{theorem}{\rm (\cite{KP})}
\label{projtoqh}
If $\Omega$ is a hyperbolic domain in $\mathbb{C}$ then
$$ {1\over 2} \tau(z) \leq q(z) \leq \tau(z).$$
\end{theorem}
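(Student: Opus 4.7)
The plan is to read off both inequalities directly from the variational definition of $\tau$, using the classical formula for the hyperbolic metric on a Euclidean disc. Since $\Omega\subset\mathbb{C}$, so $\infty\notin\Omega$, every round disc $D$ with $D\subset\Omega$ is a genuine Euclidean disc; if $D$ has center $c$ and radius $R$, its hyperbolic metric is
$$\lambda_D(w)=\frac{2R}{R^2-|w-c|^2}.$$
A M\"obius image $f(\Hp)$ contained in $\Omega$ is precisely such a disc, and $df$ pushes the hyperbolic metric on $\Hp$ forward to $\lambda_D|dw|$. So the Thurston density at $z$ is
$$\lambda_\tau(z)=\inf\{\lambda_D(z):z\in D\subset\Omega,\ D\text{ round}\}.$$

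For the upper bound $\tfrac{1}{2}\tau(z)\le q(z)$, equivalently $\lambda_\tau(z)\le 2/\delta(z)$, I would simply plug in the open Euclidean disc $D_0$ of radius $\delta(z)$ centered at $z$, which lies in $\Omega$ by the definition of $\delta$. Its hyperbolic density at the center is $\lambda_{D_0}(z)=2/\delta(z)$, so taking the infimum yields the bound at once.

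For the lower bound $q(z)\le \tau(z)$, it suffices to prove $\lambda_D(z)\ge 1/\delta(z)$ for every admissible disc $D$ with center $c$ and radius $R$. Pick $p\in\partial\Omega$ realizing $|z-p|=\delta(z)$. Since $D\subset\Omega$ we have $p\notin D$, hence $|c-p|\ge R$, and the triangle inequality gives $\delta(z)\ge R-|z-c|$. Then
$$R^2-|z-c|^2=(R-|z-c|)(R+|z-c|)\le \delta(z)(R+|z-c|)\le 2R\,\delta(z),$$
which rearranges exactly to $\lambda_D(z)\ge 1/\delta(z)$. Taking the infimum over $D$ finishes the proof. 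There is no serious obstacle here; the only points worth noting are that enlarging an admissible disc only decreases $\lambda_D(z)$ (so one may restrict to maximal discs if convenient), and that the hypothesis $\Omega\subset\mathbb{C}$ forces every admissible disc to be Euclidean, so the single formula for $\lambda_D$ applies throughout.
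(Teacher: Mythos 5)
Your argument is essentially correct, and it is worth noting that the paper does not actually prove this statement: it is quoted from Kulkarni--Pinkall (Theorem 7.2 of \cite{KP}), with only the remark after Lemma \ref{projhoro} that it can be derived from the horoball characterization $\tau(z)=\frac{1}{h(z)}\,dz$. That route would compare the radius $h(z)$ of the maximal horoball at $z$ whose interior misses $CH(\rs-\Omega)$ with the Euclidean distance $\delta(z)$; your route stays entirely in the plane and reads both inequalities off the variational definition of $\tau$, which is more elementary and self-contained. Both of your computations check out: the maximal disc centered at $z$ has density $2/\delta(z)$ at its center, giving $\tau(z)\le 2q(z)$, and for an arbitrary admissible Euclidean disc the factorization $R^2-|z-c|^2=(R-|z-c|)(R+|z-c|)\le 2R\,\delta(z)$ gives $\lambda_D(z)\ge 1/\delta(z)$.

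The one inaccuracy is the claim that $\Omega\subset\mathbb{C}$ forces every admissible round disc to be a Euclidean disc. A M\"obius image of $\Hp$ whose boundary circle passes through $\infty$ is an open half-plane; it does not contain $\infty$ and may well be contained in $\Omega$ (for instance $\Omega=\mathbb{C}\setminus\{0,1\}$ contains $\{z:{\rm Im}(z)>2\}$). So the lower bound must also be verified for half-planes $H$, and this is immediate: if $d$ is the distance from $z$ to the line $\partial H$, then $\lambda_H(z)=1/d$, while every point of $\partial\Omega$ lies outside $H$ and hence at distance at least $d$ from $z$, so $\delta(z)\ge d$ and $\lambda_H(z)\ge 1/\delta(z)$. (Alternatively, one can exhaust $H$ by internally tangent Euclidean discs and pass to the limit, so including half-planes does not change the infimum.) With that case added, your proof is complete.
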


In the simply connected setting, Herron, Ma and Minda
(see Theorem 2.1 and Lemma 3.1 in \cite{HMM05}) prove
that the Poincar\'e metric and the Thurston metric are also 2-bilipschitz.

\begin{theorem}{\rm (\cite{HMM05})}
\label{scThurPoin}
If  $\Omega$ is a simply connected hyperbolic domain in $\rs$,
then 
$$ {1 \over 2}\tau(z) \leq \rho(z) \leq \tau(z).$$
\end{theorem}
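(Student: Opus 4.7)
The right inequality $\rho(z) \le \tau(z)$ is immediate from (\ref{rlesst}): the Thurston density is defined by an infimum of hyperbolic densities restricted to M\"obius liftings $\mathbb{H}^2\to\Omega$, while the Poincar\'e density takes the infimum over the strictly larger class of all conformal liftings, so the Poincar\'e density is at most the Thurston density.

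For the nontrivial inequality $\tau(z) \le 2\rho(z)$, my plan is to exhibit, for each $z \in \Omega$, a round disc $D \subset \Omega$ in $\widehat{\mathbb{C}}$ containing $z$ with $\lambda_D(z) \le 2\rho(z)$; then $\tau(z) \le \lambda_D(z) \le 2\rho(z)$ follows from the definition of $\tau$ as an infimum over round discs in $\Omega$ through $z$. Normalize by a M\"obius transformation so that $\infty \in \partial\Omega$, and let $f:\mathbb{D}\to\Omega$ be the Riemann map with $f(0)=z$, so that $\rho(z)=2/|f'(0)|$. Let $a\in\partial\Omega$ realize the Euclidean distance $\delta=\mathrm{dist}(z,\partial\Omega)=|z-a|$. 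Koebe's one-quarter theorem applied to $(f-z)/f'(0)$ gives $\delta\ge|f'(0)|/4$, equivalently $\rho(z)\ge 1/(2\delta)$. The natural candidate $D$ is the half-plane $H_a$ bounded by the line through $a$ perpendicular to $za$ on the side containing $z$: it has $\lambda_{H_a}(z)=1/\delta$, hence $\lambda_{H_a}(z)\le 2\rho(z)$. When $H_a\subset\Omega$ we take $D=H_a$ and are done.

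In general $H_a$ need not be contained in $\Omega$, and the main obstacle is to replace it with an admissible round disc of comparable density. My plan is to use a continuity argument: deform $H_a$ through a one-parameter family of round discs through $z$, each tangent to $\partial\Omega$ at some accessible boundary point, and select the one minimizing $\lambda_D(z)$. The simply connected hypothesis, equivalent to the connectedness of $\widehat{\mathbb{C}}\setminus\Omega$, is essential here: it prevents $\partial\Omega$ from wrapping too tightly around $z$ and should force the optimal such disc to satisfy $\lambda_D(z)\le 2\rho(z)$. Without this topological input, the naive one-parameter family of Euclidean discs tangent at $a$ along the ray $za$ yields only $\lambda_D(z)\le 4\rho(z)$, so one genuinely needs to optimize over the tangent direction. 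Sharpness of the constant $2$ is witnessed by the Koebe domain $\Omega=\mathbb{C}\setminus(-\infty,-1/4]$ at $z=0$: the Koebe function $w\mapsto w/(1-w)^2$ gives $\rho(0)=2$, while the half-plane $\{\mathrm{Re}(w)>-1/4\}\subset\Omega$ has density $4$ at $0$, producing $\tau(0)=4=2\rho(0)$.
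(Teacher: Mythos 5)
First, a point of reference: the paper does not prove this statement at all --- Theorem \ref{scThurPoin} is quoted from Herron--Ma--Minda \cite{HMM05} (their Theorem 2.1 and Lemma 3.1) --- so there is no internal proof to compare against, and your argument must stand on its own. Your proof of $\rho(z)\le\tau(z)$ is correct and is exactly the paper's inequality (\ref{rlesst}): the Poincar\'e density is an infimum over a strictly larger family of maps than the M\"obius family defining $\tau$. Your sharpness example (the Koebe domain, where $\tau(0)=4=2\rho(0)$) also checks out.

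The proof of $\tau(z)\le 2\rho(z)$, however, has a genuine gap, and it sits exactly where the entire content of the theorem lies. Since $\tau(z)$ is by definition the infimum of $\lambda_D(z)$ over round discs $z\in D\subset\Omega$, the assertion ``there exists a round disc $D\subset\Omega$ through $z$ with $\lambda_D(z)\le 2\rho(z)$'' \emph{is} the inequality to be proved; your ``continuity argument'' --- deform $H_a$ through discs tangent to $\partial\Omega$ and claim that simple connectivity ``should force'' the optimal one to have density at most $2\rho(z)$ --- is a restatement of the goal, not an argument. Nothing in your setup exploits simple connectivity beyond Koebe's one-quarter theorem, and, as you yourself compute, Koebe together with the obvious admissible disc (the Euclidean disc $B(z,\delta)$, or any disc internally tangent at the nearest boundary point $a$) yields only $\tau(z)\le 2q(z)\le 4\rho(z)$ --- a bound the paper already gets for free by combining Theorem \ref{projtoqh} with Theorem \ref{BPresult}. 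Improving the constant from $4$ to the sharp value $2$ requires a genuinely new input (in \cite{HMM05} it comes from analyzing the Kulkarni--Pinkall maximal disc at $z$ together with distortion estimates for univalent functions), and that input is absent here. To close the gap you would need to actually produce the disc: for instance, show that if $f:\mathbb{D}\to\Omega$ is the Riemann map with $f(0)=z$, then $f(\mathbb{D})$ contains a round disc through $z$ whose conformal radius seen from $z$ is at least $|f'(0)|/2$ --- and that statement is precisely the nontrivial half of the theorem.
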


We will make use of the following alternate characterization of the Thurston metric.
We note that it is not difficult to derive Theorem \ref{projtoqh} from this
characterization.

\begin{lemma}
\label{projhoro}
If $\Omega$ is a hyperbolic domain in $\mathbb{C}$, then
the Thurston metric $\tau(z)$ on $\Omega$ is given by
$$\tau(z)= \frac{1}{h(z)}\ dz$$
where $h(z)$ is the radius of the maximal horoball $B_z$ based at $z$
whose interior misses $\chb$. 
\end{lemma}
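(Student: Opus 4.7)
The plan is to unpack the definition of the Thurston metric and match the resulting infimum directly with the horoball description. By the M\"obius formulation, for any $v \in T_z\Omega$ one has
$$\lambda_\tau(z) = \inf_{D} \lambda_\rho^D(z),$$
where $D$ runs over all round discs in $\rs$ contained in $\Omega$ with $z \in D$ (M\"obius images of $\Hp$ inside $\rs$ are round discs, and a M\"obius map pulls the Poincar\'e metric of $D$ back to the hyperbolic metric on $\Hp$, so the hyperbolic length of $v'$ in $\Hp$ equals the Poincar\'e length of $df(v')$ in $D$). Because the Schwarz--Pick lemma forces the Poincar\'e density at a fixed interior point to decrease under disc enlargement, the infimum is attained as $D$ runs through the \emph{maximal} discs in $\Omega$ through $z$.

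Next I would work in the upper half-space model of $\Ht$. Fix a maximal disc $D \subset \Omega$ containing $z$; normalizing by a M\"obius transformation of $\rs$ if needed, we may assume $D$ is a Euclidean disc with center $c$ and radius $R$. The associated support plane $P_D$ is then the hemisphere $\{(x,t) : |x-c|^2 + t^2 = R^2\}$ over $\partial D$, and $H_{P_D}$ is the half-ball $\{(x,t) : |x-c|^2 + t^2 < R^2\}$, which is disjoint from $\chb$ by the definition of a support plane. A direct calculation, using internal tangency of the horoball (the Euclidean ball of radius $r$ centered at $(z,r)$) with the hemisphere, shows the largest horoball at $z$ contained in $H_{P_D}$ has Euclidean radius
$$r_D(z) = \frac{R^2 - |z-c|^2}{2R},$$
which agrees precisely with the Poincar\'e density formula $\lambda_\rho^D(z) = 2R/(R^2 - |z-c|^2) = 1/r_D(z)$. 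Hence
$$\lambda_\tau(z) = \inf_D \lambda_\rho^D(z) = \frac{1}{\sup_D r_D(z)},$$
where the supremum runs over maximal discs $D \subset \Omega$ through $z$.

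To finish, I would identify $\sup_D r_D(z)$ with $h(z)$. One inequality is immediate: each horoball realizing $r_D(z)$ sits inside $H_{P_D}\subset \Ht\setminus\chb$, so $r_D(z) \le h(z)$ for every such $D$. For the reverse inequality, the maximal horoball $B_z$ of radius $h(z)$ has interior disjoint from the convex hull $CH(\rs-\Omega)$, so by maximality it must touch $\chb$ at some point $x_0$; the hyperbolic tangent plane $P$ to $B_z$ at $x_0$ is then forced to be a support plane of the convex hull (any point of the hull on the $B_z$ side of $P$ would force a geodesic from it to $x_0$ to enter the interior of $B_z$, a contradiction), which makes $B_z \subset H_P$ and hence $r_{D_P}(z) = h(z)$. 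Combining the two inequalities gives $\lambda_\tau(z) = 1/h(z)$, which is the claim. The main technical point is the horoball-to-hemisphere tangency computation that fixes the correspondence between $r_D(z)$ and $\lambda_\rho^D(z)$; once that is in hand, everything else is a direct application of the convex-hull geometry set up in Section 2.
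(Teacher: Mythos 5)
Your proof is correct and follows essentially the same route as the paper's: both express the Thurston density as an infimum of Poincar\'e densities of round discs in $\Omega$ through $z$, identify each such density with the reciprocal of the radius of the horoball at $z$ tangent to the hemisphere over the disc, and obtain the reverse inequality from the support plane tangent to $B_z$ at its touching point (which is $r(z)$, i.e.\ the plane $P_z$ the paper uses). The only differences are presentational: you carry out the tangency computation explicitly in the upper half-space model where the paper uses a parabolic normalization, and your Schwarz--Pick reduction to maximal discs is a harmless detour, since the bound $r_D(z)\le h(z)$ holds for every disc $D\subset\Omega$ through $z$, maximal or not.
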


\begin{proof}
Recall the disk model $\Delta$ for $\Hp$ with the metric 
$$\rho_{\Delta}(z) = \frac{2}{1-|z|^{2}}\ dz  = \lambda_{\Delta}(z)\ dz .$$
By normalizing, we may define the length of a vector \hbox{$v \in T_{x}(\Omega)$} in the Thurston metric to be the infimum of the length of vectors \hbox{$v' \in T_{0}\Delta$} such that there exists a M\"obius transformation $m$ such that \hbox{$m(\Delta)\subset \Omega$} with $dm_0(v') = v$.
The hyperbolic length of $v'$ is  then simply \hbox{$2|v'| = \frac{2}{|m'(0)|}|v|$.}
Thus
$$\tau(z) = \inf_{m} \frac{2}{|m'(0)|}$$
where the infimum is taken over all M\"obius transformations $m$ such that 
$m(\Delta) \subset \Omega$ and $m(0) = z$.  

Given such a M\"obius transformation $m$,
we let  \hbox{$D_{m} = m(\Delta)$} and let $P_{m}$ be the hyperbolic plane with the same
boundary as $D_{m}$.
We further let $B_{m}$ be the horoball based at $z$ which is tangent to $P_{m}$. As 
\hbox{$D_{m} \subseteq \Omega$}, $P_{m}$ does not intersect the interior of  the convex hull \hbox{$CH(\rs-\Omega)$}. Similarly, the interiors of $B_{m}$ and \hbox{$CH(\rs-\Omega)$} are disjoint. 
Let $h_{m}$ be the radius of the horoball $B_{m}$. 
Notice that $h_m\le h(z)$.
Let  $n$ be a parabolic M\"obius transformation with fixed point $z$,
which sends $B_{m}\cap P_m$ to the point vertically above $z$.
Then, $n$ fixes $B_{m}$ and sends $D_{m}$ to a disk  centered at $z$
with radius $2h_m$.
Therefore, since $n\circ m$ sends a disk of radius $1$ to a disk of radius $2h_{m}$
and sends centers to centers,  
$$|n\circ m'(0)| = |n'(z)m'(0)| = 2h_{m}.$$
Since $n$ is parabolic and fixes $z$,  \hbox{$|n'(z)| = 1$}, which implies
that \hbox{$|m'(0)|=2h_m$.} 
Therefore
$$\tau(z) = \inf_{m} \frac{2}{|m'(0)|} = \inf_{m} \frac{1}{h_{m}}\ge {1\over h(z)}$$

On the other hand, let $P_z$ be the support plane to $r(z)$ which is perpendicular
to the geodesic ray joining $r(z)$ to $z$. Then $P_z$
is tangent to $B_z$ at $r(z)$. Let $D_z$ be the (open) disk in $\Omega$ 
containing $z$ whose
boundary agrees with $\partial P_z$. If $m$ is a M\"obius transformation
such that $m(0)=z$ and $m(\Delta)=D_z$, then, by the above
analysis, $|m'(0)|=2h(z)$. It follows that
$$\tau(z) = {1\over h(z)}$$
as desired.
\end{proof}

\section{Finite approximations and the geometry of domains and
convex hulls}

Given a hyperbolic domain $\Omega$, let \hbox{$\{x_n\}_{n=1}^\infty$} be a dense set of points
in \hbox{$\rs-\Omega$}. If \hbox{$X_n=\{ x_1,\ldots, x_n\}$}, then we call
$\{ X_n\}$ a {\em nested finite approximation} to \hbox{$\rs- \Omega$}.
Notice that $\{X_n\}$ converges to $\rs-\Omega$ in the
Hausdorff topology on closed subsets of $\rs$. 
We let \hbox{$\Omega_n = \rs - X_n$},
denote the quasihyperbolic metric on $\Omega_n$ by  $q_n$, and let 
\hbox{$r_n: \Omega_n \rightarrow {\rm Dome}(\Omega_n)$} be the nearest point retraction.
Since \hbox{$\Omega \subseteq \Omega_n$}, $q_n$ and $r_n$ are both defined on
$\Omega$.  Results of Bowditch \cite{bowditch} imply that $\{CH(X_n)\}$
converges to \hbox{$CH(\rs-\Omega)$} and that \hbox{$\{ {\rm Dome}(\Omega_n)\}$} converges
to $\chb$ in the Hausdorff topology on closed subsets of $\Ht$.

The following approximation lemma will allow us to reduce the
proof of many of our results to the case where $\Omega$ is the complement
of finitely many points.

\begin{lemma}
\label{quasiapprox}
Let $\{ X_n\}$ be a nested finite approximation to $\rs-\Omega$. 
If, for all $n$,
$\Omega_n=\rs-X_n$, $q_n$ is the quasihyperbolic metric on $\Omega_n$,
and $p_n$ is the Thurston metric on $\Omega_n$, then
\begin{enumerate}
\item
the sequence $\{ q_n\}$ of metrics  converges uniformly on compact subsets of
$\Omega$ to  the quasihyperbolic metric $q$ on $\Omega$,
\item
the sequence $\{ \tau_n\}$ of metrics  converges uniformly on compact subsets of
$\Omega$ to  the Thurston metric $\tau$ on $\Omega$, 
\item
the associated
nearest point retractions
\hbox{$\{r_n: \Omega_n \rightarrow {\rm Dome}(\Omega_n)\}$} converge to the nearest point retraction
$r: \Omega \rightarrow \chb$ uniformly on compact subsets of $\Omega$,
\item
if $z,w\in\Omega$, then
\hbox{\{$d_{{\rm Dome}(\Omega_n)}(r_n(z),r_n(w))\}$} converges to 
\hbox{$d_{\chb}(r(z),r(w))$}, and
\item
if $z\in\Omega$, then 
$$\lim {\rm inj}_{{\rm Dome}(\Omega_n)}(r_n(z))={\rm inj}_{\chb}(r(z)).$$
\end{enumerate}
\end{lemma}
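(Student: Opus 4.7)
The plan is to derive all five assertions from two Hausdorff convergences already in hand: $X_n\to\rs-\Omega$ in $\rs$, and (via Bowditch's result) $CH(X_n)\to CH(\rs-\Omega)$ and $\mathrm{Dome}(\Omega_n)\to\chb$ as closed subsets of $\Ht$. Two monotonicity observations will do much of the work. Since $X_n\subseteq X_{n+1}\subseteq\rs-\Omega$, the Euclidean distance functions satisfy $\delta_n(z)\ge\delta_{n+1}(z)\ge\delta(z)$; and since $CH(X_n)\subseteq CH(X_{n+1})\subseteq CH(\rs-\Omega)$, the maximal horoball radii of Lemma \ref{projhoro} satisfy $h_n(z)\ge h_{n+1}(z)\ge h(z)$.

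For (1), the Hausdorff convergence $X_n\to\rs-\Omega$ gives pointwise $\delta_n(z)\to\delta(z)$ on $\Omega$; monotonicity plus continuity of the limit $\delta$ then upgrades this, by Dini's theorem, to uniform convergence on each compact $K\subset\Omega$, and positivity of $\delta$ on $K$ transfers this to the reciprocals, giving $q_n\to q$. For (2), I would apply Lemma \ref{projhoro}: $h_n(z)$ is the radius of the maximal horoball at $z$ whose interior misses $\mathrm{Dome}(\Omega_n)=\partial CH(X_n)$, and $r_n(z)$ is its tangency point. Pointwise convergence $h_n(z)\to h(z)$ follows from maximality: for any $\varepsilon>0$ the slightly enlarged horoball $B_z^{h(z)+\varepsilon}$ meets $CH(\rs-\Omega)$, and Hausdorff convergence supplies a point of $CH(X_n)$ inside $B_z^{h(z)+2\varepsilon}$ for large $n$, forcing $h_n(z)\le h(z)+2\varepsilon$. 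Monotonicity plus Dini again gives uniform convergence on compacta, proving (2). For (3), $r_n(z)$ is the unique common point of $\partial B_z^{h_n(z)}$ and $\mathrm{Dome}(\Omega_n)$; combining the uniform convergence of the horoballs with the Hausdorff convergence of the domes, a subsequential extraction argument (using continuity of $r$) shows $r_n\to r$ uniformly on compact subsets.

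The main obstacle is (4). Write $d_n=d_{\mathrm{Dome}(\Omega_n)}$ and $d_\infty=d_{\chb}$. For the upper bound, take a nearly-minimizing piecewise geodesic $\gamma\subset\chb$ from $r(z)$ to $r(w)$; on its compact support $\gamma$ crosses only finitely many bending edges of $\chb$. Each flat piece of $\gamma$ lies in a support plane $P$ of $\chb$, which by Hausdorff convergence is approached by support planes $P_n$ of $\mathrm{Dome}(\Omega_n)$; splicing the flat pieces across the nearby bending edges of $\mathrm{Dome}(\Omega_n)$ yields a path from $r_n(z)$ to $r_n(w)$ of length approaching $\ell(\gamma)$, giving $\limsup d_n(r_n(z),r_n(w))\le d_\infty(r(z),r(w))$. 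For the lower bound, the upper bound forces nearly-minimizing arcs $\gamma_n\subset\mathrm{Dome}(\Omega_n)$ to have uniformly bounded length; parametrizing by arclength and invoking Arzel\`a--Ascoli produces a subsequential uniform limit $\gamma$ in $\Ht$ from $r(z)$ to $r(w)$, which by Hausdorff convergence of the domes must lie on $\chb$. Lower semicontinuity of length then yields $d_\infty(r(z),r(w))\le \liminf d_n(r_n(z),r_n(w))$. The delicate point is that the splicing in the upper bound introduces only negligible additional angle, which requires that bending edges of $\mathrm{Dome}(\Omega_n)$ near $\gamma$ converge transversally to those of $\chb$ with matching dihedral data rather than accumulating extraneous angle.

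Finally, (5) follows from the same scheme applied to essential loops. Since $\mathrm{inj}_{\chb}(r(z))$ equals half the length of the shortest essential loop at $r(z)$, short loops on $\chb$ lift to nearly-short loops on $\mathrm{Dome}(\Omega_n)$ by the flat-piece splicing of (4), and conversely short loops on $\mathrm{Dome}(\Omega_n)$ yield limit loops on $\chb$ by Arzel\`a--Ascoli. The resulting two-sided inequality gives the desired convergence of injectivity radii.
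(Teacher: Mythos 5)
Parts (1)--(3) of your argument, and the lower bound in (4) via Arzel\`a--Ascoli, match the paper's proof and are fine. The genuine gap is in your upper bound for (4). You assume that a (nearly) minimizing path $\gamma$ on $\chb$ decomposes into finitely many flat pieces, each lying in a support plane, separated by finitely many bending edges. But the lemma is stated for an arbitrary hyperbolic domain $\Omega$: only the approximating domes ${\rm Dome}(\Omega_n)$ are polyhedral, while $\chb$ itself is a general pleated surface whose bending lamination need not have isolated leaves, so $\gamma$ need not cross ``only finitely many bending edges'' and the splicing scheme does not even get started. Even in the polyhedral picture, the point you yourself flag --- that the spliced path picks up only negligible extra length, i.e.\ that the bending data of ${\rm Dome}(\Omega_n)$ converges transversally to that of $\chb$ without extraneous accumulation --- is precisely the hard part, and you leave it unresolved. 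Part (5) inherits the same gap, since you route it through the same splicing.

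The paper sidesteps all of this with one observation you did not use: the extended nearest point retraction $r_n:\Ht\cup\Omega_n\to CH(X_n)$ is $1$-Lipschitz on $\Ht$, being nearest-point projection onto a convex set. Since $CH(X_n)\subseteq CH(\rs-\Omega)$, the minimizing path $\gamma\subset\chb$ lies outside ${\rm int}(CH(X_n))$, so $r_n\circ\gamma$ is a path \emph{on} ${\rm Dome}(\Omega_n)$ of length at most $l_h(\gamma)=d_{\chb}(r(z),r(w))$, joining $r_n(r(z))$ to $r_n(r(w))$; and $d_{{\rm Dome}(\Omega_n)}(r_n(r(z)),r_n(z))\to 0$ because the domes converge in the Hausdorff topology. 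This yields $\limsup l_n\le d_{\chb}(r(z),r(w))$ with no analysis of bending whatsoever, and the analogous push-forward of short essential loops handles (5). You should replace your splicing argument with this projection argument (or else supply a genuine proof of the convergence of bending data, which is considerably harder and unnecessary here).
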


\begin{proof}
Let $\delta_n(z)$ denote the distance from \hbox{$z\in\Omega_n$} to $X_n$. It is clear that
$\{\delta_n\}$ converges to $\delta$ uniformly on compact subsets of
$\Omega$. Therefore, $\{ q_n\}$ converges uniformly to $q$ on compact subsets
of $\Omega$.

If $z\in \Omega_n$, let $B^n_z$ denote the maximal horoball based at $z$
whose interior is disjoint from $CH(X_n)$ and whose closure
intersects ${\rm Dome}(\Omega_n)$ at $r_n(z)$.
Suppose that \hbox{$\{z_n\}\subset\Omega$}
converges to $z\in\Omega$.  Since $\{CH(X_n)\}$ converges to
\hbox{$CH(\rs-\Omega)$}, $\{B^n_{z_n}\}$ converges to a horoball $B$ based at $z$ whose
interior is disjoint from \hbox{$CH(\rs-\Omega)$} and whose closure intersects $\chb $
at \hbox{$\lim r_n(z_n)$}. By definition, $r(z)$ is this intersection point, so
\hbox{$r(z)=\lim r_n(z_n)$}. Therefore, $\{r_n\}$ converges uniformly to $r$ on $\Omega$,
establishing (3). Moreover,  applying
Lemma \ref{projhoro}, we see that $\{ \tau_n(z_n)\}$ converges to $\tau(z)$,
so $\{ \tau_n\}$ converges uniformly to $\tau$ on $\Omega$, which proves (2).

Suppose that $z,w\in\Omega$.  If $\gamma_n$ is a path in ${\rm Dome}(\Omega_n)$
of length 
$$l_n=d_{{\rm Dome}(\Omega_n))}(r_n(z),r_n(w)),$$
then $\gamma_n$ converges, up to subsequence,
to a path in $\chb$ of length at most $\liminf l_n$ joining $r(z)$ to $r(w)$.
Therefore,
$$d_{\chb}(r(z),r(w))\le\liminf l_n.$$
On the other hand,
if $\gamma$ is a path in $\chb$
of length \hbox{$l=d_\chb(r(z),r(w))$}, then $r_n(\gamma)$ is a path of length at most $l$
on ${\rm Dome}(\Omega_n)$ joining $r_n(r(z))$ to $r_n(r(w))$. Notice that
\hbox{$\{d_{ {\rm Dome}(\Omega_n)}(r_n(r(z)),r_n(z))\}$} and 
\hbox{$\{d_{ {\rm Dome}(\Omega_n)}(r_n(r(w)),r_n(w))\}$} both converge to 0,
since \hbox{$\{{\rm Dome}(\Omega_n)\}$} converges to $\chb$. Therefore,
\hbox{$\limsup l_n\le l$.} It follows that $\lim l_n=l$, establishing (4).

The proof of (5) follows much the same logic as the proof of (4).


\end{proof}

\section{The geometry of finitely punctured spheres}
\label{fingeom}

In this section we consider the situation where $\Omega$ is the complement of
finitely many points in $\rs$. In this case, the convex core \hbox{$CH(\rs-\Omega)$}
of the complement of $\Omega$ is an ideal
hyperbolic polyhedron and its boundary $\chb$ consists of a finite
number of ideal hyperbolic polygons  meeting along edges. We give a precise
description of the Thurston metric in this case (Lemma \ref{finiteproj}) and show
that $r$ is 1-Lipschitz with respect to the Thurston metric (Lemma \ref{finiteptoh}).
We also introduce an intersection number for a curve 
$\alpha:[0,1]\to\chb$ which is transverse to all edges and
give a formula for the length of its pre-image $r^{-1}(\alpha)$ in the Thurston
metric on $\Omega$.

\subsection{The Thurston metric in the finitely punctured case}

Let $\Omega$
be the complement of a finite set in $\rs$. Each face $F$ of the polyhedron
\hbox{$CH(\rs-\Omega)$}  is contained in a unique plane $P_F$.
Furthermore, $P_F$ bounds a unique open half-space $H_F$ which does not intersect 
\hbox{$CH(\rs-\Omega)$}. The half-space $H_F$ defines an open disk $D_F$ in $\Omega$.  
Let $F'$ be the ideal polygon in $D_F$ with same  vertices as $F$. Abusing
notation, we will call $F'$ a {\em face} of $\Omega$.
If two faces $F_1$ and $F_2$ meet in an edge $e$ with exterior dihedral angle
$\theta_e$ then 
the spherical bigon \hbox{$B_e=D_{F_1}\cap D_{F_2}$}
is adjacent to the polygons $F'_1$ and $F'_2$ and has angle $\theta_e$. 
Thus we have a decomposition of $\Omega$ into faces $F'$  and spherical bigons $B$ corresponding to the edges of \hbox{$CH(\rs-\Omega)$}. 

If $F'$ is a face of $\Omega$ then it inherits a hyperbolic metric $p_{F'}$
which is the restriction of the Poincar\'e metric on $D_F$. Each spherical
bigon $B_e$ inherits a Euclidean metric in the following manner.
Let $f_e$ be a M\"obius transformation taking $B_e$ to
the Euclidean rectangle \hbox{$S_e={\bf R}\times[0, \theta_e]$}.
We then give $B_e$ the Euclidean metric
$p_e$ obtained by pulling back the standard Euclidean
metric on $S_e$. Since any M\"obius transformation preserving $S_e$ is a Euclidean isometry,
this metric is independent of our choice of $f_e$.

Lemma \ref{finiteproj} shows that the Thurston metric
agrees with the metrics defined above.
One may view this as a special case of the discussion in section 2.1
of Tanigawa \cite{tanigawa}, see also section 8 of Kulkarni-Pinkall \cite{KP} and
section 3 of McMullen \cite{mcmullenCE}.

\begin{lemma}
\label{finiteproj}
Let $\Omega$ be the complement of finitely many points in $\rs$. Then
\begin{enumerate}
\item The Thurston metric on $\Omega$ agrees with $\rho_{F'}$ on
each face $F'$ of $\Omega$ and agrees with $p_e$ on each spherical
bigon $B_e$.
\item With respect to the Thurston metric,
the nearest point retraction $r:\Omega \rightarrow \chb$ maps each face $F'$ isometrically to the face $F$ of $\chb$ and is Euclidean projection on each bigon.
That is,  if $B_e$ is a bigon, there is an isometry $g_e:e\rightarrow {\bf R}$
such that 
$$\begin{CD}
B @>r>> e\\
@VV f_e V  @VV g_e V\\
{\bf R}\times[0,\theta] @>\pi_1>> {\bf R}
\end{CD}
$$
\end{enumerate}
\end{lemma}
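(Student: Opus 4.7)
The plan is to apply Lemma \ref{projhoro}, which expresses the Thurston metric as $\tau(z)=\frac{1}{h(z)}\,dz$, where $h(z)$ is the Euclidean radius of the maximal horoball at $z$ with interior disjoint from $\chb$. I would handle the two cases of the partition separately, by placing the relevant face or edge in a canonical position using a M\"obius normalization of the upper half-space model of $\Ht$.

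For a face $F'$, I normalize so that the support plane $P_F$ is the vertical Euclidean plane $\{y=0\}$ and $D_F$ is the half-plane $\{y>0\}$ at infinity (opposite to $CH(\rs-\Omega)$). Then the Euclidean ball of radius $y$ centered at $(x,y,y)$ is tangent to $P_F$ at $(x,0,y)$, and the assignment $(x,y,0)\mapsto(x,0,y)$ is simultaneously an isometry from the Poincar\'e metric on $D_F$ to the hyperbolic metric on $P_F$ and the geometric nearest-point map between them. Because $F'$ and $F$ share the same ideal vertices on $\partial D_F=\partial P_F$ and this map fixes the ideal boundary, the isometry carries $F'$ bijectively onto $F$; hence $r(z)\in F$ for $z\in F'$, and the horoball of radius $y$ at $z$ is tangent to $\chb$ at $r(z)$. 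The support-plane property of $P_F$ makes this horoball maximal, so Lemma \ref{projhoro} gives $\tau(z)=\rho_{D_F}(z)$ and $r|_{F'}$ is the claimed isometry.

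For a bigon $B_e$, I normalize so that $e$ is the vertical geodesic over the origin and $B_e$ is the angular sector $\{re^{i\phi}:r>0,\ \phi\in(0,\theta_e)\}$. A direct Euclidean computation shows that the horoball at $z=re^{i\phi}$ tangent to the geodesic $e$ has Euclidean radius $|z|=r$ and touches $e$ at height $r$. The key geometric point is maximality: although a smaller horoball of radius $r\sin\phi$ (resp.\ $r\sin(\theta_e-\phi)$) is tangent to the plane $P_{F_1}$ (resp.\ $P_{F_2}$), the tangency point lies strictly on the side of $e$ within that plane opposite to the face $F_i$, because in this normalization the bigon and the local footprint of $\chb$ lie in angular sectors of opening $\theta_e$ on opposite sides of the edge, so the horoball does not meet $\chb$ until $h=r$. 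Hence $h(z)=|z|$, and passing to logarithmic polar coordinates $u=\log r$, $v=\phi$ turns $\tau=\frac{dz}{|z|}$ into the standard Euclidean metric on the strip $\mathbb{R}\times(0,\theta_e)$, which is exactly $p_e$. In these coordinates $r(z)=(0,0,e^u)$, which, after identifying $e$ with $\mathbb{R}$ via arc length $\log t$, is Euclidean projection onto the first factor.

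The main obstacle is the geometric verification just described in the bigon case: that the small tangencies on the planes $P_{F_i}$ fall outside their faces $F_i$, which reduces to careful bookkeeping of the side of $e$ that each face occupies within its supporting plane, relative to the chosen angular sector for the bigon. Once this is in hand, the identifications of $h(z)$, the Thurston metric, and the nearest-point retraction in each case follow directly from Lemma \ref{projhoro} and the coordinate formulas above.
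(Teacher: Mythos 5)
Your proof follows essentially the same route as the paper's: both apply Lemma \ref{projhoro} after a M\"obius normalization and compute the maximal horoball separately on faces and on bigons (the paper normalizes $D_F$ to the unit disk rather than a half-plane, which is immaterial), and your treatment of the face case is complete. In the bigon case you are in fact more explicit than the paper, which simply asserts that the maximal horoball at a point of the sector is the one tangent to $e$; your idea for verifying this --- that the smaller horoballs tangent to $P_{F_1}$ and $P_{F_2}$ touch those planes at points lying off the faces $F_1$ and $F_2$ --- is the right one.

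The one slip is in the bookkeeping you yourself flag as the delicate point. The bigon of angle $\theta_e$ in the decomposition is bounded by the two hyperbolic geodesics of $D_{F_1}$ and $D_{F_2}$ joining the endpoints of $e$, not by the circles $\partial D_{F_1}$ and $\partial D_{F_2}$; in your normalization $\partial D_{F_i}$ is the line through the origin making angle $\pi/2$ with the corresponding side of the sector, so the horoball at $z=re^{i\phi}$ tangent to $P_{F_1}$ has radius $r|\cos\phi|$, not $r\sin\phi$, and its tangency point lies over the ray of $\partial D_{F_1}$ opposite the one carrying the remaining ideal vertices of $F_1$ --- which is precisely the ``wrong side of $e$'' statement you need. (If one instead took the sector bounded by $\partial D_{F_1}$ and $\partial D_{F_2}$, as the formula $r\sin\phi$ suggests, the conclusion would actually fail near the sides of that sector, where the tangency point does land on the face and $r(z)$ lies in $F_i$ rather than on $e$.) With the boundaries placed correctly, the rest of your argument, including the logarithmic-polar computation and the identification of $r$ with Euclidean projection, goes through.
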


{\bf Proof:}
First suppose that $F'$ is a face of $\Omega$. We may normalize by
a M\"obius transformation so that $D_F$ is the unit disk and $z=0$.
In this case, the maximal horoball at $z$ has height 1, so
the Thurston metric for $\Omega$ at $z$ agrees with the Poincar\'e metric
on $D_F$ at $z$. Moreover, one sees that $r$ agrees with the hyperbolic
isometry from the unit disk to $P_F$ on $F'$.

Now let $e$ be an edge between two faces $F_1$ and $F_2$.
We may normalize so that $e$ is  the vertical line joining $0$ to $ \infty$ and define the isometry
$g_e$ by \hbox{$g_e(0,0,t) = \log{t}$}. We  may further assume that  $B_e$
is the sector \hbox{$\{se^{i t}\ | \ s > 0,\ 0 \leq t \leq \theta\}.$} For a point \hbox{$z = se^{it} \in B$},
the maximal horoball at $z$ is tangent to $e$ at height $s$.
Therefore the Thurston metric on $B$ is $\frac{1}{s}dz$,
which agrees with $p_e$. Moreover, \hbox{$r(se^{it})=s$},
so $r$ has the form described above.
\mbox{ \eproof}

\subsection{Intersection number and the nearest point retraction}\hfill
If  \hbox{$\alpha:[0,1] \rightarrow  \chb$} is a curve  on 
$\chb$ which is transverse to the edges of $\chb$,
then we define $i(\alpha)$ to be the sum of the dihedral angles at each of its intersection points with an edge. We allow the endpoints of $\alpha$ to lie on edges.
(In a more general setting, $i(\alpha)$ is thought of as the
intersection number of $\alpha$ and the bending lamination 
on $\chb$, see \cite{EM87}.) We denote by $l_h(\alpha)$ the length of $\alpha$ in
the intrinsic metric on $\chb$ (which is also its length in
the usual hyperbolic metric on $\Ht$).

If $\gamma:[0,1]\to\Omega$ is a rectifiable curve in $\Omega$, 
then let  $l_\tau(\gamma)$ denote its length in
the Thurston metric. Similarly, we can define $l_q(\gamma)$ to be its
length in the quasihyperbolic metric and $l_\rho(\gamma)$ to be its length in
the Poincar\'e metric. (Since all these metrics are complete and locally bilipschitz,
a curve is rectifiable in one metric if and only if it is rectifiable in another.)

\begin{lemma}
Suppose that $\Omega$ is the complement of finitely many points in $\rs$.
\begin{enumerate}
\item If $\gamma:[0,1] \rightarrow \Omega$ then  
$$ l_\tau(\gamma) \geq l_h(r\circ\gamma).$$
\item
If $\alpha:[0,1] \rightarrow \chb$ is a curve transverse to the edges of $\chb$ then
$$l_\tau(r^{-1}(\alpha)) = l_h(\alpha) +   i(\alpha).$$
\end{enumerate}
\label{finiteptoh}
\end{lemma}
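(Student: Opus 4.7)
The plan is to reduce both parts to the explicit local description of $r$ provided by Lemma \ref{finiteproj}: $\Omega$ decomposes into faces $F'$ and spherical bigons $B_e$, and with respect to the Thurston metric, $r$ is an isometry from $F'$ onto the face $F\subset\chb$, while on $B_e$ it becomes the coordinate projection $\pi_1\colon \mathbb{R}\times[0,\theta_e]\to\mathbb{R}$ after conjugating by the isometries $f_e$ and $g_e$.

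For part (1), I would subdivide the parameter interval of $\gamma$ into countably many (and, up to refinement, finitely many on any compact piece) subintervals $I_j$ so that each $\gamma|_{I_j}$ is contained in the closure of a single face $F'$ or of a single bigon $B_e$; such a subdivision exists because the faces and bigons have pairwise disjoint interiors and cover $\Omega$, and because $\gamma$ is continuous. On a face subarc, part (2) of Lemma \ref{finiteproj} gives $l_\tau(\gamma|_{I_j})=l_h(r\circ\gamma|_{I_j})$ exactly. On a bigon subarc, passing through $f_e$ to the Euclidean strip identifies $l_\tau(\gamma|_{I_j})$ with the Euclidean length of $f_e\circ\gamma|_{I_j}$, while $l_h(r\circ\gamma|_{I_j})$ equals the Euclidean length of its projection $\pi_1\circ f_e\circ\gamma|_{I_j}$; since orthogonal projection is $1$-Lipschitz, we get $l_\tau(\gamma|_{I_j})\ge l_h(r\circ\gamma|_{I_j})$. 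Additivity of length over the subdivision yields (1).

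For part (2), transversality together with compactness of $[0,1]$ give finitely many edge crossings $p_1,\ldots,p_N$ of $\alpha$, occurring on edges $e_1,\ldots,e_N$ with exterior dihedral angles $\theta_{e_1},\ldots,\theta_{e_N}$. These points partition $\alpha$ into subarcs $\alpha_0,\ldots,\alpha_N$, each lying in a single closed face $F_j$ of $\chb$. The preimage $r^{-1}(\alpha)$ is then a concatenation of two types of pieces: the isometric lifts $r^{-1}(\alpha_j)\subset F_j'$, each contributing $l_\tau(r^{-1}(\alpha_j))=l_h(\alpha_j)$ by Lemma \ref{finiteproj}, and, at each crossing point $p_k\in e_k$, the whole fiber $r^{-1}(p_k)\cap\overline{B_{e_k}}$, which under $f_{e_k}$ becomes a vertical segment $\{t_k\}\times[0,\theta_{e_k}]$ and therefore has Thurston length exactly $\theta_{e_k}$. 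Summing gives $l_\tau(r^{-1}(\alpha))=\sum_{j} l_h(\alpha_j)+\sum_{k}\theta_{e_k}=l_h(\alpha)+i(\alpha)$.

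The main obstacle is a bookkeeping issue rather than a conceptual one: one must verify that $r^{-1}(\alpha)$ really is a rectifiable curve on which length decomposes additively in the claimed way. Concretely, at each crossing $p_k$ one needs the face lift in $F_j'$ to meet the endpoint of the vertical bigon fiber $\{t_k\}\times\{0\}$ or $\{t_k\}\times\{\theta_{e_k}\}$, which follows from the compatibility of the isometries $f_e$ and $g_e$ with the face isometries along $\partial D_{F_j}$. One must also handle the (easy) case in which an endpoint of $\alpha$ lies on an edge by including only the portion of the corresponding fiber traversed. Once these local identifications are in place, the additive computation of $l_\tau(r^{-1}(\alpha))$ is immediate.
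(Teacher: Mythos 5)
Your proposal is correct and follows essentially the same route as the paper: both parts are deduced from the local description of $r$ in Lemma \ref{finiteproj}, with (1) coming from the fact that $r$ is an isometry on faces and a $1$-Lipschitz Euclidean projection on bigons, and (2) from decomposing $r^{-1}(\alpha)$ into isometric face lifts contributing $l_h(\alpha)$ plus the vertical bigon fibers over the crossing points, each of Thurston length $\theta_{e_k}$. The paper's argument is just a terser version of yours.
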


\begin{proof}
By Lemma \ref{finiteproj}, $r$  is an isometry on the faces of $\chb$
and  Euclidean projection on the bigons. It follows that $r$ is 1-Lipschitz. Thus, for
$\gamma:[0,1] \rightarrow \Omega$,
$$l_\tau(\gamma) \geq l_h(r\circ\gamma).$$

Now let $\alpha:[0,1] \rightarrow \chb$ be a curve transverse to the edges of $\chb$.
Let $\{x_i = \alpha(t_i)\}_{i=1}^{n-1}$ be the finite collection of intersection points of
$\alpha$ with the edges of $\chb$. Let $t_0=0$ and $t_n=1$. (Notice that if $\alpha(0)$ lies
on an edge, then \hbox{$t_0=t_1$}, while \hbox{$t_n=t_{n-1}$} if $\alpha(1)$ lies on an edge.)

Let $\alpha_i$ be the restriction of $\alpha$ to \hbox{$(t_{i-1},t_{i})$}.
Then
$$l_\tau(r^{-1}(\alpha)) =
\sum_{i=1}^n l_\tau(r^{-1}(\alpha_i)) + \sum_{i=1}^{n-1} l_\tau(r^{-1}(x_i))$$
As $r$ is an isometry on the interior of the faces, 
$$\sum_i l_\tau(r^{-1}(\alpha_i)) = \sum_i l_h(\alpha_i) = l_h(\alpha).$$

If $x_i$ lies on the edge $e_i$ of $\chb$ with exterior dihedral angle $\theta_i$,
then, by Lemma \ref{finiteproj},  $r^{-1}(x_i)$ has length $\theta_i$
in the Thurston metric. Therefore, 
$$ \sum_{i=1}^{n-1} l_\tau(r^{-1}(x_i)) = \sum_{i=1}^{n-1} \theta_i = i(\alpha).$$
Thus,
 $$l_\tau(r^{-1}(\alpha)) = l_h(\alpha) +   i(\alpha)$$
as claimed.
\end{proof}

\noindent
{\bf Remark:} Part (2)  is closely related to
Theorem 3.1 in McMullen \cite{mcmullenCE} which shows
that 
$$l_\rho(r^{-1}(\alpha))\le l_h(\alpha)+i(\alpha).$$

\section{Intersection number estimates}

We  continue to assume throughout this section
that $\Omega$ is the complement of finitely many points in $\rs$.
We obtain bounds on $i(\alpha)$ for short geodesic arcs
in the thick part (Lemma \ref{thickintbound}) and short
simple closed geodesics in $\chb$ (Lemma \ref{shortgeointbound}). We use 
Lemmas \ref{thickintbound} and \ref{shortgeointbound} to bound the angle of intersection between an edge of $\chb$
and a short simple closed geodesic (Lemma \ref{anglebound}).

\subsection{Short geodesic arcs in the thick part}

We first state a mild generalization of Lemma 4.3 from \cite{BC03}
which gives the bound on $i(\alpha)$ for short geodesic arcs. 
In \cite{BC03} we define a function
$$F(x) = \frac{x}{2} + \sinh^{-1}\left(\frac{\sinh\left( {x\over 2}\right)}{\sqrt{1-\sinh^{2}\left({x\over 2}\right)}}\right)$$
and we let \hbox{$G(x) = F^{-1}(x)$.}  The function $F$ is
monotonically increasing and  has domain \hbox{$(0, 2 \sinh^{-1}(1))$.} 
The function $G(x)$ has domain $(0,\infty)$, has asymptotic behavior
\hbox{$G(x) \asymp x$} as $x $ tends to $0$, and $G(x)$ approaches $ 2\sinh^{-1}(1)$
as $x$ tends to $ \infty$. 

\begin{lemma}
\label{thickintbound}
Suppose that $\Omega$ is the complement of finitely many points in $\rs$. 
If \hbox{$\alpha:[0,1] \rightarrow \chb$} is a  geodesic path 
and
$$l_{h}(\alpha) \le G({\rm inj}_{\chb}(\alpha(t))),$$
for some $t \in[0,1]$, then
$$i(\alpha) \le 2\pi.$$
\end{lemma}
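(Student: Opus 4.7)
The plan is to argue by contradiction: assume $i(\alpha) > 2\pi$, and show this forces $l_h(\alpha) > G(R)$, where $R = \mathrm{inj}_{\chb}(\alpha(t))$ at the distinguished parameter $t$. The first step is to lift the configuration to $\Ht$ while preserving embeddedness. Because $R$ is the injectivity radius at $\alpha(t)$, the intrinsic metric ball $B_R$ of radius $R$ about $\alpha(t)$ embeds isometrically in $\chb$, and since $G(x) \le x$ on its domain (as $F(x) = x/2 + \sinh^{-1}(\sinh(x/2)/\sqrt{1-\sinh^{2}(x/2)}) \ge x$), the arc $\alpha$ lies inside $B_R$. I then lift $B_R$ to a piecewise totally geodesic embedded disk $\widetilde{B}_R \subset \Ht$ made of lifted faces glued along lifted edges; the curve $\alpha$ lifts to a piecewise geodesic $\tilde\alpha \subset \Ht$ that is geodesic on each face and bends by the exterior dihedral angle $\theta_j$ at the $j$-th edge crossing, with $\sum_j \theta_j = i(\alpha) > 2\pi$.

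The heart of the argument is the following hyperbolic-geometric estimate for piecewise geodesics in $\Ht$: if $\tilde\beta$ is a piecewise geodesic of length $L$ whose consecutive segments meet at bending angles $\theta_j$ with $\sum_j \theta_j \ge 2\pi$, then its two endpoints lie at $\Ht$-distance at most $G(L)$. This is the defining property of $F$: the extremal case is a single hinge with one bend of total angle $2\pi$ traversed along two subsegments of total length $L$, where an explicit hyperbolic trigonometric computation in the plane containing the hinge gives endpoint distance equal to $G(L)$. I would reduce the multi-hinge case to the single-hinge case by a straightening argument, replacing two consecutive bends by a single bend of their combined angle, positioned along the geodesic through the three relevant vertices, in a way that does not increase the endpoint distance nor decrease the total bending.

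Granting this estimate, I extract from $\tilde\alpha$ a minimal sub-arc on which the accumulated bending first reaches (at least) $2\pi$, so that the estimate applies and bounds the $\Ht$-distance of its endpoints by $G$ of its length, hence by $G(l_h(\alpha)) \le G(G(R))$. On the other hand, the embeddedness of $\widetilde{B}_R$ lets me bound the same $\Ht$-distance from below by the intrinsic $\chb$-distance of the two endpoints (up to controlled loss), and this intrinsic distance equals the length of the sub-arc, which accounts for a definite fraction of $l_h(\alpha)$. The two bounds together are incompatible whenever $l_h(\alpha) \le G(R)$, forcing $i(\alpha) \le 2\pi$.

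The main obstacle will be the hyperbolic-geometric estimate itself, and in particular the straightening reduction, since consecutive bends of $\tilde\alpha$ occur about different axes (the lifts of distinct dome edges) and are genuinely three-dimensional rather than planar; one must check that three-dimensional bending straightens in the favourable direction. The finitely punctured hypothesis is what makes this approach tractable compared with the proof of Lemma 4.3 in \cite{BC03}: there are only finitely many edges of $\chb$ to keep track of, an induction on the number of edge crossings is available, and one entirely avoids the approximation by the bending measure needed in the general setting.
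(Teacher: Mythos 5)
Your proposal has a genuine gap at its foundation: you identify the total extrinsic bending of the path $\alpha\subset\Ht$ with the intersection number $i(\alpha)$. These are not equal. If $\alpha$ crosses an edge of exterior dihedral angle $\theta^b$ at angle $\phi$, the extrinsic turning of the path at that crossing is $\theta^c=\cos^{-1}\bigl(\sin^2(\phi)\cos(\theta^b)+\cos^2(\phi)\bigr)\le \sin(\phi)\,\theta^b$ (this is exactly the computation in the proof of Lemma \ref{anglebound}), so when $\alpha$ crosses edges at small angles the total turning of the spatial path can be far less than $i(\alpha)$, and your hypothesis ``total bending $>2\pi$'' does not transfer from $i(\alpha)$ to $\tilde\alpha$. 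The paper circumvents precisely this difficulty by working not with the path but with the one-parameter family of support planes $\{P_t\,|\,0\le t\le i(\alpha)\}$ along $\alpha$, whose total angular parameter \emph{is} $i(\alpha)$ by construction; the key structural input (Lemma \ref{sptbnd}) is that a sub-family whose half-spaces all meet a fixed one sweeps out angle less than $\pi$, so $i(\alpha)>2\pi$ forces three support planes $P_0,P_{t_1},P_{t_2}$ with consecutive boundary circles tangent in $\rs$.

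The endgame also does not close as written. Your claimed estimate (endpoints of a piecewise geodesic of length $L$ with total bending $\ge 2\pi$ lie within distance $G(L)$) is unsubstantiated — the ``extremal single hinge of angle $2\pi$'' is not a meaningful hyperbolic configuration and does not produce the function $F$, whose specific form comes from Lemma \ref{isosceles} combined with the tangency angle bound of Lemma \ref{arcanglebound}. More importantly, even granting such an estimate, comparing an upper bound $G(l_h(\alpha))$ with a lower bound that is a definite fraction of $l_h(\alpha)$ yields no contradiction, because $G(L)\asymp L$ as $L\to 0$; and embeddedness of the disk $B_R$ alone can never be contradicted from data inside a disk. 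The injectivity radius hypothesis must instead be violated by exhibiting a \emph{short homotopically non-trivial loop} through a point of $\alpha$: the paper closes up a sub-arc $\alpha_1$ by a short path $\beta$ on $P_0\cup P_{t_2}$ (of length controlled by Lemmas \ref{arcanglebound} and \ref{isosceles}, giving total length $<2F(l_h(\alpha))$) and proves the loop is essential in $\Ht-{\rm int}(CH(X))$ by showing it meets a properly embedded half-plane $R_b$ transversely in exactly one point. This loop-construction and essentiality argument is the missing idea in your sketch, and without it the hypothesis $l_h(\alpha)\le G({\rm inj}_{\chb}(\alpha(t)))$ is never actually used against anything.
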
 

In the appendix, we give a self-contained proof of Lemma \ref{thickintbound}.
The proof uses the same basic techniques as in \cite{BC03},
but the arguments are much more elementary, as we
need not use the general theory of bending measures. 

\subsection{Cusps and collars}

In this section, we recall a version of the Collar Lemma (see Buser \cite{buser})
which gives a complete description of the portion of a hyperbolic
surface with injectivity radius less than $\sinh^{-1}(1)$.

If $\gamma$ is a simple closed geodesic in a hyperbolic surface $S$,
then we define the {\em collar} of $\gamma$ to be the set 
$$N(\gamma) = \{x\ | d_S(x,\gamma) \leq w(\gamma)\} \mbox{ where }
w(\gamma) =  \sinh^{-1}\left(\frac{1}{\sinh\left(\frac{l_{S}(\gamma)}{2}\right)}\right).$$

A {\em cusp} is a subsurface isometric to $S^1\times [0,\infty)$ with the metric
$$\left({e^{-2t}\over \pi^2}\right)d\theta^2+dt^2.$$

\medskip\noindent
{\bf Collar Lemma:} {\em 
Let $S$ is a complete hyperbolic surface.
\begin{enumerate}
\item The collar $N(\gamma)$  about a simple closed geodesic $\gamma$ is
isometric to ${\bf S}^1 \times[-w(\gamma_i),w(\gamma_i)]$ with
the metric $$\left({l_S(\gamma_i)\over 2\pi}\right)^2\cosh^2{t}\ d\theta^2+dt^2.$$
\item A collar about a simple closed geodesic is disjoint from the collar
about any disjoint simple closed geodesic and from any cusp of $S$
\item Any two cusps in $S$ are disjoint.
\item If ${\rm inj}_S(x)\le \sinh^{-1}(1)$, then $x$ lies in a cusp or in
a collar about a simple closed geodesic of length at most $2\sinh^{-1}(1)$.
\item If $x \in N(\gamma)$, then
$$\sinh({\rm inj}_S(x)) = \sinh(l_S(\gamma)/2)\cosh(d_S(x,\gamma_i)).$$
\end{enumerate}
}

If $C$ is a cusp or a collar about a simple closed geodesic, then we will
call a curve of the form $S^1\times \{t\}$, in the coordinates given above,
a {\em cross-section}.  The following observations concerning the intersections
of cross-sections with edges will be useful in the remainder of the section. 

\begin{lemma}
\label{cuspcross}
Suppose that $\Omega$ is the complement of a finite collection $X$ of
points in $\rs$ and $C$ is a cusp of $\chb$. Then every edge which
intersects $C$ intersects every cross-section of $C$ and does so orthogonally.
Moreover,
if $\gamma$ is any cross-section of $C$, then $i(C)=2\pi$.
\end{lemma}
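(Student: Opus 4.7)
The plan is to place the ideal vertex $v\in X$ corresponding to $C$ at $\infty$ in the upper half-space model of $\Ht$. Horospheres based at $v$ then become horizontal planes $H_s=\{z=s\}$, and for $s$ sufficiently large $H_s$ meets only those faces and edges of the polyhedron $CH(\rs-\Omega)$ that are asymptotic to $v$. I would identify the cusp end $C$ with $\chb\cap\{z\ge s_0\}$ for such an $s_0$.

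For the orthogonality claim, observe that the edges of $CH(\rs-\Omega)$ ending at $v=\infty$ are precisely the vertical Euclidean rays in this model, and vertical rays meet every horizontal plane orthogonally in both the hyperbolic and the induced Euclidean metrics. Hence every edge of $\chb$ entering $C$ meets each slice $\chb\cap H_s$ orthogonally. To see that these slices are really the cross-sections of $C$ in the standard cusp coordinates, I would note that $\chb\cap H_s$ is a level set of the Busemann function at $v$ on $\chb$; under the intrinsic isometry of $C$ with the standard cusp $S^1\times[0,\infty)$ these level sets correspond to coordinate circles $S^1\times\{t\}$, i.e.\ to cross-sections of $C$. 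Since each cross-section separates the cusp and each edge enters $C$ asymptotically to $v$, every entering edge meets every cross-section, and by the above it does so orthogonally (hence transversely, so exactly once).

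For the intersection number, the key observation is that $K_s:=CH(\rs-\Omega)\cap H_s$ is a convex Euclidean polygon in the flat plane $H_s$, whose vertices are precisely the intersection points of $H_s$ with the edges of $\chb$ ending at $v$. At any such vertex, the two faces of $\chb$ meeting along the corresponding edge $e$ intersect $H_s$ in two Euclidean rays; since $H_s$ is orthogonal to $e$, the angle between these rays inside $K_s$ equals the interior dihedral angle $\pi-\theta_e$ of $\chb$ along $e$, so the exterior angle of $K_s$ at that vertex is exactly $\theta_e$. Summing the exterior angles of the convex Euclidean polygon $K_s$ yields
\[
i(\gamma)\;=\;\sum_{e\text{ ending at }v}\theta_e\;=\;2\pi.
\]

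The one step I expect to need care is the identification in the second paragraph of intrinsic cross-sections of $C$ with the horospherical slices $\chb\cap H_s$; this amounts to uniqueness of the standard cusp model and the characterization of cross-sections as level sets of the Busemann function at the cusp point. Once it is in place, the rest of the proof is a direct convex-polygon angle-sum calculation.
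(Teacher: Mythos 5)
Your computation of the intersection number is essentially the paper's: move the ideal vertex $v$ to $\infty$, observe that a cross-section is the boundary of a convex Euclidean polygon in a horosphere, match exterior angles of the polygon with exterior dihedral angles of the edges, and sum to $2\pi$. That part is fine.

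The first assertion, however, is where the real content of the lemma lies, and your argument has a gap there. The cusp $C$ is not an arbitrary small neighborhood of the cusp point: by the Collar Lemma it is the \emph{specific} region isometric to $S^1\times[0,\infty)$ with the given metric, i.e.\ the region bounded by the horocycle of length $2$ (equivalently, where ${\rm inj}\le\sinh^{-1}(1)$), and it is used downstream (in Proposition \ref{distbound}) with exactly that meaning. You establish that for $s$ \emph{sufficiently large} the slice $H_s$ meets only edges asymptotic to $v$, and then ``identify'' $C$ with $\chb\cap\{z\ge s_0\}$ for such an $s_0$. But nothing in your argument guarantees that this convenient region is all of $C$ rather than a proper sub-cusp; an edge of $\chb$ not asymptotic to $v$ could a priori dip into $C\setminus\{z\ge s_0\}$, and for such an edge both the ``meets every cross-section'' claim and the orthogonality claim would fail. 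The sentence ``each edge enters $C$ asymptotically to $v$'' is precisely what must be proved for the full standard cusp, and it is asserted rather than derived.

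The paper closes this gap by working in the universal cover of the surface $\chb$ itself: identify ${\rm Dome}(\tilde\Omega)$ with $\Hp$ so that the preimage of $C$ is the horodisk $\{\,{\rm Im}(z)\ge 1\,\}$, invariant under the parabolic $z\mapsto z+2$ (this normalization encodes that $\partial C$ has length $2$). A lift $\tilde e$ of an edge either terminates at $\infty$, in which case it is a vertical line crossing every horocycle orthogonally, or has two finite endpoints; in the latter case simplicity of $e$ forces the endpoints of $\tilde e$ to lie within Euclidean distance $2$ of each other, so $\tilde e$ is a semicircle of Euclidean height less than $1$ and misses the horodisk entirely. That quantitative step --- tying the size of the excluded region to the length of the boundary horocycle of $C$ --- is what your proof is missing; with it, the rest of your argument goes through.
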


\begin{proof}
Identify the universal cover ${\rm Dome}(\tilde\Omega)$ of $\chb$ with $\Hp$
so that the pre-image of $C$ is the horodisk \hbox{$H=\{z\ |\ Im(z)\ge 1\}$,}
which is invariant under the covering transformation \hbox{$z\to z+2$.}
If $\tilde e$ is the pre-image of an edge, then $\tilde e$ either terminates
at $\infty$,  or has two finite endpoints. If $\tilde e$  terminates at infinity then  it crosses every cross-section of $H$ orthogonally. Otherwise, since the edge $e$ is simple,
the endpoints of $\tilde e$  must have Euclidean distance less than 2 from one another and therefore $\tilde e$ does not intersect $H$ (see also Lemma 1.24 in \cite{buff}).
 
Every cusp is associated to a point $x\in X$, which we may regard as an ideal vertex
of $CH(X)$.  If we move $x$ to $\infty$,
then each cross-section is a Euclidean polygon in a horosphere, so has
total external angle exactly $2\pi$.
\end{proof}

\begin{lemma}
\label{collarcross} 
Suppose that $\Omega$ is the complement of finitely many points in $\rs$ and $N(\gamma)$ is collar about a simple closed geodesic $\gamma$ on $\chb$. If $e$ is an edge of $\chb$, then 
each component of $e\cap N(\gamma)$ intersects every cross-section of $N(\gamma)$ exactly once. 
\end{lemma}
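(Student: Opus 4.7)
The plan is to lift the entire picture to the universal cover $\Hp$ of $\chb$, pick a lift $\tilde\gamma$ of $\gamma$, and normalize to the upper half-plane model so that $\tilde\gamma$ is the positive imaginary axis. Then $\mathrm{Stab}(\tilde\gamma)$ is generated by the dilation $g(z)=\lambda z$ with $\lambda=e^{l_\chb(\gamma)}$; the strip $N(\tilde\gamma)$ is a Euclidean sector about the imaginary axis; and each cross-section $S^1\times\{t\}$ of $N(\gamma)$ lifts, inside this strip, to a single Euclidean ray from the origin at angle $\theta$ with $\cot\theta=\sinh t$. Because every edge $e$ of $\chb$ has its two ideal endpoints at parabolic fixed points (lifts of the punctures in $X$), distinct from the hyperbolic fixed points $0,\infty$ of $\tilde\gamma$, each lift $\tilde e$ of $e$ is a Euclidean semicircle perpendicular to $\mathbb R$, with some center $c\in\mathbb R$ and radius $r>0$.

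The crucial step is to show that a component of $e\cap N(\gamma)$ necessarily comes from a lift $\tilde e$ that actually \emph{crosses} $\tilde\gamma$, i.e.\ $|c|<r$. Assume to the contrary that $\tilde e$ meets $N(\tilde\gamma)$ but stays on one side; after reflecting, $c\ge r>0$. Since $e$ is embedded in $\chb$, the two lifts $\tilde e$ and $g\tilde e$ are disjoint in $\Hp$, and Euclidean disjointness of the two semicircles with centers $c,\lambda c$ and radii $r,\lambda r$ forces $c/r\ge(\lambda+1)/(\lambda-1)$. On the other hand, the topmost point $c+ri$ of $\tilde e$ lies at hyperbolic distance $\sinh^{-1}(c/r)$ from $\tilde\gamma$, so $\tilde e$ can enter $N(\tilde\gamma)$ only when $c/r\le\sinh w(\gamma)=2\sqrt{\lambda}/(\lambda-1)$. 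Combining the two bounds gives $\lambda+1\le 2\sqrt\lambda$, that is $(\sqrt\lambda-1)^{2}\le 0$, which is impossible for $\lambda>1$. This ``generalized collar lemma'' for simple arcs is the only delicate point of the proof; without it, a geodesic could simply dip into the collar on one side without crossing $\gamma$ and would then meet the far cross-sections zero times and some near ones twice.

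Granting $|c|<r$, the monotonicity that makes the conclusion work is direct. Parametrize $\tilde e$ by $\phi\in(0,\pi)\mapsto c+re^{i\phi}$; a short computation gives
\[
\frac{d}{d\phi}\arg(c+re^{i\phi})\;=\;\frac{r\bigl(r+c\cos\phi\bigr)}{|c+re^{i\phi}|^{2}},
\]
and $r+c\cos\phi\ge r-|c|>0$ for all $\phi$. Hence $\arg$ is a strictly increasing bijection $(0,\pi)\to(0,\pi)$, and so $\tilde e$ meets every Euclidean ray from the origin --- in particular the lift inside $N(\tilde\gamma)$ of every cross-section of $N(\gamma)$ --- in exactly one point.

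To conclude, I would transfer the count back to the surface. A component $K$ of $e\cap N(\gamma)$ is a closed sub-arc of $e$, hence contractible, so the component $\tilde K=\tilde e\cap N(\tilde\gamma)$ of its preimage covers it by degree one and thus projects homeomorphically onto $K$. The previous paragraph yields exactly one intersection of $\tilde K$ with the lift of each cross-section, and this point projects to the unique intersection of $K$ with the corresponding cross-section of $N(\gamma)$. This is the content of the lemma.
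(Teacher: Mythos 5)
Your strategy mirrors the paper's (lift to $\Hp$, normalize $\tilde\gamma$ to the imaginary axis with deck transformation $g(z)=\lambda z$, rule out lifts of $e$ that meet $N(\tilde\gamma)$ without crossing $\tilde\gamma$, then use monotonicity of $\arg$ along a crossing semicircle); the monotonicity computation and the descent to the quotient are fine. The gap is in your ``crucial step.'' The closest point of the semicircle $\tilde e=\{c+re^{i\phi}\}$ (with $c>r>0$) to the imaginary axis is \emph{not} the apex $c+ri$: minimizing $x/y=(c+r\cos\phi)/(r\sin\phi)$ gives the minimum $\sqrt{(c/r)^2-1}$, attained at $\cos\phi=-r/c$, so
\[
d(\tilde e,\tilde\gamma)=\sinh^{-1}\!\left(\sqrt{(c/r)^2-1}\right)=\cosh^{-1}(c/r),
\]
which is strictly smaller than the apex distance $\sinh^{-1}(c/r)$ you use. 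Hence the correct condition for $\tilde e$ to meet the closed collar is $c/r\le\cosh w(\gamma)$, not $c/r\le\sinh w(\gamma)$. Since $\cosh w(\gamma)=(\lambda+1)/(\lambda-1)$ exactly, this upper bound coincides with the lower bound $c/r\ge(\lambda+1)/(\lambda-1)$ that you correctly extract from the disjointness of $\tilde e$ and $g\tilde e$. Your contradiction $(\sqrt{\lambda}-1)^2\le 0$ therefore evaporates; what survives is the borderline case $c/r=(\lambda+1)/(\lambda-1)$.

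That borderline case is not vacuous on its face: there $\tilde e$ is tangent to $\partial N(\tilde\gamma)$ from outside, so $e\cap N(\gamma)$ would have a one-point component meeting only the extreme cross-section, and simultaneously $\lambda(c-r)=c+r$, i.e.\ $\tilde e$ and $g\tilde e$ share an ideal endpoint. To close the argument you must exclude this configuration separately --- for example by upgrading the disjointness inequality to a strict one, which requires showing that two distinct lifts of the edge $e$ cannot share an ideal endpoint related by the hyperbolic element $g$ (this needs an argument: distinct lifts of a cusp-to-cusp geodesic \emph{can} share a parabolic fixed point in general). The paper avoids the computation altogether by observing that when both endpoints of $\tilde e$ lie on one side of $\tilde\gamma$ they must lie in a single fundamental domain for $g$, and then citing a collar-lemma-type statement (Lemma~1.21 of the Buff et al.\ reference) for the conclusion that such a geodesic misses $N(\tilde\gamma)$; your write-up is an attempt to prove that cited fact directly, and as it stands the quantitative step fails.
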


\begin{proof}
Again identify the universal cover of $\chb$ with $\Hp$.
Let $\tilde \gamma$ be a component of the pre-image of $\gamma$ and
let $N(\tilde\gamma)$ be the metric neighborhood of $\tilde\gamma$ of
radius $w(\gamma)$. Then $N(\tilde\gamma)$ is a component of
the pre-image of $N(\gamma)$. Notice that $\tilde\gamma$ is the axis of a hyperbolic
isometry $g$ in the conjugacy class determined by the simple closed curve $\gamma$.

Notice that every edge $e$ of $\chb$ is an infinite
simple geodesic which does not accumulate on $\gamma$.
Let $\tilde e$ be the pre-image of an edge which intersects $N(\tilde\gamma)$, then
neither of its endpoints can be an endpoint of $\gamma$.
If the endpoints of $\tilde e$ lie on the same side of $\tilde \gamma$, one
may use the fact that they must lie in a single fundamental domain for $g$ and
the explicit description of $N(\tilde\gamma)$
to show that $\tilde e$ cannot intersect $N(\tilde\gamma)$ (see also Lemma 1.21 in
\cite{buff}).
If its endpoints lie on opposite sides of $\tilde\gamma$, then it intersects
every cross-section of $N(\tilde\gamma)$ exactly once, as desired.
\end{proof}

We now observe that if $\gamma$ is  a homotopically non-trivial curve on $\chb$,
then \hbox{$i(\gamma)\ge 2\pi$.}

\begin{lemma}
\label{intlowerbound}
If $\Omega$ is the complement of finitely many points in $\rs$ and 
\hbox{$\gamma:S^1\to \chb$} is homotopically non-trivial in $\chb$, then
$$i(\gamma)\ge 2\pi.$$
\end{lemma}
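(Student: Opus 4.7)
The plan is to lift $\gamma$ to the universal cover ${\rm Dome}(\tilde\Omega)\cong\Hp$ of $\chb$, obtaining a path $\tilde\gamma$ from some $p$ to $gp$ for a non-trivial $g\in\pi_1(\chb)$. The preimage $\tilde L$ of the edges of $\chb$ is a locally finite union of disjoint geodesics in $\Hp$ with weights $\theta_e$, whose complementary regions (the lifts of faces) form a simplicial tree $T$ whose edges carry these weights and upon which $\pi_1(\chb)$ acts isometrically. Since $i(\gamma)$ equals the weighted count of crossings of $\tilde\gamma$ with $\tilde L$, and any path in $\Hp$ from $p$ to $gp$ must cross at least the leaves lying on the unique $T$-geodesic between the face-lifts containing the two endpoints, one obtains $i(\gamma)\geq d^w_T(\tilde F_p,g\tilde F_p)$, where $\tilde F_p$ is the face-lift containing $p$.

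Since $\pi_1(\chb)$ is a torsion-free group acting freely on $\Hp$, no non-trivial element stabilizes a vertex or edge of $T$: a vertex-stabilizer would produce a non-trivial quotient of the simply-connected ideal polygon $\tilde F$, and an edge-flip stabilizer would have order two. Hence $g$ acts on $T$ as a hyperbolic tree-isometry with weighted translation length $\tau^w(g)\leq d^w_T(\tilde F_p,g\tilde F_p)$, and it suffices to show $\tau^w(g)\geq 2\pi$. If $g$ is parabolic as an isometry of $\Hp$, fixing some $\xi\in\partial\Hp$ that is a lift of a cusp $x\in X$, then the face-lifts having $\xi$ as an ideal vertex form a $g$-invariant line $L_\xi\subset T$ which serves as the axis of $g$; one primitive step of the parabolic ${\rm Stab}(\xi)$ covers precisely the edges incident to the cusp $x$ in $\chb$, whose exterior dihedral angles sum to $2\pi$ by Lemma \ref{cuspcross}. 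Writing $g=g_0^m$ with $g_0$ the primitive parabolic yields $\tau^w(g)=2\pi|m|\geq 2\pi$.

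If instead $g$ is hyperbolic in $\Hp$ with axis $\ell_g$, then $\ell_g$ projects to a non-peripheral closed geodesic $\gamma^*\subset\chb$ and $\tau^w(g)=i(\gamma^*)$; the remaining challenge is to show $i(\gamma^*)\geq 2\pi$, which I expect to be the main obstacle. For classes with non-trivial image in $H_1(\chb;\mathbb{Z})$, I would use the winding numbers $a_i$ of $\gamma^*$ around each cusp $x_i$ (satisfying $\sum a_i=0$): bounding geometric intersection with each edge $e=(x_i,x_j)$ below by $|a_i-a_j|$ yields $i(\gamma^*)\geq\sum_k C(A_k)$, where $A_k=\{i:a_i\geq k\}$ and $C(A)$ is the weighted edge-cut between $A$ and $X\setminus A$. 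Summing the $2\pi$-identity of Lemma \ref{cuspcross} over the vertices of $A$ gives $C(A)=2\pi|A|-2\sum_{e\subset A}\theta_e$, so the bound $C(A)\geq 2\pi$ for every proper non-empty $A\subsetneq X$ reduces to the combinatorial cut inequality $\sum_{e\subset A}\theta_e\leq\pi(|A|-1)$, which should follow by comparing the dihedral sums of $CH(X)$ and the sub-polyhedron $CH(A)\subset CH(X)$ (of total dihedral sum $\pi|A|$), using that dihedral angles of $CH(A)$ at shared edges dominate those of $CH(X)$. Commutator classes, which have trivial $H_1$-image, require a separate argument — likely a direct tree-theoretic lower bound on $\tau^w(g)$ using the structure of cusp fans in $T$, or a Gauss-Bonnet argument on the complementary regions of $\gamma^*$ in $\chb$.
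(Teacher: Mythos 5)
Your setup (the dual weighted tree $T$, the inequality $i(\gamma)\ge \tau^w(g)$, and the parabolic case via Lemma \ref{cuspcross}) is sound, but the proof is genuinely incomplete exactly where you flag it: the hyperbolic case. Since $\pi_1(\chb)$ is free of rank $|X|-1\ge 2$, there are many essential null-homologous classes (any non-trivial commutator), and for these you offer only a guess at a strategy, so the lemma is not proved. Even in the homologically non-trivial case two steps are unestablished: the lower bound $i(e,\gamma^*)\ge|a_i-a_j|$ on the geometric intersection with each edge, and the cut inequality $\sum_{e\subset A}\theta_e\le \pi(|A|-1)$. The latter is already sharp for an ideal tetrahedron (each face realizes equality, since the three interior dihedral angles at a face sum to $\pi$), so it cannot follow from a crude comparison with $CH(A)$; note also that $CH(A)$ may be degenerate and that an edge of $CH(X)$ with endpoints in $A$ need not be an edge of $CH(A)$, so the asserted domination of dihedral angles does not obviously make sense, let alone hold.

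The missing idea is a curvature comparison, which is how the paper argues. After reducing (as you do) to closed geodesics and to cusp cross-sections, one views a closed geodesic $\gamma$ of $\chb$ as a piecewise-geodesic closed curve in $\Ht$; its total turning $\sum_i\theta^c_i$ is at least $2\pi$ by the Gauss--Bonnet/Fenchel theorem in a Hadamard manifold, while the exterior angle at each edge crossing satisfies $\cos\theta^c_i=\sin^2(\phi_i)\cos\theta^b_i+\cos^2(\phi_i)$, whence $\cos\theta^c_i-\cos\theta^b_i=\cos^2(\phi_i)(1-\cos\theta^b_i)\ge 0$ and so $\theta^c_i\le\theta^b_i$. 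Summing gives $i(\gamma)=\sum_i\theta^b_i\ge\sum_i\theta^c_i\ge 2\pi$ in two lines, with no case division by homology class. If you wish to keep the tree-theoretic framing, you would still need a uniform lower bound on the weighted translation length of every hyperbolic element, which is essentially the statement being proved; the extrinsic turning estimate is what supplies it.
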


\begin{proof}
If $\gamma$ is a closed geodesic, then, by the Gauss-Bonnet theorem, its geodesic
curvature is at least $2\pi$. But $i(\gamma)$
is the sum of the dihedral angles of the edges $\gamma$ intersects, and is therefore 
at least as large as the geodesic curvature of $\gamma$, so $i(\gamma)\ge2\pi$ in this case.

If $\gamma$ is any homotopically non-trivial curve on $\chb$, then it is homotopic
to either a geodesic $\gamma'$ or a (multiple of a) cross-section $C$ of a cusp and
either \hbox{$i(\gamma)\ge i(\gamma')$} or \hbox{$i(\gamma)\ge i(C)$.} In either case,
\hbox{$i(\gamma)\ge 2\pi$.} (Recall that Lemma \ref{cuspcross} implies that \hbox{$i(C)=2\pi$.})
\end{proof}

As an immediate corollary of Lemmas \ref{intlowerbound}, \ref{finiteproj} and \ref{quasiapprox}
we see that every essential curve in a hyperbolic domain has length greater than
$2\pi$ in the Thurston metric.

\begin{corollary}
\label{Thurstonlengthbound}
If $\Omega\subset \rs$ is a hyperbolic domain and
$\alpha:S^1\to \chb$ is homotopically non-trivial in $\Omega$, then
$$l_\tau(\alpha)>2\pi.$$
\end{corollary}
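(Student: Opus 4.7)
The plan is to reduce to the finitely-punctured case using Lemma \ref{quasiapprox}, and then use the explicit description of the Thurston metric from Lemma \ref{finiteproj} to bound $l_\tau(\alpha)$ below by an intersection number on $\chb$, which is controlled by Lemma \ref{intlowerbound}.

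First I would observe two monotonicity/approximation facts. Let $\{X_n\}$ be a nested finite approximation to $\rs-\Omega$ and let $\tau_n$ denote the Thurston metric on $\Omega_n=\rs-X_n$. Since $CH(X_n)\subset CH(\rs-\Omega)$, any horoball whose interior misses $\chb$ also misses ${\rm Dome}(\Omega_n)$, so $\tau_n(z)\le \tau(z)$ for every $z\in\Omega$, and hence $l_{\tau_n}(\alpha)\le l_\tau(\alpha)$. Moreover, because $\alpha$ is homotopically non-trivial in $\Omega$, it has non-zero winding number around some connected component $K$ of $\rs-\Omega$; by density of $\{X_n\}$, for $n$ large there is a point of $X_n$ inside $K$, so $\alpha$ is also non-trivial in $\Omega_n$. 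Thus it suffices to prove $l_{\tau_n}(\alpha)>2\pi$ in the finitely-punctured case.

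Now assume $\Omega$ is the complement of finitely many points and $\alpha$ is non-trivial in $\Omega$. After a small perturbation I may assume $\alpha$ is transverse to the seams of the face/bigon decomposition of Lemma \ref{finiteproj}. By that lemma, each bigon $B_e$ is isometric to the Euclidean strip $\mathbb{R}\times[0,\theta_e]$. Each arc of $\alpha$ in $B_e$ is either \emph{crossing} (connecting the two long sides, with Euclidean length $\ge \theta_e$) or \emph{returning} (entering and leaving on the same side). Build a curve $\alpha^*$ on $\chb$ homotopic to $r\circ\alpha$ as follows: on each face $F'$ of $\Omega$ let $\alpha^*$ equal the isometric image $r(\alpha|_{F'})$; on each crossing arc push $\alpha^*$ across the edge $e$ exactly once, transverse to $e$; on each returning arc homotope $\alpha^*$ entirely into the adjacent face. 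Then $\alpha^*$ is transverse to the edges of $\chb$, is homotopic to $r\circ\alpha$, and satisfies
$$l_{\tau_n}(\alpha)\ \ge\ \sum_{\text{crossing arcs}}\theta_e\ =\ i(\alpha^*).$$
Since $r$ is a homotopy equivalence, $\alpha^*$ is homotopically non-trivial on $\chb$, so Lemma \ref{intlowerbound} gives $i(\alpha^*)\ge 2\pi$, and hence $l_{\tau_n}(\alpha)\ge 2\pi$.

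To upgrade this to a strict inequality, note that equality would require (i) $\alpha$ to contribute zero length in every face, (ii) every crossing arc to be a perpendicular segment of length exactly $\theta_e$, and (iii) no returning arcs. By (i), consecutive bigons visited by $\alpha$ must share a boundary point; but two distinct bigons in $\Omega$ can only share ideal vertices (points of $\rs-\Omega$), which are not in $\Omega$, so $\alpha$ would be confined to a single bigon. Since each bigon is simply connected, this contradicts non-triviality of $\alpha$. Hence $l_{\tau_n}(\alpha)>2\pi$ strictly in the finite case, and by the monotonicity of Step 1, $l_\tau(\alpha)\ge l_{\tau_n}(\alpha)>2\pi$. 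The main technical point is the construction of the transverse representative $\alpha^*$ on $\chb$ with $i(\alpha^*)=\sum\theta_e$; once that is in place the argument is essentially bookkeeping on the face/bigon decomposition.
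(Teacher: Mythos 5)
Your overall route is exactly the one the paper intends (it cites precisely Lemmas \ref{quasiapprox}, \ref{finiteproj} and \ref{intlowerbound}), and your finite-case argument --- bounding $l_{\tau_n}(\alpha)$ below by the sum of the bigon widths over crossing arcs, packaging that sum as $i(\alpha^*)$ for a transverse representative of $r_n\circ\alpha$, and invoking Lemma \ref{intlowerbound} --- is correct and is the right way to make ``immediate'' precise. The monotonicity $\tau_n\le \tau$ (since $CH(X_n)\subset CH(\rs-\Omega)$, or since every round disk in $\Omega$ is a round disk in $\Omega_n$) is also a good observation for carrying a strict inequality through the limit.

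There is, however, a genuine error in the reduction step. You assert that a homotopically non-trivial loop $\alpha$ in $\Omega$ must have non-zero winding number about some component $K$ of $\rs-\Omega$. This is false: it conflates $\pi_1$ with $H_1$. If $\rs-\Omega$ has components $K_1, K_2, K_3$ and $a,b$ are loops encircling $K_1,K_2$, then the commutator $\alpha=[a,b]$ is non-trivial in the free group $\pi_1(\Omega)$ but has winding number zero about every point of $\rs-\Omega$. The conclusion you need --- that $\alpha$ is still non-trivial in $\Omega_n$ for all large $n$ --- is true, but needs a different argument: take a compact bordered surface $\Sigma\subset\Omega$ containing $\alpha$, and enlarge it by filling in every complementary disk of $\Sigma$ that lies entirely in $\Omega$, so that each remaining complementary disk meets $\rs-\Omega$. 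If $\alpha$ were trivial in $\Sigma$ it would be trivial in $\Omega$, so $\alpha$ is essential in $\Sigma$; once $X_n$ contains a point in the interior of each remaining complementary disk (which happens for large $n$ by Hausdorff convergence), the complement of $\Sigma$ in $\rs-X_n$ deformation retracts to $\partial\Sigma$, so $\Sigma\hookrightarrow\Omega_n$ is $\pi_1$-injective and $\alpha$ is non-trivial in $\Omega_n$. A smaller point: your equality analysis for strictness is carried out on a transverse perturbation of $\alpha$, whereas the quantity that must exceed $2\pi$ is the length of the original curve; this is repaired by running the crossing-arc decomposition directly on the original rectifiable curve (only finitely many crossing arcs can occur, since each has length at least $\min_e\theta_e>0$), so no perturbation is needed.
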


\subsection{Intersection bounds for short simple closed geodesics in $\chb$}
\label{shortgeo}

We may use a variation on the construction from  Lemma \ref{thickintbound} 
to uniformly bound $i(\gamma)$ for all short geodesics.  

\begin{lemma}
\label{shortgeointbound}
Suppose that $\Omega$ is the complement of finitely many points in $\rs$. 
If $\gamma$  is a simple closed geodesic of length less than $2\sinh^{-1}(1)$, then
$$2\pi\le i(\gamma)  \leq 2\pi + 2\tan^{-1}(\sinh(l_h(\gamma)/2)) \leq \frac{5\pi}{2} $$
\label{short}
\end{lemma}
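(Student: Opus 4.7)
The lower bound $2\pi \le i(\gamma)$ is immediate from Lemma \ref{intlowerbound}, since $\gamma$ is homotopically nontrivial on $\chb$. The overall strategy for the upper bound is to exploit the Collar Lemma together with Lemma \ref{thickintbound}. Setting $\ell = l_h(\gamma)$, the hypothesis $\ell < 2\sinh^{-1}(1)$ guarantees that $\gamma$ admits a collar $N(\gamma)$ of width $w(\gamma) = \sinh^{-1}(1/\sinh(\ell/2))$, and by Lemma \ref{collarcross} every edge of $\chb$ meeting $\gamma$ passes completely across $N(\gamma)$.

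For the quick proof of a weaker bound, I would subdivide $\gamma$ into $k = \lceil \ell/G(\ell/2) \rceil$ geodesic subarcs, each of length at most $G(\ell/2) = G(\mathrm{inj}_{\chb}(x))$ for any $x \in \gamma$. Each subarc then satisfies the hypothesis of Lemma \ref{thickintbound}, so $i(\alpha_j) \le 2\pi$, and summing over $j$ gives $i(\gamma) \le 2\pi k$. Since $G(\ell/2)$ is bounded below by $G(\sinh^{-1}(1)) > 0$ uniformly in $\ell \in (0, 2\sinh^{-1}(1))$, a short computation shows that $k$ is bounded by a small absolute integer (in fact $k \le 3$), yielding the weaker form of the bound mentioned in the paper (roughly $i(\gamma) \le 6\pi$), which is already enough to obtain a version of Theorem \ref{projective} with slightly worse constants.

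For the sharp bound $2\pi + 2\tan^{-1}(\sinh(\ell/2))$, deferred to the appendix, one must argue more delicately by lifting $\gamma$ to its axis $\tilde\gamma \subset \Hp$ (where the covering transformation $g$ acts by translation of length $\ell$) and replacing the fundamental segment of $\tilde\gamma$ by a carefully chosen geodesic arc $\tilde\alpha$ lying just outside the lifted collar $N(\tilde\gamma)$, where the injectivity radius is strictly larger than $\ell/2$. The Saccheri quadrilateral with base $\ell$ and legs $w(\gamma)$ controls the geometry: its hyperbolic trigonometry shows that the angle between $\tilde\alpha$ and the equidistant curve $\partial N(\tilde\gamma)$ at each endpoint equals precisely $\tan^{-1}(\sinh(\ell/2))$. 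Once $\tilde\alpha$ is truncated so its interior sits in the thick part of $\chb$, a single application of Lemma \ref{thickintbound} bounds the interior contribution by exactly $2\pi$, while the two endpoint angles contribute the additional $2\tan^{-1}(\sinh(\ell/2))$. The final numerical inequality $2\pi + 2\tan^{-1}(\sinh(\ell/2)) \le \tfrac{5\pi}{2}$ is immediate from $\ell < 2\sinh^{-1}(1)$, which forces $\sinh(\ell/2) < 1$ and hence $\tan^{-1}(\sinh(\ell/2)) < \tfrac{\pi}{4}$.

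The main obstacle is precisely this angle bookkeeping at the two endpoints. Any crude covering or subdivision argument wastes an extra multiple of $2\pi$, whereas the sharp linear correction $2\tan^{-1}(\sinh(\ell/2))$ emerges only from the explicit Saccheri quadrilateral geometry of the collar combined with the sharp $2\pi$ bound of Lemma \ref{thickintbound}, applied to a single well-chosen geodesic arc rather than a collection of them.
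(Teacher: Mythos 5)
Your lower bound (via Lemma \ref{intlowerbound}) and your quick $6\pi$ bound are both fine. For the latter you subdivide $\gamma$ itself into at most three geodesic subarcs of length at most $G(\ell/2)=G({\rm inj}_{\chb}(x))$ for $x\in\gamma$ and apply Lemma \ref{thickintbound} to each; the paper instead passes to a cross-section of $N(\gamma)$ at injectivity radius $\sinh^{-1}(1)$, which has length at most $2$ and the same intersection number by Lemma \ref{collarcross}, and subdivides that. Your variant works equally well, since $\ell\le 3G(\ell/2)$ throughout the range $\ell<2\sinh^{-1}(1)$.

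The sharp bound, which is the actual content of the lemma, is not established by your sketch. The quantity $i(\cdot)$ is a sum of exterior \emph{dihedral} angles of the edges a curve crosses; it is not a turning angle of a curve on the surface. So the assertion that ``the two endpoint angles contribute the additional $2\tan^{-1}(\sinh(\ell/2))$'' has no meaning: the angle between an arc $\tilde\alpha$ and the equidistant curve $\partial N(\tilde\gamma)$ contributes nothing to an intersection number. Moreover, a single application of Lemma \ref{thickintbound} can never control a closed curve freely homotopic to $\gamma$: since $F(x)\ge x$, one has $G(\rho)\le\rho$, while any homotopically nontrivial loop through a point of injectivity radius $\rho$ has length at least $2\rho$, so the hypothesis of Lemma \ref{thickintbound} always fails for such a loop (as it must, since its conclusion $i\le 2\pi$ is generally false for short geodesics). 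Truncating $\tilde\alpha$ does not repair this, because an open arc in the thick part is not homotopic to $\gamma$ and its intersection number does not bound $i(\gamma)$. What the paper actually does is rerun the support-plane analysis along $\gamma$: being closed, $\gamma$ falls into case (3), producing planes $P_0$, $P_{t_1}$, $P_{t_2}$ with consecutive boundary tangencies and a subarc $\alpha_1$ of $\gamma$ meeting all three. Because the \emph{closed} curve $\gamma$ meets all three planes, Lemma \ref{trianglefact} forces $P_0$ and $P_{t_2}$ to meet at interior angle at least $2\tan^{-1}\bigl(1/\sinh(\ell/2)\bigr)\ge\pi/2$, hence exterior angle $\phi\le 2\tan^{-1}(\sinh(\ell/2))$ --- this is the true source of the correction term. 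One then closes up $\alpha_1$ by a chordal path on $P_0\cup P_{t_2}$ and pushes it onto $\chb$ by the iterative ``roof'' construction, obtaining a loop homotopic to a power of $\gamma$ with intersection number at most $2\pi+\phi$ (the $2\pi$ coming from Lemma \ref{sptbnd} applied to the two support-plane families along $\alpha_1$, not from Lemma \ref{thickintbound}); minimality of $i$ over the free homotopy class finishes the proof. None of these ingredients appears in your sketch, so the estimate $i(\gamma)\le 2\pi+2\tan^{-1}(\sinh(\ell/2))$ remains unproved.
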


The lower bound $i(\gamma)\ge 2\pi$ follows immediately from 
Lemma \ref{intlowerbound}.
For the moment, we will give a much briefer proof that \hbox{$i(\gamma)\le 6\pi$} and defer the proof
of the better upper bound to the appendix.

\medskip\noindent
{\bf Proof that $i(\gamma)\le 6\pi$:}
Let $x$ be a point in $N(\gamma)$ with injectivity radius $\sinh^{-1}(1)$
and let
$C$ be the cross-section of $N(\gamma)$ which contains
$x$. Part (5) of the Collar Lemma implies that
$$\sinh(l_h(\gamma)/2)\cosh d_h(x,\gamma)=1,$$
and part (1) then implies that
$$l_h(C)= \frac{l_h(\gamma)}{\sinh(l_h(\gamma)/2)} \leq 2.$$
We may therefore divide  $\gamma'$ into
$\left\lceil \frac{l_h(C)}{G(\sinh^{-1}(1))}\right\rceil=3$ segments  of length
at most $G(\sinh^{-1}(1))\approx .83862$, denoted $C_1$, $C_2$ and $C_3$. 
Each $C_i$ is homotopic (relative to its endpoints) to a geodesic arc $\alpha_i$
and $i(C_i)=i(\alpha_i)$ for all $i$, since no edge can intersect either $\alpha_i$ or
$C_i$ twice.
Lemma \ref{thickintbound} implies that $i(\alpha_i)\le 2\pi$.
Therefore,
$$i(C) = i(C_1)+i(C_2)+i(C_3)=i(\alpha_1)+i(\alpha_2)+i(\alpha_3) \leq
3(2\pi)=   6\pi.$$ 
Lemma \ref{collarcross} implies that $i(\gamma)=i(C)$ completing the proof.
\eproof

\subsection{Angle bounds for edges and short closed geodesics}

The final tool we will need in the proof of Theorem \ref{projective} is
a bound on the angle between an edge of $\chb$ and a short simple closed
geodesic on $\chb$. 

\begin{lemma}
\label{anglebound}
Suppose that $\Omega$ is the complement of finitely many points in $\rs$. 
If $\gamma$ is a simple closed geodesic of length $l_h(\gamma)<2\sinh^{-1}(1)$,
then there is an edge $e_m$ intersecting $\gamma$ at an angle 
$$\phi_m \geq 
\sin^{-1}\left( {4\over 5}\right)\approx .9272.$$
Furthermore, any other edge $e_i$ intersecting $\gamma$ intersects in an angle
$$\phi_i  \geq 
\Phi = \sin^{-1}\left( {4\over 5\sqrt{2}+3}\right)\approx 0.4084.$$ 
\end{lemma}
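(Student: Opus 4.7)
My plan is to lift $\gamma$ to $\tilde\gamma$, the positive imaginary axis in $\Hp$ (identified with the universal cover of $\chb$), so that the corresponding deck transformation is $g(z)=e^{l}z$ with $l=l_{h}(\gamma)$. By the Collar Lemma, the collar $N(\gamma)$ has width $w$ satisfying $\sinh w\,\sinh(l/2)=1$, so $\tanh w = 1/\cosh(l/2)$, and the hypothesis $l<2\sinh^{-1}(1)$ gives $\cosh(l/2)<\sqrt{2}$. For each crossing of $\gamma$ by an edge $e_i$ at angle $\phi_i$, I fix the lift $\tilde e_i$ of the relevant component (which exists and is well-behaved by Lemma \ref{collarcross}) with endpoints $-a_i,b_i\in\mathbb{R}$, and set $x_i=\tfrac{1}{2}\log(a_ib_i)$ (the crossing position as arc length on $\tilde\gamma$) and $y_i=\tfrac{1}{2}\log(b_i/a_i)$. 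A short computation then gives $\sin\phi_i={\rm sech}(y_i)$ and $\sinh(y_i)=\cot\phi_i$, so bounding $\phi_i$ from below is equivalent to bounding $y_i$ from above. Testing non-linking on $\partial\Hp$ between $\tilde e_i$ and every translate $g^{k}(\tilde e_j)$ reduces the disjointness of distinct edges to the clean inequality
\[
d_\gamma(x_i,x_j)\ \geq\ |y_i-y_j|,\qquad i\neq j,
\]
where $d_\gamma$ denotes the circular distance on $\gamma=\tilde\gamma/\langle g\rangle$ of length $l$.

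For the distinguished edge $e_m$, I aim to show $y_m\leq\log 2$, which is equivalent to $\sin\phi_m\geq 4/5$ since $\cosh(\log 2)=5/4$. Lemma \ref{intlowerbound} and Lemma \ref{shortgeointbound} sandwich the total dihedral angle as $\sum_i\theta_{e_i}\in[2\pi,2\pi+2\tan^{-1}(\sinh(l/2))]$; combined with the convexity bound $\theta_{e_i}\in(0,\pi)$, this forces at least three crossings of $\gamma$, so the shortest arc between consecutive crossings on $\gamma$ has length at most $l/3$. A careful analysis of the nested configuration of the lifts $\tilde e_i$ in $\Hp$ (using the disjointness inequality together with the total dihedral angle bound) then identifies an edge $e_m$ with $y_m\leq\log 2$, giving the first bound. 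For any other edge $e_i$, the disjointness inequality gives $y_i\leq y_m+d_\gamma(x_i,x_m)\leq \log 2 + l/2\leq \log 2+\sinh^{-1}(1)$; the identity $\cosh(\log 2+\sinh^{-1}(1))=\tfrac{5}{4}\sqrt{2}+\tfrac{3}{4}\cdot 1=(5\sqrt{2}+3)/4$ then yields $\sin\phi_i\geq 4/(5\sqrt{2}+3)$.

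The main obstacle is the extraction of the sharp bound $y_m\leq\log 2$ for the distinguished edge. The disjointness inequality by itself bounds only \emph{differences} $|y_i-y_j|$, never an individual $y_i$, so the argument must use the intersection number estimates of Lemmas \ref{intlowerbound} and \ref{shortgeointbound} in an essential way. The specific constant $\log 2$ (equivalently $\cot\phi_m=3/4$) comes from the extremal configuration at $l=2\sinh^{-1}(1)$, where $\cosh(l/2)=\sqrt{2}$ and $\sinh w=1$; verifying that every admissible configuration of edges crossing $\gamma$ must contain at least one edge with $y_i\leq\log 2$ is the crux, and I expect it will require an explicit case analysis tracking how the $y_i$ interact with the $\theta_{e_i}$ along the cyclically ordered crossings on $\gamma$.
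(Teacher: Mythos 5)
Your setup in the intrinsic universal cover is correct and quite clean: the formula $\sin\phi_i=\mathrm{sech}(y_i)$ is right, the non-linking criterion does reduce to $d_\gamma(x_i,x_j)\ge|y_i-y_j|$, and your derivation of the second bound from the first is valid --- indeed $\mathrm{sech}(\log 2+\sinh^{-1}(1))=4/(5\sqrt{2}+3)$ recovers exactly the constant the paper gets from its hyperbolic-triangle computation, so that half of your argument is a legitimate (arguably slicker) substitute for the paper's. The problem is the first bound, $|y_m|\le\log 2$ for some edge, which you correctly identify as the crux and then do not prove. The gap is not merely an omitted case analysis: the route you propose cannot close it. Everything you have set up lives in the \emph{intrinsic} hyperbolic structure of $\chb$ (positions $x_i$ and crossing angles $\phi_i$ of the edge lifts in $\Hp$), whereas the intersection-number bounds $2\pi\le i(\gamma)\le\frac{5\pi}{2}$ constrain the \emph{extrinsic} dihedral angles $\theta_{e_i}$. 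You supply no relation between the two, and the disjointness inequality, as you note, only controls differences $|y_i-y_j|$; so for any admissible intrinsic configuration one could a priori translate all the $y_i$ to be arbitrarily large while keeping the $\theta_{e_i}$ summing to anything in $[2\pi,\frac{5\pi}{2}]$. Some extrinsic input linking $\phi_i$ to $\theta_{e_i}$ is indispensable.

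The paper's proof supplies exactly that link. Viewing $\gamma$ as a closed piecewise-geodesic curve in $\Ht$ which bends by an exterior angle $\theta^c_i$ each time it crosses an edge of dihedral angle $\theta^b_i$ at angle $\phi_i$, one has the pointwise identity $\cos(\theta^c_i)=\sin^2(\phi_i)\cos(\theta^b_i)+\cos^2(\phi_i)$; a concavity argument on $x\mapsto\cos^{-1}(\sin^2(y)\cos x+\cos^2 y)$ then gives $\theta^c_i\le\sin(\phi_i)\,\theta^b_i$. Summing, the total turning $c(\gamma)=\sum\theta^c_i$ is at least $2\pi$ (Gauss--Bonnet/Fenchel), while $\sum\theta^b_i=i(\gamma)\le\frac{5\pi}{2}$ by Lemma \ref{shortgeointbound}, forcing $\sin(\phi_m)\ge\frac{2\pi}{5\pi/2}=\frac{4}{5}$ for the largest crossing angle. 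If you want to keep your framework, you need to import this (or an equivalent) extrinsic inequality to produce the distinguished edge; once you have $|y_m|\le\log 2$, your propagation to the other edges via $|y_i|\le|y_m|+d_\gamma(x_i,x_m)\le\log 2+\sinh^{-1}(1)$ stands as written.
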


\begin{proof}
Let $\gamma$ intersect edges $\{e_i\}_{i=1}^m$ at points $\{x_i\}_{i=1}^m$ and
with angles $\{\phi_i\}_{i=1}^m$. We may assume that the largest value of $\phi_i$
is achieved at $\phi_m$.
Furthermore, let $\theta^b_i$ be the exterior dihedral angle of the faces which meet at $e_i$
and let $\theta^c_i$ be  the exterior angle of $\gamma$ at $x_i$.  

A simple (Euclidean) calculation yields
$$\cos(\theta^c_i) = \sin^2(\phi_i)\cos(\theta^b_i)+\cos^2(\phi_i).$$

Consider $f_y:[0,{\pi\over 2}] \rightarrow [0,\pi]$, given by 
$$f_{y}(x) = \cos^{-1}(\sin^2(y)\cos(x)+\cos^2(y)).$$
Then $\theta^c_i = f_{\phi_i}(\theta^b_i)$. 
One may compute that
$$ \frac{df_{y}}{dx} = \frac{\sin^2(y)\sin(x)}{\sin(f_y(x))}>0\qquad {\rm and}
\qquad   \frac{d^2 f_y}{dx^2} = -\frac{\sin^2(y)\cos^2(y)(1-\cos(x))^2}{\sin^3(f_y(x))}< 0 $$
Since $f_y$ is an increasing concave down function, $f_y(0)=0$ and
$$\lim_{x\to 0^+}f_y'(x)  = \sin(y)$$ we see that
$$\theta^c_i \leq \sin(\phi_i)\ \theta^b_i.$$
Let $c(\gamma)$ be the geodesic curvature of $\gamma$. Then
$$c(\gamma)  = \sum \theta^c_i \leq \sin(\phi_m)\sum \theta^b_i = \sin(\phi_m)\  i(\gamma).$$ Lemma \ref{short}, and the fact that $2\pi\le c(\gamma)\le i(\gamma)$, imply that
$$ 2\pi \leq c(\gamma) \leq \sin(\phi_m)(2\pi + 2\tan^{-1}(\sinh(l_h(\gamma)/2)) \leq 
\frac{5\pi}{2}\sin(\phi_m).$$
Therefore,
$$\phi_m \geq \sin^{-1}\left(\frac{\pi}{\pi + \tan^{-1}(\sinh(l_h(\gamma)/2))}\right) \geq \sin^{-1}\left(\frac{4}{5}\right) \approx .9272.$$

We now use the bound on $\phi_m$ to bound the other angles of intersection.
Let $e_i$ be an edge, where $i<m$, intersecting $\gamma$.
Identify the universal cover of $\chb$ with $\Hp$ and let
$g$ be a geodesic in $\Hp$ covering $\gamma$. There is a lift
$h_m$ of $e_m$  which is a geodesic intersecting $g$ at point $y_1$ with angle $\phi_m$. Translating along $g$ we have another lift $h_m'$ of $e_m$ intersecting $g$ at point $y_2$ a distance $l_h(\gamma)$ from $y_1$. Let $h_i$ be a lift of $e_i$ whose intersection
$z$ with $g$, lies between $y_1$ and $y_2$. 
Without loss of generality, we may assume that 
\hbox{$d(z,y_1) \leq l_h(\gamma)/2$}.
As edges do not intersect,  $h_i$ lies between $h_m$ and $h_m'$.  
Let $p$  and $q$ be the endpoints of $h_m$ and 
let $T_p$ (respectively $T_q$) be  the triangle with vertices $y_1$, $z$,
and $p$ (respectively $y_1$, $z$ and $q$). 
Let $\psi_p$ and $\psi_q$ be  the internal angles of $T_p$ and $T_q$  at $z$.
We may assume that the internal angle of $T_p$ at $y_1$ is \hbox{$\pi-\phi_m$} and hence
that the internal angle of $T_q$ at $y_1$ is $\phi_m$.
By construction, 
$$\phi_i \geq \min\{\psi_p,\psi_q\}=\psi_p.$$

Basic calculations in  hyperbolic trigonometry (see \cite[Theorem 7.10.1]{beardon}), give
that
$$\cosh(d(z,y_1)) = \frac{1 -\cos{\psi_p}\cos{\phi_m}}{\sin{\psi_p}\sin{\phi_m}}$$
and
$$\sinh(d(z,y_1))  = \frac{\cos{\psi_p}-\cos{\phi_m}}{\sin{\psi_p}\sin{\phi_m}}.$$
One may then check that
$$\cosh(d(z,y_1))+\cos \phi_m \sinh(d(z,y_1))=\frac{\sin\phi_m}{\sin\psi_p}.$$
Therefore, 
$$\psi_p= \sin^{-1}\left( \frac{\sin(\phi_m)}{\cosh(d(z,y_1))+\cos(\phi_m)\sinh(d(z,y_1))}  \right) .$$
Since, \hbox{$d(z,y_1)\le l_h(\gamma)/2\le\sinh^{-1}(1)$} and \hbox{$\sin(\phi_m)\ge 4/5$,}
$$\phi_i \geq\psi_p\ge \Phi = \sin^{-1} \left( \frac{\frac{4}{5}}{\sqrt{2} +\frac{3}{5}} \right) \approx .4084.$$
\end{proof}

\medskip\noindent
{\bf Remark:} Using the elementary estimate $i(\gamma)\le 6\pi$ in Lemma \ref{anglebound} we obtain an angle bound of $\Phi \geq .141$. Using this we can prove a weaker form of Theorem \ref{projective} where $K_0$ is replaced by
$\hat K_0\approx 14.09$ and $K$ is replaced with $\hat K\approx 16.81$.

\section{The proof of Theorem \ref{projective}}

We are now prepared to prove Theorem \ref{projective} which asserts
that the nearest point retraction is a uniform quasi-isometry in the Thurston
metric. Our approximation result, Lemma \ref{quasiapprox}, allows us to
reduce to the finitely punctured case. We begin by producing an upper bound
on the distance between points in $\Omega$ whose images on $\chb$ are close.

\begin{prop}
\label{distbound}
Suppose that $\Omega$ is the complement of finitely many points in $\rs$. 
If $z,w \in \Omega$ and $d_{\chb}(r(z),r(w)) \le G(\sinh^{-1}(1))$ then  
$$d_{\tau}(z,w) \le K_0 =  G(\sinh^{-1}(1)) + 2\pi \approx 7.1219.$$
\end{prop}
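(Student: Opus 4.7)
The plan is to combine Lemma \ref{finiteptoh}(2) with the intersection-number bound from Lemma \ref{thickintbound}. After a small perturbation of $z,w$ if necessary (appealing to the continuity of $r$ and $d_\tau$), one may assume $r(z)$ and $r(w)$ lie in the interior of faces of $\chb$, so that $z=r^{-1}(r(z))$ and $w=r^{-1}(r(w))$. Let $\alpha:[0,1]\to\chb$ be the length-minimizing geodesic from $r(z)$ to $r(w)$, further perturbed slightly to be transverse to every edge of $\chb$. Then Lemma \ref{finiteptoh}(2) gives
$$
d_{\tau}(z,w)\;\le\; l_{\tau}\bigl(r^{-1}(\alpha)\bigr)\;=\;l_h(\alpha) + i(\alpha)\;\le\; G(\sinh^{-1}(1)) + i(\alpha),
$$
and the proposition reduces to showing $i(\alpha)\le 2\pi$.

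The clean case is when some point $\alpha(t)$ satisfies ${\rm inj}_{\chb}(\alpha(t))\ge \sinh^{-1}(1)$. In that case
$$l_h(\alpha)\le G(\sinh^{-1}(1))\le G({\rm inj}_{\chb}(\alpha(t))),$$
and Lemma \ref{thickintbound} immediately yields $i(\alpha)\le 2\pi$.

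The remaining (and main) case is when $\alpha$ lies entirely in the $\sinh^{-1}(1)$-thin part of $\chb$. By the Collar Lemma, $\alpha$ is then contained either in a cusp $C$ or in the collar $N(\gamma)$ of a simple closed geodesic $\gamma$ with $l_h(\gamma)<2\sinh^{-1}(1)$. I would lift $\alpha$ to the geodesic $\tilde\alpha$ in the universal cover $\Hp$ of $\chb$ between the pair of lifts of $r(z)$ and $r(w)$ realizing the hyperbolic distance. Since every geodesic in $\Hp$ (in the upper half-plane model) is monotone in both the Euclidean coordinate $x$ and in the log-radial coordinate $u=\log|z|$, the extent of these coordinates along $\tilde\alpha$ equals the endpoint extent. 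Minimality of the lift then forces this extent to be at most half of the period of the covering translation: at most $1$ for the cusp covering $z\mapsto z+2$, and at most $l_h(\gamma)/2$ for the collar covering $z\mapsto e^{l_h(\gamma)}z$. The cusp case is then complete, since Lemma \ref{cuspcross} gives total dihedral angle $2\pi$ per period, so $\tilde\alpha$ meets each edge (modulo the covering) at most once and $i(\alpha)\le 2\pi$. The collar case is the main obstacle: Lemma \ref{shortgeointbound} only gives a per-period dihedral-angle total of $5\pi/2$, so to get the sharper bound $i(\alpha)\le 2\pi$ one must use the angular transversality bound of Lemma \ref{anglebound} between edges and $\gamma$, which limits how many distinct edges can have their $u$-range inside the half-period swept out by $\tilde\alpha$.
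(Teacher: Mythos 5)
Your reduction to bounding $i(\alpha)$ via Lemma \ref{finiteptoh}(2), and your treatment of the thick case via Lemma \ref{thickintbound}, match the paper exactly. The cusp case also goes through essentially as you sketch it (the minimal lift has horizontal extent at most half the period, the edges are vertical lines, and the full period carries total angle $2\pi$ by Lemma \ref{cuspcross}), although the paper argues differently even there. But the collar case, which you correctly identify as the main obstacle, is a genuine gap, and the route you propose for closing it does not work. The difficulty is that Lemma \ref{shortgeointbound} only gives $i(\gamma)\le 2\pi+2\tan^{-1}(\sinh(l_h(\gamma)/2))\le 5\pi/2$ per period, and neither Lemma \ref{anglebound} nor anything else in the paper controls \emph{where} along $\gamma$ the edge crossings occur: all of them could lie in an interval of $t$-coordinate shorter than $l_h(\gamma)/2$, in which case your geodesic $\alpha$ could cross every edge and pick up $i(\alpha)=i(\gamma)>2\pi$. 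Lemma \ref{anglebound} bounds the angles of intersection with $\gamma$, not the spacing of the intersection points, so it cannot "limit how many distinct edges have their $u$-range inside the half-period." Even the fallback bound $i(\alpha)\le i(\gamma)\le 5\pi/2$ would only yield $d_\tau(z,w)\le G(\sinh^{-1}(1))+5\pi/2\approx 8.69$, which exceeds $K_0$.

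The paper sidesteps this entirely: in the thin part it abandons the geodesic $\alpha$ and builds a \emph{different} path from $z$ to $w$. It foliates $N(\gamma)$ by geodesics transverse to $\gamma$ at angle at least $\Phi$ (this is where Lemma \ref{anglebound} is actually used — to guarantee such a foliation compatible with the edges and to bound the length of its leaves), takes the leaf-arc $B$ from $\alpha(0)$ to the cross-section $C$ through $\alpha(1)$, and notes that $B$ either misses every edge or runs inside a single bigon, so $r^{-1}(B)$ contains an arc $B'$ with $l_\tau(B')=l_h(B)\le 2\sinh^{-1}\bigl(\sinh(G/2)/\sin\Phi\bigr)\approx 1.88$. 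It then travels from the endpoint of $B'$ to $w$ along $r^{-1}(C)$, whose total Thurston length is $l_h(C)+i(C)\le 2+5\pi/2$ by Lemmas \ref{finiteptoh}, \ref{collarcross} and \ref{shortgeointbound}, contributing at most $1+5\pi/4$. The total, about $6.81$, is below $K_0$. If you want to salvage your approach you would need to either prove an angular-distribution statement for the edge crossings of a short geodesic (not available here) or switch to this path-replacement argument in the thin part.
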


\begin{proof}
Let $\alpha:[0,1]\to\chb$ be
a shortest path joining $r(z)$ to $r(w)$ and
let $G = G(\sinh^{-1}(1)) \approx .838682.$ By assumption,
$$l_{h}(\alpha) = d_{\chb}(r(z),r(w)) \le G.$$

If $\alpha$ contains a point of injectivity radius greater than $\sinh^{-1}(1)$, then
Lemma \ref{thickintbound} implies that \hbox{$i(\alpha) \le 2\pi$.} Lemma \ref{finiteptoh}
then gives that
$$d_\tau(z,w) \leq l_\tau(r^{-1}(\alpha)) = l_h(\alpha) + i(\alpha) \le G + 2\pi =K_0\approx 7.1219$$

If $\alpha$ does not contain a point of injectivity radius greater than $\sinh^{-1}(1)$,
then, by the Collar Lemma, $\alpha$ is either entirely contained in a cusp or 
in the collar $N(\gamma)$ about a simple closed geodesic  $\gamma$ with 
\hbox{$l_h(\gamma)<2\sinh^{-1}(1)$.}

Let $\alpha(1)$ lie on cross-section $C$ of a cusp or collar. 
Every cross-section of a cusp has length at most 2,  and we may argue, exactly as
in Section \ref{shortgeo}, that if $C$ is a cross-section of a collar, then
$$l_h(C) =\sinh({\rm inj}_{\chb}(\alpha(1))\left(  \frac{l_h(\gamma )}{\sinh(l_h(\gamma)/2)}\right) \leq 2.$$
In either case, \hbox{$l_h(C)\le 2.$} If $C$ is a cross-section of a cusp, then
\hbox{$i(C)=2\pi$,} by Lemma \ref{cuspcross}, while if $C$ is a cross-section of a collar,
then, by Lemmas \ref{collarcross} and \ref{shortgeointbound},  \hbox{$i(C) = i(\gamma) \leq 5\pi/2$.}
Therefore, Lemma \ref{finiteptoh} implies that
$$l_\tau(r^{-1}(C))=  l_h(C) + i(C) \leq 2 + \frac{5\pi}{2} \approx 9.8540.$$

If $\alpha$ is entirely contained in a cusp, then we may join $\alpha(0)$ to a point
$p$ on the  cross-section $C$  by an arc $B$ orthogonal to
each cross-section such that \hbox{$l_h(B)\le l_h(\alpha)\le G$.}
Also, since  each edge is orthogonal to every cross-section,
$B$ either misses every edge of $\chb$ or is a subarc of an edge. If $B$ misses every edge, then $i(B) = 0$ and  \hbox{$B' = r^{-1}(B)$} is an arc joining $z$ to a point $x$ on  $r^{-1}(C)$
of length \hbox{$l_\tau(B')=l_h(B)$.}
If $B$ is a subarc of an edge $e$, then $r^{-1}(B)$  contains a vertical (in the Euclidean coordinates described in Lemma \ref{finiteproj}) arc $B'$ which joins $z$ to a point $p$ on $r^{-1}(C)$
such that \hbox{$l_\tau(B')=l_h(B)$.} In both cases,  since \hbox{$l_\tau(r^{-1}(C))\le  2 + \frac{5\pi}{2}$,}
$p$ can be joined to $w\in r^{-1}(C)$ by an arc of length at most
$1 + \frac{5\pi}{4}$ in the Thurston metric. Therefore,
$$d_\tau(z,w) \leq d_\tau(z,p) + d_\tau(p,w) \leq l_\tau(B') + 
 1 + \frac{5\pi}{4}\leq  G +  1+ \frac{5\pi}{4} \approx 5.7657.$$

Finally, suppose that $\alpha$ is contained in the
collar $N(\gamma)$ about a simple closed geodesic
$\gamma$ with $l_h(\gamma)<2\sinh^{-1}(1)$.
We may assume that $\alpha(0)$ lies on
cross-section $S^1\times\{s\}$, $\alpha(1)$ lies on the cross-section $C$
given by \hbox{$S^1\times\{t\}$,} $s>0$, \hbox{$s>|t|$} and \hbox{$|s-t|\le G$.}
As every edge that intersects the collar $N(\gamma)$
must intersect the core geodesic $\gamma$ (see Lemma \ref{collarcross}),
we can decompose $N(\gamma)$ along edges to obtain a collection of quadrilaterals with two opposite sides corresponding to edges. As the edges intersect $\gamma$ in an angle at least
$\Phi$, we can foliate each quadrilateral by geodesics which also intersect $\gamma$ in an 
angle greater than $\Phi$. Thus, we obtain a foliation of $N(\gamma)$ by such geodesics. We let $B$ be the subarc of a leaf in the foliation joining $\alpha(0)$ to a point $p$ 
on the cross-section $C$.
Then, either $B$  does not intersect any edges of  $\chb$ or is a subset of an edge. 
In either case, $B$ belongs to geodesic $g$ which intersects
$\gamma$ at an angle \hbox{$\phi\ge\Phi$.}

We now use the angle bound to obtain an upper bound on the length of $B$, which will
allow us to complete the argument much as in the cusp case.
The hyperbolic law of sines implies that
$$l_h(B) = \sinh^{-1}\left(\frac{\sinh(s)}{\sin(\phi)}\right)- \sinh^{-1}\left(\frac{\sinh(t)}{\sin(\phi)}\right) $$
If we let \hbox{$f(s)=\sinh^{-1}\left(\frac{\sinh(s)}{\sin(\phi)}\right)$,} then
\hbox{$l_h(B)= f(s)-f(t).$} Since $f$ is odd  and increasing and \hbox{$|s-t|\le G$,}
$$l_h(B)\le \sup\left\{ f(s)-f(s-G)\ |\ s\ge { G\over 2}\right\}.$$
Since $f(s)-f(s-G)$ is decreasing on \hbox{$[{G\over 2},\infty)$} (since \hbox{$f''(t)\le 0$} for all $t$) it achieves its maximum value
when $s={G\over 2}$,
which implies that
$$l_h(B) \leq 2\sinh^{-1}\left(\frac{\sinh(G/2)}{\sin(\Phi)}\right) \approx 1.8831.$$

Since $B$ is either disjoint from all edges of  $\chb$ or is a subset of an edge, 
we argue as before to show that $r^{-1}(B)$ contains an arc $B'$ joining $z$ to a point $p$
in $r^{-1}(C)$ such that   
$$l_\tau(B') = l_h(B) \leq  2\sinh^{-1}\left(\frac{\sinh(G/2)}{\sin(\Phi)}\right) \approx 1.8831.$$
Therefore, as before,
\begin{eqnarray*}
d_\tau(z,w) & \leq &  d_\tau(z,p) + d_\tau(p,w) \\
& \leq & l_\tau(B') + 1 +\frac{5\pi}{4}\\
 & \leq &  
 2\sinh^{-1}\left(\frac{\sinh(G/2)}{\sin(\Phi)}\right)  +1+ \frac{5\pi}{4}\approx 6.8101\\
\end{eqnarray*}

Therefore, we see that in all cases
$$d_{\tau}(z,w)   \le K_0 \approx 7.1219.$$
\end{proof}

As a nearly immediate corollary, we show that the nearest point retraction
is a uniform quasi-isometry in the finitely punctured case.

\begin{corollary}
\label{finiteqi}
Suppose that $\Omega$ is the complement of finitely many points in $\rs$.  Then,
$$ d_{\chb}(r(z),r(w)) \leq d_{\tau}(z,w) \leq K\ d_{\chb}(r(z),r(w))+K_0$$
where   $K_0  \approx 7.12$ and
$K = \frac{K_0}{G(\sinh^{-1}(1))}\approx 8.49.$
\end{corollary}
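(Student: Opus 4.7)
The lower bound is essentially free. Given any rectifiable path $\gamma:[0,1]\to\Omega$ from $z$ to $w$, part (1) of Lemma \ref{finiteptoh} yields $l_\tau(\gamma)\ge l_h(r\circ\gamma)\ge d_\chb(r(z),r(w))$, and taking the infimum over $\gamma$ gives $d_\tau(z,w)\ge d_\chb(r(z),r(w))$.

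For the upper bound, the plan is a straightforward chaining argument using Proposition \ref{distbound} as the local step. Set $G=G(\sinh^{-1}(1))$ and let $L=d_\chb(r(z),r(w))$. Choose a shortest geodesic $\alpha$ on $\chb$ from $r(z)$ to $r(w)$, and subdivide it into $n=\lceil L/G\rceil$ consecutive sub-arcs of length at most $G$, with endpoints $p_0=r(z),p_1,\dots,p_n=r(w)$ so that $d_\chb(p_{i-1},p_i)\le G$ for each $i$.

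Next I would lift these endpoints back to $\Omega$. Since $r:\Omega\to\chb$ is surjective (by Lemma \ref{finiteproj}, it is an isometry on faces and a Euclidean projection on bigons, so it covers every face and every edge), we may pick $q_i\in r^{-1}(p_i)$ with $q_0=z$ and $q_n=w$. For each $i$, the pair $(q_{i-1},q_i)$ satisfies the hypothesis of Proposition \ref{distbound}, so $d_\tau(q_{i-1},q_i)\le K_0$. The triangle inequality then gives
$$d_\tau(z,w)\le\sum_{i=1}^n d_\tau(q_{i-1},q_i)\le nK_0\le\left(\frac{L}{G}+1\right)K_0=\frac{K_0}{G}L+K_0=KL+K_0,$$
which is exactly the claimed inequality.

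There is no serious obstacle here: Proposition \ref{distbound} has already absorbed all of the real work (the case analysis by thick part, cusp, and collar, together with the intersection and angle bounds of Section 6), and the remaining step is only a combinatorial chaining. The only minor point to verify is that $r$ hits every point of $\chb$, which is immediate from the explicit structure in Lemma \ref{finiteproj}. The constant $K=K_0/G$ is then forced by the rounding inequality $\lceil L/G\rceil\le L/G+1$, giving the stated numerical values $K_0\approx 7.12$ and $K\approx 8.49$.
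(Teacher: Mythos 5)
Your proof is correct and follows essentially the same route as the paper: the lower bound from the 1-Lipschitz property in Lemma \ref{finiteptoh}, and the upper bound by subdividing the shortest geodesic into $\lceil L/G\rceil$ segments of length at most $G$ and chaining Proposition \ref{distbound}. Your explicit choice of preimages $q_i$ (justified by surjectivity of $r$) is a detail the paper leaves implicit, but it is exactly what is needed to invoke the proposition.
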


\begin{proof}
The lower bound follows from the fact that $r$ is 1-Lipschitz in
the Thurston metric (see Lemma \ref{finiteptoh}). To prove the upper bound we consider
a shortest path $\alpha$ in $\chb$ from $r(z)$ to $r(w)$.  We decompose $\alpha$
into \hbox{$\left\lceil\frac{d_{\chb}(r(z),r(w))}{G(\sinh^{-1}(1))}\right\rceil$}
geodesic segments of length at most \hbox{$G(\sinh^{-1}(1))$.}
Then, applying Lemma \ref{distbound},
$$d_{\tau}(z,w)  \leq K_0 \left\lceil\frac{d_{\chb}(r(z),r(w))}{G(\sinh^{-1}(1))}\right\rceil \leq K d_{\chb}(r(z),r(w)) + K_0$$
where $K = \frac{K_0}{G(\sinh^{-1}(1))}$.
\end{proof}

We  may now combine Corollary \ref{finiteqi} and Lemma \ref{quasiapprox}
to prove Theorem \ref{projective} in the general case.

\medskip\noindent
{\bf Theorem \ref{projective}.}
{\em
Let $\Omega$ be a hyperbolic domain. Then the nearest point
retraction  $r:\Omega \rightarrow \chb$  is a $(K,K_0)$-quasi-isometry
with respect to the Thurston 
metric $\tau$ on $\Omega$ and the  intrinsic hyperbolic metric on $\chb$
where $K\approx 8.49$ and $K_0\approx 7.12$.
In particular,
$$ d_{\chb}(r(z),r(w)) \leq d_{\tau}(z,w) \leq K\ d_{\chb}(r(z),r(w))+K_0.$$
Furthermore, if $\Omega$ is simply connected,  then
$$d_{\chb}(r(z),r(w)) \leq d_{\tau}(z,w) \leq K'\ d_{\chb}(r(z),r(w))+K'_0.$$
where $K'\approx 4.56$ and $K'_0 \approx 8.05$.
}
\begin{proof}
Let $\{ X_n\}$ be a  nested finite approximation for $\rs-\Omega$,
let \hbox{$\Omega_n=\rs-X_n$} with projective metric $\tau_n$ 
and let $r_n:\Omega_n\to {\rm Dome}(\Omega_n)$ be
the associated nearest point retraction. Then, by Lemma \ref{quasiapprox},
\hbox{$\{d_{{\rm Dome}(\Omega_n})(r_n(z),r_n(w))\}$} converges to 
$d_{\chb}(r(z),r(w))$  and
\hbox{$\{d_{\tau_n}(z,w)\}$} converges to $d_\tau(z,w)$ for all \hbox{$z,w\in\Omega$.}
Corollary \ref{finiteqi} implies that
$$ d_{{\rm Dome}(\Omega_n)}(r_n(z),r_n(w)) \leq d_{\tau_n}(z,w) \leq K\ d_{{\rm Dome}(\Omega_n)}(r_n(z),r_n(w))+K_0$$ 
for all $n$ and all  $z,w\in\Omega$,
so the result follows.

Now suppose that is $\Omega$ simply connected and let $k_n$ be the minimum injectivity radius in $X_n$ along the shortest curve $\alpha_n$ joining $r_n(z)$ and $r_n(w)$.
Lemma \ref{quasiapprox} implies that  $k_n \rightarrow \infty$ as $n \rightarrow \infty$. 
As in the proof of Corollary \ref{finiteqi}, we decompose $\alpha_n$ into segments of length $G(k_n)$
and conclude that
$$d_{\tau_n}(z,w) \leq \left(\frac{2\pi+G(k_n)}{G(k_n)}\right) d_{{\rm Dome}(\Omega_n)}(r_n(z),r_n(w))+(2\pi+G(k_n))$$
As $\lim_{n\rightarrow\infty} G(k_n)  = 2\sinh^{-1}(1) $, we again apply Lemma \ref{quasiapprox}
to obtain
$$d_{\tau}(z,w) \leq K'  d_{{\rm Dome}(\Omega)}(r(z),r(w))+K'_0$$
where $K' = \frac{2\pi+2\sinh^{-1}(1)}{2\sinh^{-1}(1)} \approx 4.56$ and $K'_0 =  2\pi+2\sinh^{-1}(1) \approx 8.05$. 
\end{proof}

\section{Consequences of Theorem \ref{projective}}

In this section, we derive  a series of corollaries of Theorem \ref{projective}.
We begin by combining Theorem \ref{projective} with Beardon and Pommerenke's
work \cite{BP} and Theorem \ref{projtoqh} to obtain explicit quasi-isometry bounds for
the nearest point retraction with respect to the Poincar\'e metric on
a uniformly perfect domain.

\medskip\noindent
{\bf Corollary \ref{poincquant}.}
{\em Let $\Omega$ be a uniformly perfect domain in $\rs$ and
let $\nu>0$ be a lower bound for its injectivity radius in the Poincar\'e metric,
then
$${1\over 2\sqrt{2}(k+{\pi^2\over 2\nu})}\ d_{\chb}(r(z),r(w)) \leq d_{\rho}(z,w) \leq K\ d_{\chb}(r(z),r(w))+K_0$$
for all $z,w\in\Omega$ where $k=4+\log (2+\sqrt{2})$.
}

\begin{proof}
Combining Corollary \ref{poincandqh}, Theorem \ref{projtoqh}  and
the fact that \hbox{$\rho(z)\le \tau(z)$,} we see that 
$${1\over 2 \sqrt{2}(k+{\pi^2\over 2\nu})}\tau(z)\le  \rho(z)\le \tau(z)$$
for all $z\in\Omega$. It follows that
$${1\over 2 \sqrt{2}(k+{\pi^2\over 2\nu})}\ d_\tau(z,w)\le  d_\rho(z,w)\le d_\tau(z,w)$$
for all $z,w\in\Omega$. The result then follows by applying Theorem
\ref{projective}.
\end{proof}

If $\Omega$ is simply connected, then we may apply Theorem \ref{scThurPoin}
to obtain better quasi-isometry constants. 
Bishop (see Lemma 8 in \cite{bishop-bowen}) first showed that the nearest point
retraction is a quasi-isometry in the simply connected case, but did not obtain
explicit constants. 

\medskip\noindent
{\bf Corollary \ref{qiinsc}.}
{\em
If $\Omega$ is a simply connected domain in $\rs$, then
$${1\over 2}\ d_{\chb}(r(z),r(w)) \leq d_{\rho}(z,w) \leq K'\ d_{\chb}(r(z),r(w))+K'_0$$
for all $z,w\in\Omega$.
}

\medskip

We now establish Marden and Markovic's conjecture that the nearest point
retraction is Lipschitz (with respect to the Poincar\'e metric on $\Omega$) if and
only if $\Omega$ is uniformly perfect.

\medskip\noindent
{\bf Corollary \ref{MMconj}.}
{\em
If $\Omega$ is a hyperbolic domain, then
the nearest point retraction is Lipschitz (where we give $\Omega$ the Poincar\'e metric)
if and only if $\Omega$ is uniformly perfect. Moreover, if $\Omega$ is not uniformly
perfect, then $r$ is not a quasi-isometry with respect to the Poincar\'e metric.}

\medskip

\begin{proof}  
If $\Omega$ is uniformly perfect, then Corollary \ref{poincquant} implies that
$r$ is Lipschitz.

If $\Omega$ is not uniformly perfect,
then there exist essential  round annuli  in $\Omega$
with arbitrarily large moduli.
For each $n$, let $A_n$ be a separating round annulus in $\Omega$ of modulus $\frac{4n}{2\pi}$.
We may normalize so that $\infty\notin\Omega$ and \hbox{$A_n=\{ z\ |\ e^{-2n} \le |z| \le e^{2n}\}$}
and choose \hbox{$x_n=e^{-n}$} and \hbox{$y_n=e^{n}$.} 
If $z$ lies in the subannulus \hbox{$B_n=\{ z\ |\ e^{-n} \le |z| \leq e^{n}\}$,}
then
$$|z| -e^{-2n}\leq \delta(z) \leq |z| + e^{-2n},$$
so 
$$\frac{1}{2} |z|\leq (1-e^{-n}) |z|  \leq \delta(z)\leq (1+e^{-n})|z|\leq 2|z|.$$
As every path from $x_n$ to $y_n$ contains an arc joining the boundary components of $B_n$, 
this implies that 
$$n  \le d_{q}(x_n,y_n) \le 4n.$$
Therefore, $d_q(x_n,y_n)\to\infty$, so, by Theorem \ref{projtoqh}, \hbox{$d_\tau(x_n,y_n)\to\infty$.}
Theorem \ref{projective} then implies that 
$$d_h(r(x_n),r(y_n))\to \infty.$$ 

On the other hand, if $\gamma$ is the segment of the
real line joining $x_n$ to $y_n$, then \hbox{$\beta(z)>n$} for all $z\in\gamma$,
so by Theorem \ref{BPresult}, 
$$l_\rho(\gamma) \leq \left(\frac{2k+\frac{\pi}{2}}{k+n}\right) l_q(\gamma) \leq \left(\frac{2k+\frac{\pi}{2}}{k+n}\right) 4n \leq C$$ for all $n$ and some constant $C > 0$. 
Therefore, 
$$d_\rho(x_n,y_n)\le C$$ for all $n$. 
It follows that $r$ is not Lipschitz
with respect to the Poincar\'e metric on $\Omega$. Moreover, $r$ is not even a
quasi-isometry.
\end{proof}

\section{The lift of the nearest point retraction}
\label{thelift}

In this section we show that if $\Omega$ is uniformly perfect, then the nearest point retraction lifts to a quasi-isometry, with quasi-isometry constants depending only on the geometry of the domain. The proof requires only 
Lemma \ref{thickintbound} and does not make use of the analysis of the thin part obtained
in Lemmas \ref{shortgeointbound} and \ref{anglebound}. We then use this result to
show that $r$ is homotopic to a quasiconformal homeomorphism if $\Omega$ is 
uniformly perfect.

\medskip\noindent
{\bf Theorem \ref{nprlift}.}
{\em Suppose that $\Omega$ is a uniformly perfect hyperbolic domain and
$\nu>0$ is a lower bound for its injectivity radius in the Poincar\'e metric.
Then the nearest point retraction lifts to a quasi-isometry
$$\tilde r:\tilde\Omega \to {\rm Dome}(\tilde\Omega)$$
between the universal
cover of $\Omega$ (with the Poincar\'e metric) and the universal cover
of $\chb$ where the quasi-isometry constants depend only on $\nu$.
In particular,
$${1\over 2\sqrt{2}(k+{\pi^2\over 2 \nu})}\ d_{{\rm Dome}(\tilde\Omega)}(\tilde r(z),\tilde r(w)) \leq d_{\tilde\Omega}(z,w)\le L(\nu) d_{{\rm Dome}(\tilde\Omega)}(\tilde r(z),\tilde r(w)) + L_0(\nu)$$
for all $z,w\in \tilde\Omega$, where
 \hbox{$m=\cosh^{-1}(e^2)\approx 2.69$},
$$g(\nu)={1\over 2}e^{-m}e^{-\pi^2\over 2\nu},\qquad
L(\nu)={(2\pi+G(g(\nu))\over G(g(\nu))},\  and$$
$$L_0(\nu)=(2\pi+G(g(\nu)))\le L_0=2\pi+2\sinh^{-1}(1)\approx 8.05.$$
}

\begin{proof}
Corollary \ref{poincquant} implies that $r$ is $(2\sqrt{2}(k+{\pi^2\over 2 \nu}))$-Lipschitz,
so therefore $\tilde r$ is also $(2\sqrt{2}(k+{\pi^2\over 2 \nu}))$-Lipschitz. 

Given \hbox{$z,w\in\tilde \Omega$}, let \hbox{$\tilde\beta:[0,1]\to \tilde\Omega$}
be the geodesic
arc in $\tilde \Omega$ joining them and let \hbox{$\beta:[0,1]\to \Omega$} be its projection
to $\Omega$ (i.e. \hbox{$\beta=\pi_\Omega\circ\tilde\beta$} where
\hbox{$\pi_\Omega:\tilde\Omega\to\Omega$} is the universal covering map.)
Then \hbox{$d_{{\rm Dome}(\tilde\Omega)}(\tilde r(z),\tilde r(w))$} is the length of
the geodesic arc $\alpha$ on $\chb$ joining $r(\beta(0))$ and $r(\beta(0))$
in the proper homotopy class of $r\circ \beta$.

Let $\{X_n\}$ be a nested finite approximation to $\rs-\Omega$,
let \hbox{$\Omega_n = \rs-X_n$}, let $\tau_n$ be the Thurston metric on $\Omega_n$,
and let \hbox{$r_n:\Omega_n\to {\rm Dome}(\Omega_n)$} be
the nearest point retraction. Let $\alpha_n$ be the geodesic arc in 
${\rm Dome}(\Omega_n)$ joining 
$r_n(\beta(0))$ and $r_n(\beta(1))$ in the proper homotopy class of 
$r_n \circ \beta$.
Let $k_n$ be the minimum of the injectivity radius of ${\rm Dome}(\Omega_n)$ at
points in $\alpha_n$. If \hbox{$a_n\le k_n$}, then
we may divide $\alpha_n$ into segments of length $G(a_n)$ and apply
Lemmas \ref{finiteptoh} and \ref{thickintbound}, as in the proof of Corollary \ref{finiteqi},
to obtain the bound,
\begin{eqnarray*}{}
l_\rho(r_n^{-1}(\alpha_n))  & \leq & \left(2\pi + G(a_n))\right)\left\lceil\frac{l_h(\alpha_n)}{G(\nu_n)}\right\rceil \\
 & \leq & {(2\pi+G(a_n)\over G(a_n)} l_h(\alpha_n) + (2\pi+G(a_n)).\\
 \end{eqnarray*}

Since $\{{\rm Dome}(\Omega_n)\}$ converges to $\chb$, $\{ r_n\}$ converges to $r$,
\hbox{$l_h(\alpha_n)\le l_{\tau_n}(\beta)$} for all $n$, and $\{\tau_n\}$ converges to $\tau$
(see Lemma \ref{quasiapprox}), $\{\alpha_n\}$ converges, up to subsequence, to 
a geodesic path $\alpha_\infty$ on $\chb$ joining $r(\beta(0))$ to $r(\beta(1))$.
Since $r_n(\alpha_\infty)$
lies arbitrarily close to $\alpha_n$ (for large enough $n$ in the subsequence)
which is properly homotopic to $r_n\circ\beta$,
we see that $\alpha_\infty$ is homotopic to $r\circ\beta$, and hence agrees with $\alpha$.

Lemma 9.1 in \cite{BC03} gives that
if $\nu>0$ is a lower bound for the injectivity radius of $\Omega$, in
its Poincar\'e metric, then $g(\nu)$
is a lower bound for the injectivity radius of $\chb$. Since $\{\alpha_n\}$ converges to
$\alpha$, up to subsequence, 
\hbox{$\lim{\rm inj}_{{\rm Dome}(\Omega_n)}(z)={\rm inj}_{\chb}(z)$} for all $z\in \Omega$ (see Lemma \ref{quasiapprox} again), and the injectivity radius function
is 1-Lipschitz on any hyperbolic surface,
we see that \hbox{$\liminf k_n\ge g(\nu)$}. So, we may choose $a_n$ so that 
\hbox{$\lim a_n=g(\nu)$},
Therefore, by taking limits, we see that
$$l_\rho(r^{-1}(\alpha))  \leq {(2\pi+G(g(\nu))\over G(g(\nu))} l_h(\alpha) +
 (2\pi+G(g(\nu))).$$

The curve $r^{-1}(\alpha)$ contains an arc joining $r(\beta(0))$ to $r(\beta(1))$
in the homotopy class of $\beta$, so
$$l_\rho(r^{-1}(\alpha))\ge l_\rho(\beta)=d_{\tilde\Omega}(z,w).$$
Therefore,
$$d_{\tilde\Omega}(z,w)\le {(2\pi+G(g(\nu))\over G(g(\nu))} d_{{\rm Dome}(\tilde\Omega)}(\tilde r(z),\tilde r(w)) + (2\pi+G(g(\nu))).$$
\end{proof}

\medskip\noindent
{\bf Remark:} The proof of Corollary \ref{MMconj} also implies that $\tilde r$ is not
a quasi-isometry with respect to the Poincar\'e metric if $\Omega$ is not uniformly
perfect.  Since any homotopically
non-trivial curve in $\Omega$ has length greater than $2\pi$ in the Thurston metric
(by Lemma \ref{Thurstonlengthbound}), $\tilde r$ is not even a quasi-isometry
with respect to the (lift of  the) Thurston metric on $\tilde\Omega$ if $\Omega$ is not
uniformly perfect.

\medskip

We now use the Douady-Earle extension theorem to obtain a quasiconformal
map homotopic to the nearest point retraction whenever $\Omega$ is uniformly perfect.
The constants obtained in this proof are explicit, but clearly far from optimal.

\medskip\noindent
{\bf Corollary \ref{upqc}.}
{\em
If $\Omega$ is uniformly perfect and $\nu>0$ is a lower bound for its
injectivity radius in the Poincar\'e metric, then there is a conformally natural
$M(\nu)$-quasiconformal map $\phi:\Omega\to\chb$ which admits a
bounded homotopy to $r$, where
$$M(\nu)=4(10)^8e^{70N(\max\{2\sqrt{2}(k+{\pi^2\over 2 \nu}),L(\nu)\},L_0)}$$
and $N(K,C)= e^{1546K^4\max\{C,1\} }$.
Moreover, if $\Omega$ is not uniformly perfect, then there does
not exist a bounded homotopy of $r$ to a quasiconformal map.
}

\begin{proof}
A close examination of the standard proof that quasi-isometries  of $\Hp$ extend to
quasisymmetries of \hbox{$S^1=\partial_\infty\Hp$}, see, e.g.,  Lemma 3.43 of \cite{kapovich} and Lemma 5.9.4 of  \cite{ThBook}, yields that a \hbox{$(K,C)$}-quasi-isometry of $\Hp$ extends to a \hbox{$N(K,C)$}-quasisymmetry
of $S^1$.
The Beurling-Ahlfors extension of a $k$-quasisymmetry of $S^1$ is a $2k$-quasiconformal
map (see \cite{lehtinen}). Proposition 7 of Douady-Earle \cite{douady-earle} then implies
that every $k$-quasisymmetry admits a conformally natural  quasiconformal extension
with dilatation at most \hbox{$4(10)^8e^{70k}$.}

First suppose that $\Omega$ is uniformly perfect and
identify both $\tilde\Omega$ and ${\rm Dome}(\tilde\Omega)$ with $\Hp$.
Then $\tilde r$ lifts to a \hbox{$(\max\{J(\nu),L(\nu)\},L_0)$}-quasi-isometry  of $\tilde \Omega$
and so extends to
a \hbox{$N(\max\{J(\nu),L(\nu)\},L_0)$}-quasisymmetry of $S^1$, which itself extends to a conformally natural $M(\nu)$-quasiconformal map \hbox{$\tilde g:\tilde\Omega\to{\rm Dome}(\tilde\Omega)$}. It follows from Proposition 4.3.1 and Theorem 4.3.2 in
Fletcher-Markovic \cite{FM} that the straight-line homotopy between $\tilde r$ and
$\tilde g$ is bounded.
Since $\tilde r$ is the lift of a conformally natural map and the Douady-Earle extension
is conformally natural, we see that $\tilde g$ descends to a conformally natural
$M(\nu)$-quasiconformal map \hbox{$g:\Omega\to\chb$} and that the straight-line
homotopy between $\tilde r$ and $\tilde g$ descends to a bounded homotopy.

A quasiconformal homeomorphism
between hyperbolic surfaces is a quasi-isometry (see \cite[Theorem 4.3.2]{FM}). Thus, if $r$ admits a bounded
homotopy to a quasiconformal map, it must itself be a quasi-isometry. Therefore,
by Corollary \ref{MMconj}, if $r$ admits a bounded homotopy to a quasiconformal
map, $\Omega$ must be uniformly perfect.

\end{proof}

\section{Round annuli}
\label{round}

In this section, we will consider the special case of round annuli. A hyperbolic domain
contains points with small injectivity radius (in the Poincar\'e metric)
if and only if it contains  round annuli with
large modulus, so round annuli are natural test cases for the constants we obtain. Moreover,
hyperbolic domains which are not uniformly perfect contain round annuli of
arbitrarily large modulus.

Let $\Omega(s)$ denote the round annulus lying
between concentric circles of radius $1$ and $e^s>1$ about the origin. One may
calculate that in the Poincar\'e metric  $\Omega(s)$ is
a complete hyperbolic annulus with core curve of length ${2\pi^2\over s}$ and that
${\rm Dome}(\Omega(s))$ is a complete hyperbolic annulus with core curve of
length $2\pi\over \sinh({s\over 2})$ (see Example 3.A in Herron-Minda-Ma \cite{HMM05}
and Theorem 2.16.1 in Epstein-Marden \cite{EM87}).
More explicitly, $\Omega(s)$ is homeomorphic
to $S^1\times \mathbb{R}$ with the metric
$$ds^2=\left({\pi\over s}\right)^2\cosh^2(t)d\theta^2+dt^2$$ and
${\rm Dome}(\Omega(s))$ is homeomorphic
to $S^1\times \mathbb{R}$ with the metric
$$ds^2=\left({1\over \sinh({s\over 2})}\right)^2\cosh^2(t)d\theta^2+dt^2.$$
The nearest point retraction
$r_s:\Omega(s)\to {\rm Dome}(\Omega(s))$ takes the core curve of $\Omega(s)$ to the core
curve of ${\rm Dome}(\Omega(s))$. More generally, it takes any cross-section of
$\Omega(s)$ to a cross-section of ${\rm Dome}(\Omega(s))$ (and restricts to a dilation)
and takes
any vertical geodesic perpendicular to the core curve of $\Omega(s)$ to
a vertical geodesic perpendicular to the core curve of ${\rm Dome}(\Omega(s))$.

Let \hbox{$\nu(s)={\pi^2\over s}$} be the minimal value  for the injectivity radius on $\Omega(s)$.
One may check that there exists a constant $C>0$
such that if \hbox{${\rm inj}_{\Omega(s)}(x)=\sinh^{-1}(1)$}, then 
\hbox{${\rm inj}_{{\rm Dome}(\Omega(s))}(r_s(x))\ge C$.}
We assume from now on that \hbox{$\nu(s)<\sinh^{-1}(1)$}. 
Let $$t_s=\cosh^{-1}\left({1\over\sinh^{-1}({\pi^2\over s})}\right)=O(\log(s))$$
as $s\to\infty$.
Choose points 
$x_1=(\theta_0,t_s) $ and $x_2=(\theta_0,-t_s)$ in $\Omega(s)$
for fixed $\theta_0\in S^1$. Notice that 
\hbox{${\rm inj}_{\Omega(s)}(x_1)= {\rm inj}_{\Omega(s)}(x_2)=\sinh^{-1}(1)$} and
$x_1$ and $x_2$ lie on opposite sides of a geodesic perpendicular to the core curve of
$\Omega(s)$. 
Therefore, $r_s(x_1)$ and $r_s(x_2)$ lie on opposite sites of a geodesic perpendicular
to the core curve of ${\rm Dome}(\Omega(s))$. Since, 
\hbox{${\rm inj}_{{\rm Dome}(\Omega(s))}(r(x_i))\ge C$,} we see that 
$$d(r_s(x_1),r_s(x_2))\ge 2\cosh^{-1}\left({\sinh(C)\over\sinh^{-1}({\pi \over \sinh({s\over 2})})}\right)=
O(s)$$ as $s\to \infty.$
It follows that the Lipschitz constant of $r_s$ is at least
$$ {\cosh^{-1}\left({\sinh(C)\over\sinh^{-1}({\pi \over \sinh({s\over 2})})}\right)\over
\cosh^{-1}\left({1\over\sinh^{-1}({\pi^2 \over s})}\right)}=O\left({s\over \log s}\right)$$
as $s\to \infty$. But, since $\nu(s)={\pi^2\over s}$, this expression is
also \hbox{$O({1\over|\log \nu(s)|\ \nu(s)})$} as $\nu(s)\to 0$. Notice that the Lipschitz constant
in Corollary \ref{poincquant} is 
\hbox{$2\sqrt{2}(k+{\pi^2\over \nu})=O(1/\nu)$} as $\nu\to 0$,
suggesting that our constants are close to having the correct form.
(We remark that Corollary 1.2 contradicts part (3) of Theorem 2.16.1 in \cite{EM87}, which
is incorrect as stated.)

If the lift of the nearest point retraction to a map
from $\tilde \Omega(s)$ to ${\rm Dome}(\tilde\Omega(s))$  is a
\hbox{$(K(s),C(s))$}-quasi-isometry, then
$$K(s)\ge  {\pi\sinh({s\over 2}) \over s}=O\left({e^{s\over 2}\over s}\right)$$
as $s\to\infty$, since it takes the lift of the core curve of $\Omega(s)$ to the lift of
the core curve  of ${\rm Dome}(\Omega(s))$ equivariantly. 
Notice that this expression is also \hbox{$O(\nu(s) e^{\pi^2\over2\nu(s)})$} as $\nu(s)\to 0$,
and that \hbox{$L(\nu)=O(e^{\pi^2\over 2\nu})$} in Theorem \ref{nprlift}, so again our constants
are close to having the right form. This also illustrates the fact that the quasi-isometry
constants of the lift will typically be larger than the quasi-isometry constants of
the original map.

We now consider the Thurston metric on the round annulus $\Omega(s)$. In
this case, the nearest point retraction is a homeomorphism. One may extend the
analysis in section \ref{fingeom} to show that the nearest point retraction
is an isometry, with respect to the Thurston metric, on lines through the origin.
On a circle about the origin, with length $L$ in the Thurston metric,
the nearest point retraction is a dilation with dilation constant ${L-2\pi \over L}$.
In particular, each such circle has length greater than $2\pi$ and the core curve of
$\Omega(s)$ has length \hbox{$2\pi+{2\pi\over \sinh({s\over 2})}$} in the Thurston metric.
(One may also derive these facts more concretely using Lemma \ref{projhoro} to 
compute the Thurston metric. See Example 3.A in \cite{HMM05} for an explicit calculation.)
It follows that the nearest point retraction
is a \hbox{$(1,2\pi)$}-quasi-isometry for all $\Omega(s)$. It is natural to ask what the
best possible general quasi-isometry constants are for the nearest point retraction.

\section{Appendix: Proofs of intersection number estimates}

In this appendix we give the proofs of Lemmas \ref{thickintbound} and \ref{shortgeointbound}.
The proof of Lemma \ref{thickintbound} follows the same outline as the proof
of Lemma 4.3 in \cite{BC03}, but is technically much simpler as we are working
in the setting of finitely punctured domains. We give the proof of Lemma \ref{thickintbound}
both to illustrate the simplifications achieved and to motivate the proof of Lemma
\ref{shortgeointbound} which has many of the same elements. 

Throughout this appendix, $X$ will be a finite set of points in $\rs$, $\Omega = \rs - X$ and $\chb = \partial CH(X)$.

\subsection{Families of support planes}
We begin by associating  a family of support planes
to a geodesic arc $\alpha$ in $\chb$. If these support planes all intersect, we will obtain
a bound on $i(\alpha)$. The proof of this bound is much easier in our simpler
setting than in \cite{BC03}. Our lemma is essentially a special case of Lemma 4.1 in \cite{BC03}.

Let $\alpha:[0,1]\to \chb$ be a geodesic arc transverse to the edges of $\chb$.
Let \hbox{$\{x_i = \alpha(t_i)\}_{i=1}^{n-1}$} be the finite collection of intersection points
of the interior $\alpha((0,1) )$ of $\alpha$ with edges of $\chb$.  
If  the initial point of $\alpha$ lies on an edge, we denote the point by $x_0$ and the edge
by $e_0$,
while if the endpoint of $\alpha$ lies on an edge, we denote the point $x_n$ and the edge
by $e_n$. Suppose that $x_i$ lies on edge $e_i$ and
that $e_i$ has exterior dihedral angle $\theta_i$. We let $F_i$ be
the face containing $\alpha(t_{i-1},t_i)$. (If $\alpha(0)$ lies on an edge $e_0$,
we let $F_0$ be the face abutting $e_0$ which does not contain $\alpha(0,t_1)$.)
At each point $x_i$, there is a 1-parameter family of support planes
\hbox{$\{P^i_s\ |\  0 \leq s \leq \theta_i\}$} where $P^{i}_{0}$ contains $F_{i}$
and the dihedral angle between $P^i_s$ and $P^i_t$ is \hbox{$|s-t|$.}
Concatenating these one parameter families we obtain the one parameter family of support planes
\hbox{$\{P_t\ |\ 0 \leq t \leq  i(\alpha)\}$} along $\alpha$, called the 
{\em full family of support planes} to $\alpha$.

In general, we will allow our families of support planes to omit sub-families
of support planes associated to the endpoints.
We say that 
$$\{ P_t\ |\ a\le t\le b\} \subset \{ P_t\ |\ 0\le t\le i(\alpha)\}$$
is a {\em family of support planes} to $\alpha$  if
\begin{enumerate}
\item
either $a=0$ or $\alpha(0)$ lies on
an edge $e_0$ and $a\le\theta_0$, and
\item
either $b=i(\alpha)$ or 
$\alpha(1)$ lies on an edge $e_n$ and $i(\alpha)-b\le \theta_n$.
\end{enumerate}
Notice that
a full family of support planes is a family of support planes in this definition.

\begin{lemma}
\label{sptbnd}
Suppose that $\Omega$ is the complement of a finite
collection $X$ of points in $\rs$ and $\alpha:[0,1] \rightarrow \chb$ is a  geodesic path
(which may  have endpoints on edges).
If $\{P_t\ | \ a\le t\le b\}$ is a family  of support planes along $\alpha$
such that the associated half-spaces satisfy $H_t\cap H_a\ne\emptyset$ for all $t\in[a,b]$,  then
$$ |b-a|  \leq \psi < \pi$$
where $\psi$ is the exterior dihedral angle between $P_a$ and $P_b$.
Moreover, $\alpha$ intersects each edge at most once.
\end{lemma}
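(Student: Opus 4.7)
The plan is to interpret $|b-a|$ as the cumulative exterior dihedral angle accumulated by the support family along $\alpha$: the parameter $t$ is constant along each face-segment and advances by $\theta_i$ across each edge crossing $x_i$ (with partial contributions at the two endpoints if they lie on edges). This intrinsic ``rotation'' must then be bounded by the extrinsic dihedral angle $\psi$ between $P_a$ and $P_b$, and this is precisely where the intersection hypothesis $H_t\cap H_a\neq\emptyset$ will come into play. I plan to proceed by induction on the number of interior edges of $\chb$ crossed by $\alpha$ between parameters $a$ and $b$.

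For the base case the sub-family $\{P_t\colon a\le t\le b\}$ lies in a single rotation about one edge $e$, so every $P_t$ contains $e$. Passing to the $2$-plane in $\Ht$ perpendicular to $e$ reduces the claim to an elementary planar fact: two half-planes sharing a common boundary line overlap if and only if the rotation angle between them is strictly less than $\pi$, and in that case the rotation equals the exterior dihedral angle exactly. This handles both $|b-a|=\psi$ and $\psi<\pi$ simultaneously, and also gives the ``$\alpha$ intersects each edge at most once'' conclusion in this base situation since any rotation sweeping past $\pi$ would violate the intersection condition.

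For the inductive step I would subdivide at an intermediate parameter $t^*$ lying in the interior of a face between two consecutive edge crossings. The inductive hypothesis, applied to each sub-family (both of which inherit the intersection condition from continuity of $t\mapsto H_t$), yields $|t^*-a|\le\psi_1$ and $|b-t^*|\le\psi_2$, where $\psi_1,\psi_2$ are the respective extrinsic dihedral angles. The remaining and main obstacle is the subadditivity $\psi_1+\psi_2\le\psi$, which is nontrivial because the three support planes $P_a,P_{t^*},P_b$ generally do not share a common axis in $\Ht$. My approach here is to pick a common point $q\in H_a\cap H_b$, adjusted so that $q\in H_{t^*}$ as well (using the intersection hypothesis and, if needed, a small perturbation made possible by the openness of the half-spaces), and work in the unit sphere of directions at $q$. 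There the three support planes cut out great circles and the three half-spaces correspond to open hemispheres containing a common neighborhood; subadditivity of the angles $\psi_1,\psi_2,\psi$ then reduces to the spherical triangle inequality applied to the poles of these hemispheres.

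Finally, the assertion that $\alpha$ meets each edge of $\chb$ at most once follows from the same rotation-tracking argument: if $\alpha$ crossed some edge $e$ at two distinct parameters inside $[a,b]$, the cumulative rotation would include $2\theta_e$, and by examining the orientation of the outward normal to $P_t$ one sees that after the second crossing $H_t$ has been pushed to the far side of $H_a$, forcing $H_t\cap H_a=\emptyset$ at some intermediate $t$ and contradicting the hypothesis. The hardest step, as noted, will be making the spherical additivity argument at $q$ fully rigorous; once that is in place, the inductive machinery and the base-case planar picture assemble the proof.
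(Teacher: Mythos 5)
Your proposal has two genuine gaps, both located at the step you yourself flag as the hardest. First, in the inductive step the second sub-family $\{P_t : t^*\le t\le b\}$ does \emph{not} inherit the hypothesis of the lemma: to apply the inductive hypothesis to it you would need $H_t\cap H_{t^*}\neq\emptyset$ for all $t\in[t^*,b]$, and this does not follow from $H_t\cap H_a\neq\emptyset$ or from continuity. Indeed, the failure of exactly this kind of inheritance is why the application of this lemma (in the proof of the thick-part intersection bound) requires a three-case analysis in which one repeatedly restarts the family at the first plane whose half-space misses $H_0$. Second, and more seriously, the spherical triangle inequality at a common point $q$ gives $\psi\le\psi_1+\psi_2$, which is the \emph{opposite} of the subadditivity $\psi_1+\psi_2\le\psi$ you need. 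The inequality you need is false for arbitrary triples of support planes with pairwise intersecting half-spaces (take $P_b=P_a$, so $\psi=0$ while $\psi_1=\psi_2>0$); it holds here only because the support planes of the convex hull turn monotonically in one direction along $\alpha$, and establishing that monotonicity is the actual content of the lemma, not something you can assume.

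For comparison, the paper's proof supplies precisely this missing monotonicity: it intersects every plane $P_t$ with the fixed plane $P_a$ (possible because $H_t\cap H_a\neq\emptyset$ forces $P_t\cap P_a$ to be a geodesic $g_t$), and shows that $\{g_t\}$ is a continuously varying family of \emph{pairwise disjoint}, linearly ordered geodesics in $P_a$ --- if two of them crossed, some $P_u$ would cut into the interior of a face of $CH(X)$, contradicting $H_u\cap CH(X)=\emptyset$. This ordering immediately gives that $\alpha$ meets each edge at most once (your rotation-tracking sketch for that conclusion is also not yet an argument), and the angle bound then follows by accumulating the inequalities $\psi_k+\theta_k\le\psi_{k+1}$, each obtained from the angle defect of the hyperbolic triangle cut out on the unique plane perpendicular to $P_a$, $P_k$ and $P_{k+1}$. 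Your base case (all planes through one edge) is fine, but without the disjointness-of-$g_t$ argument or an equivalent, the inductive machinery does not assemble into a proof.
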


{\bf Proof:}
Let $\{x_i\}$ be the intersection points of $\alpha$ with edges $\{e_i\}$. 
Let \hbox{$\{P_t\ |\ a_i \leq t \leq b_i\}$} be the support planes to $e_i$
in the family of support planes $\{P_t\ | \ a\le t\le b\}$. As \hbox{$H_t \cap H_a \neq \emptyset$,} then $g_t=P_t\cap P_a$ is a geodesic  if $P_t \neq P_a$.
If $e_i\subset P_a$, then $g_t=e_i$ for all $t\in [a_i,b_i]$ such that $P_t\ne P_a$.
If $e_i$ is not on $P_a$ then \hbox{$\{g_t|\ a_i \leq t \leq b_i\}$} is a continuously varying family of
disjoint geodesics lying between $g_{a_i}$ and $g_{b_i}$ on $P_a$. 

If every edge $\alpha$ intersects is contained in $P_a$, then the result is obvious, so we will assume
that there is an edge which $\alpha$ intersects which does not lie in $P_a$.
Choose $d$ so that $e_{d+1}$
is the first edge $\alpha$ intersects that does not belong to $P_a$.  

If an edge $e_j$ with $j>d$ lies in $P_a$ choose $e_j$
to be  the first edge such that $j> d$ and $e_j$ is on $P_a$.  Otherwise, choose $e_{j-1}$ to be the
last edge which $\alpha$ intersects.
By construction, \hbox{$j>d+1$,} since $x_{d+1}$ does not lie on $P_a$.
Notice that \hbox{$g_t = e_d$} for \hbox{$a_d < t \leq b_d$} and that
\hbox{$\{g_t\ |\  b_d\leq  t \leq b_{j-1}\} $}
is a continuous family of geodesics on $P_a$. 
If the family of geodesics
\hbox{$\{g_t \ |\  b_d\leq t \leq b_{j-1}\}$} are not all disjoint, then there 
must exist $i>d$, such that if \hbox{$t\in (a_{i+1},b_{i+1}]$,} then $g_t$ lies on the same
side of $g_{a_{i+1}}$ as $g_{a_{i}}$. However, this implies that  there exists
\hbox{$u\in (a_{i+1},b_{i+1}]$} such that $P_u$ intersects the interior of the face of $\chb$
contained in $P_{a_i}$. Since $P_u$ bounds a half-space  $H_u$ disjoint
from $CH(X)$, this is a contradiction. Therefore, \hbox{$\{g_t \ |\  b_d\leq t \leq b_{j-1}\}$} is a disjoint family of
geodesics.

If $e_j$ lies in $P_a$, then
there is a support plane $P_t$ separating some point on  $e_d$ from a point on $e_j$.  This is a contradiction. Therefore no edge $e_j$ with \hbox{$j>d$} lies in $P_a$ and \hbox{$b_{j-1}=b$,} so
\hbox{$\{g_t \ |\  b_d\leq t \leq b\}$} is a disjoint family of geodesics on $P_a$.
It follows that $\alpha$ intersects each edge exactly once, since otherwise there would be
distinct values  \hbox{$s,t\in[b_d,b]$} such that \hbox{$P_s=P_t$} and hence \hbox{$g_s=g_t$.}

Let $\psi_i$ be the exterior dihedral angle between $P_a$ and $P_{a_i}$ 
let \hbox{$\theta_i=b_i-a_{i}$} be the dihedral angle between 
$P_{b_i}$ and $P_{a_i}$ for all \hbox{$i> d$.}
Let $m$ be the maximal value of $i$ for an edge $e_i$
and let \hbox{$\psi_{m+1}=\psi$} be the exterior dihedral angle between $P_a$ and $P_b$
and let \hbox{$\theta_m=b_m-a_m$} denote the dihedral angle between $P_{a_m}$ and $P_{b}$.

We let $P_k = P_{a_k}$  for $k \leq m$ and $P_{m+1} = P_b$.
Now for \hbox{$d+1 \leq k \leq m$} we let $P$
be the unique plane perpendicular to the planes $P_a$, $P_{k}$, and 
$P_{k+1}$ (see figure \ref{mono}). Considering the triangle $T$ on $P$ given by the intersection with
the three planes, we see  that 
$$\psi_{k} + \theta_{k} \leq \psi_{k+1}.$$ 
Then, by induction, 
$$\psi_{d+1} + \sum_{i=d+1}^{m} \theta_i  \leq   \psi_{m+1}$$
As $\psi_{d+1} = \theta_{d}$, we obtain
$$|b-a| = \sum_{i=d}^m \theta_i \leq \psi_{m+1}=\psi.$$
\begin{figure}[htbp] 
   \centering
\includegraphics[width=4in]{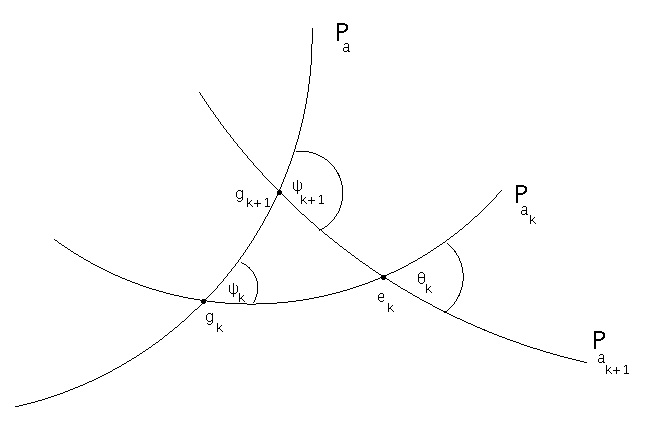}
   \caption{$ \psi_k + \theta_k \leq \psi_{k+1}$}
\label{mono}
\end{figure}

\eproof

\subsection{Facts from hyperbolic trigonometry}

The following elementary facts from hyperbolic geometry, which were
established in \cite{BC03}, will be used in the proof of Lemma \ref{thickintbound}.
 
\begin{lemma}{\rm (\cite[Lemma 3.1]{BC03})}
\label{isosceles}
Suppose that $T$ is a hyperbolic triangle with a side of length $C$ and opposite angle
equal to $\gamma$. Then the triangle 
has perimeter at most
$$C+2\sinh^{-1}\left(\frac{\sinh(C/2)}{\sin(\gamma/2)}\right)$$
and this maximal perimeter is realized uniquely by an isosceles triangle.
\end{lemma}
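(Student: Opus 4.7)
The plan is to fix the side of length $C$ with its opposite angle $\gamma$, and parameterize the remaining data by the two other side lengths $a$ and $b$, reducing the problem to a one-parameter optimization thanks to the hyperbolic law of cosines. The key identity to exploit is
$$\cosh C = \cosh a \cosh b - \sinh a \sinh b \cos\gamma,$$
which, after substituting $s = a+b$, $d = a-b$, and using the product-to-sum formulas $\cosh a \cosh b = \tfrac{1}{2}(\cosh s + \cosh d)$ and $\sinh a \sinh b = \tfrac{1}{2}(\cosh s - \cosh d)$, rearranges to
$$\cosh C = \sin^2(\gamma/2)\,\cosh s + \cos^2(\gamma/2)\,\cosh d.$$

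From this single equation I would read off the entire lemma. Solving for $\cosh s$ gives
$$\cosh s \;=\; \frac{\cosh C - \cos^2(\gamma/2)\,\cosh d}{\sin^2(\gamma/2)},$$
so maximizing the perimeter $C + s$ (with $C$ and $\gamma$ fixed) is equivalent to minimizing $\cosh d$. Since $\cosh$ is strictly minimized at $0$, the optimum occurs exactly when $d = 0$, i.e.\ when $a = b$. This establishes both the existence and the uniqueness of the maximizer as the isosceles triangle.

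To finish, I substitute $d = 0$ and use $\cosh C - 1 = 2\sinh^2(C/2)$ and $\cos^2(\gamma/2) = 1 - \sin^2(\gamma/2)$:
$$\cosh s \;=\; 1 + \frac{\cosh C - 1}{\sin^2(\gamma/2)} \;=\; 1 + \frac{2\sinh^2(C/2)}{\sin^2(\gamma/2)}.$$
Applying $\cosh s - 1 = 2\sinh^2(s/2)$ and taking square roots gives $\sinh(s/2) = \sinh(C/2)/\sin(\gamma/2)$, so the maximal perimeter equals
$$C + 2\sinh^{-1}\!\left(\frac{\sinh(C/2)}{\sin(\gamma/2)}\right),$$
as claimed. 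The only minor subtlety is checking that the isosceles configuration actually occurs as a genuine triangle (i.e.\ $s > 0$ is attained with positive $a = b$), which is immediate since $\sinh^{-1}$ is positive on positive arguments whenever $C, \gamma > 0$; there is no genuine obstacle in the argument beyond recognizing that the law of cosines, after a symmetric change of variables, separates cleanly into a piece depending on $s$ and a piece depending on $d$.
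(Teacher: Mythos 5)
Your proof is correct. The identity
$$\cosh C=\sin^2(\gamma/2)\cosh(a+b)+\cos^2(\gamma/2)\cosh(a-b)$$
does follow from the law of cosines via the product-to-sum formulas, and it cleanly separates the sum $s=a+b$ from the difference $d=a-b$; since $\cos^2(\gamma/2)\ge 0$ and $\cosh d$ is uniquely minimized at $d=0$, the perimeter $C+s$ is maximized exactly at the isosceles triangle, and your evaluation of $s$ at $d=0$ via $\cosh x-1=2\sinh^2(x/2)$ gives precisely the stated bound. Note that the paper does not prove this lemma at all --- it is quoted from Lemma 3.1 of \cite{BC03} --- so there is no in-text argument to compare against; your derivation stands as a correct, self-contained proof (the argument in \cite{BC03} is a more geometric variational one, whereas yours reduces everything to a single trigonometric identity, which is arguably cleaner). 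The only point worth making explicit, which you do address, is that the extremal isosceles configuration is actually realized: with $a=b=s/2>0$ and included angle $\gamma$, reversing the law-of-cosines computation shows the opposite side is indeed $C$.
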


\begin{lemma}
\label{arcanglebound}
{\rm (\cite[Lemma 3.2]{BC03})}
Let $P_0$, $P_1$ and $P_2$ be planes in $\Ht$ such that  $\partial P_0$
and $\partial P_1$ are tangent in $\rs$
and $\partial P_1$ and $\partial P_2$ are also tangent in $\rs$.
If $L\le2\sinh^{-1}(1)$ and $\beta:[0,1]\to \Ht$ is a curve of length at most $L$
which intersects all three planes, then
$P_0$ and $P_2$ intersect with interior dihedral angle $\theta$, where 
$$\theta \geq 2 \cos^{-1}\left(\sinh(L/ 2)\right).$$
\end{lemma}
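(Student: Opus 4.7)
My plan is to reduce the claim to a single inequality by normalizing coordinates, then extract that inequality from the curve-length hypothesis via two elementary distance computations in the upper half-space model.

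First I would apply a hyperbolic isometry of $\Ht$ to send the tangent point $q_1 = \partial P_0 \cap \partial P_1$ to $\infty$ (so that $P_0$ and $P_1$ become parallel vertical half-planes) and the tangent point $q_2 = \partial P_1 \cap \partial P_2$ to the origin. After a further translation in $x$ and a scaling, we may take $P_1 = \{y=0\}$, $P_0 = \{y=a\}$ for some $a>0$, and $P_2$ the hemisphere over the Euclidean circle of radius $s$ centered at $(0,s)$ (tangent to the real axis at $0$), for some $s>0$. The planes $P_0$ and $P_2$ meet in $\Ht$ iff $a<2s$, and a direct Euclidean-normal computation at any intersection point gives $\cos\theta = (a-s)/s$ for the interior dihedral angle $\theta$ (the wedge on the side containing $P_1$). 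The half-angle identity then reads
\[
\cos^2(\theta/2) \;=\; \frac{1+\cos\theta}{2} \;=\; \frac{a}{2s},
\]
so the desired bound $\theta \ge 2\cos^{-1}(\sinh(L/2))$ is equivalent to the single inequality $a/(2s) \le \sinh^2(L/2)$. Note that the hypothesis $L\le 2\sinh^{-1}(1)$ gives $\sinh^2(L/2)\le 1$, which will automatically force $a<2s$ and make the dihedral angle well-defined.

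To establish this inequality, I would pick $p_i \in \beta \cap P_i$ with $p_1$ lying on $\beta$ between $p_0$ and $p_2$ (the natural situation in the intended application, where the $P_i$ are support planes encountered successively along an arc on $\chb$), and let $l_1,l_2$ be the two sub-arc lengths, so $l_1+l_2\le L$. Writing $p_1=(x_1,0,t_1)$, a short computation gives $d(p_1,P_0) = \sinh^{-1}(a/t_1)$, and applying the hyperbolic inversion in the unit sphere at the origin (which fixes $P_1$, sends $P_2$ to the vertical half-plane $\{y=1/(2s)\}$, and sends $p_1$ to a point of height $t_1/(x_1^2+t_1^2)$) yields $d(p_1,P_2) = \sinh^{-1}\!\bigl((x_1^2+t_1^2)/(2s\,t_1)\bigr)$. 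The triangle-inequality bounds $d(p_1,P_0)\le l_1$ and $d(p_1,P_2)\le l_2$ therefore read
\[
a \le t_1\sinh(l_1), \qquad x_1^2+t_1^2 \le 2s\,t_1\sinh(l_2).
\]
Discarding the nonnegative $x_1^2$ in the second gives $t_1 \le 2s\sinh(l_2)$, and multiplying,
\[
a \;\le\; t_1\sinh(l_1) \;\le\; 2s\,\sinh(l_1)\sinh(l_2) \;\le\; 2s\,\sinh^2(L/2),
\]
where the final step uses the elementary concavity fact that $\sinh(l_1)\sinh(l_2)$ subject to $l_1+l_2\le L$, $l_1,l_2\ge 0$ is maximized at $l_1=l_2=L/2$.

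The main obstacles are bookkeeping rather than deep: one must identify which wedge at $P_0\cap P_2$ is the ``interior'' one so that $\cos\theta=(a-s)/s$ carries the correct sign, and execute the inversion carefully so that the distance formula to the hemisphere $P_2$ comes out as stated. Both are short Euclidean computations once the normalization is fixed, and everything then assembles into the claimed angle bound $\theta \ge 2\cos^{-1}(\sinh(L/2))$.
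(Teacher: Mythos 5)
The paper itself gives no proof of this lemma --- it is imported from \cite[Lemma 3.2]{BC03} --- so there is nothing internal to compare against; I can only assess your argument on its own terms. The computations are all correct: the normalization, the identity $\cos^{2}(\theta/2)=a/(2s)$ for the interior wedge, the distance formulas $d(p_1,P_0)=\sinh^{-1}(a/t_1)$ and $d(p_1,P_2)=\sinh^{-1}\bigl((x_1^2+t_1^2)/(2st_1)\bigr)$ obtained via the inversion, and the final chain $a\le t_1\sinh(l_1)\le 2s\sinh(l_1)\sinh(l_2)\le 2s\sinh^{2}(L/2)$, where the last step follows from $\sinh(l_1)\sinh(l_2)=\tfrac12\bigl(\cosh(l_1+l_2)-\cosh(l_1-l_2)\bigr)\le\sinh^{2}(L/2)$.

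Be aware, though, that the two assumptions you build into the setup --- that $\partial P_2$ is tangent to $\partial P_1$ on the same side as $\partial P_0$, and that $\beta$ meets $P_1$ \emph{between} its encounters with $P_0$ and $P_2$, so that $l_1+l_2\le L$ --- are not consequences of the hypotheses as stated, and the conclusion genuinely fails without them. For the ordering: take $s=a=1$ in your coordinates, so $P_1$ is the plane over $\{y=0\}$, $P_0$ the plane over $\{y=1\}$, and $P_2$ the hemisphere over the unit circle centered at $(0,1)$; then $\theta=\pi/2$, the point $(0,1,1)$ lies on $P_0\cap P_2$ at distance exactly $\sinh^{-1}(1)$ from $P_1$, and the geodesic realizing that distance is a curve of length $\sinh^{-1}(1)<2\sinh^{-1}(1)$ meeting all three planes --- yet $\pi/2<2\cos^{-1}\bigl(\sinh(\sinh^{-1}(1)/2)\bigr)\approx 2.20$. (Similarly, if $\partial P_2$ is tangent on the opposite side, $P_0$ and $P_2$ need not intersect at all, even when a short curve meets all three planes.) So your parenthetical ``natural situation in the intended application'' is load-bearing: the ordering must be promoted to an explicit hypothesis, and what you have proved is a corrected version of the statement rather than the statement as transcribed here. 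This is harmless for the paper, since in both places the lemma is invoked (Case 3 of the proof of Lemma \ref{thickintbound}, and the proof of Lemma \ref{shortgeointbound}) the three planes are consecutive support planes encountered in order along a subarc of $\alpha$, so both conditions hold; presumably the precise statement in \cite{BC03} encodes them. But you should state the needed hypotheses up front rather than smuggle them in through the normalization.
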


In the proof of Lemma \ref{shortgeointbound} we will need the following
extension of Lemma \ref{arcanglebound} which bounds the length
of closed curves intersecting all three sides of a triangle.

\begin{lemma}
\label{trianglefact}
Let $P_0$, $P_1$ and $P_2$ be planes in $\Ht$ such that  $\partial P_0$
and $\partial P_1$ are tangent in $\rs$
and $\partial P_1$ and $\partial P_2$ are also tangent in $\rs$.
If $P_0$ and $P_2$ intersect at an interior angle of $\theta$
and $\beta:[0,1]\to \Ht$ is a  closed curve which intersects all three planes,
then $l_h(\beta)\ge R(\theta)$, where
$$R(\theta) =  \left\{ \begin{array}{ll}
2\sinh^{-1}\left(\frac{1}{\tan(\theta/2)}\right)  &  \theta > \pi/2 \\
2\sinh^{-1}\left(\frac{\sin(\theta)}{\tan(\theta/2)}\right) &  \theta \leq \pi/2
\end{array}
\right.
$$ 
\end{lemma}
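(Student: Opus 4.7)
The plan is to reduce $l_h(\beta)$ to the perimeter of a geodesic triangle with one vertex on each of $P_0, P_1, P_2$, compute the infimum of this perimeter via an unfolding argument (or equivalently, via a degenerate-configuration on the edge $g = P_0 \cap P_2$), and then verify the two-case formula for $R(\theta)$ by an elementary trigonometric inequality.

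First, I would pick intersection points $A \in \beta \cap P_0$, $B \in \beta \cap P_1$, and $C \in \beta \cap P_2$, splitting $\beta$ into three arcs. Each arc has length at least the hyperbolic distance between its endpoints, so
\[
l_h(\beta) \geq d(A,B) + d(B,C) + d(C,A),
\]
and it suffices to bound this perimeter below by $R(\theta)$ for every admissible triple $(A,B,C)$.

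Second, I would normalize the configuration via hyperbolic isometries: place $g = P_0 \cap P_2$ as the vertical line over $0$ in the upper half-space model, so that $P_0, P_2$ are vertical half-planes meeting at interior dihedral angle $\theta$ along $g$, and $P_1$ becomes a hemisphere whose boundary circle is tangent to $\partial P_0$ and $\partial P_2$ with centre on the angle bisector (taken at unit Euclidean distance from the origin, giving radius $\sin(\theta/2)$). Letting $\sigma_i$ denote the reflection through $P_i$, one argues via the classical reflection-unfolding principle that the infimum of the triangle perimeter equals the translation length $\tau(\sigma_0\sigma_1\sigma_2)$, and that this infimum is attained (as a limit) by the degenerate triangle with $A=C$ at the foot of the perpendicular from $P_1$ to $g$. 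Using the distance-to-hemisphere formula
\[
\sinh d((0,0,z), P_1) = \frac{\cos^2(\theta/2) + z^2}{2 z \sin(\theta/2)},
\]
which is minimized at $z = \cos(\theta/2)$ with value $\cot(\theta/2)$, this infimum evaluates to $2\sinh^{-1}(\cot(\theta/2))$.

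For $\theta > \pi/2$ this equals $R(\theta)$ and the lemma follows immediately. For $\theta \leq \pi/2$, the elementary identity $\sin\theta / \tan(\theta/2) = 2\cos^2(\theta/2)$ together with the inequality $\cot(\theta/2) \geq 2\cos^2(\theta/2)$ (equivalent to $\sin\theta \leq 1$) gives
\[
2\sinh^{-1}(\cot(\theta/2)) \geq 2\sinh^{-1}\!\left(\frac{\sin\theta}{\tan(\theta/2)}\right) = R(\theta),
\]
so the required lower bound also holds in this regime. The main obstacle is establishing rigorously that the infimum of the triangle perimeter is indeed controlled by the degenerate configuration $A=C$ on $g$: one must rule out non-degenerate triangles with smaller perimeter, either via a first-variation / Snell's-law argument at the vertices, or by identifying the minimum perimeter with the translation length of $\sigma_0\sigma_1\sigma_2$ and computing this translation length uniformly in $\theta$ (a computation that is easily verified by hand in the representative case $\theta = \pi/2$, where one obtains $2\sinh^{-1}(1) = \log(3+2\sqrt{2})$).
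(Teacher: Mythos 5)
Your first reduction (replacing $l_h(\beta)$ by the perimeter $d(A,B)+d(B,C)+d(C,A)$ of a triangle with $A\in P_0$, $B\in P_1$, $C\in P_2$) is fine, and your computation that the distance from $g=P_0\cap P_2$ to $P_1$ is $D=\sinh^{-1}(\cot(\theta/2))$ is correct. The gap is exactly at the step you flag as the main obstacle, and it is not repairable as stated: the infimum of the perimeter is \emph{not} $2\sinh^{-1}(\cot(\theta/2))$ when $\theta<\pi/2$, and the degenerate configuration $A=C$ on $g$ minimizes only when $\theta\ge\pi/2$. The paper's proof makes this explicit: projecting onto the plane $P$ orthogonal to all three $P_i$ reduces the problem to a hyperbolic triangle $T\subset P$ with one vertex $v$ of interior angle $\theta$ and two ideal vertices; reflecting $T$ across its two sides through $v$ and joining the two copies $x_l,x_r$ of the foot of the perpendicular from $v$ by a geodesic yields, after folding back, a closed curve meeting all three sides of $T$ whose length is $2\sinh^{-1}(\sinh(D)\sin\theta)=2\sinh^{-1}(\sin\theta/\tan(\theta/2))$. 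For $\theta<\pi/2$ this is strictly smaller than $2\sinh^{-1}(\cot(\theta/2))$ because $\sin\theta<1$. Concretely, for $\theta=\pi/3$ there is a closed curve of length $2\sinh^{-1}(3/2)\approx 2.39$ meeting all three planes, below your claimed lower bound $2\sinh^{-1}(\sqrt{3})\approx 2.63$. So your intermediate assertion $l_h(\beta)\ge 2\sinh^{-1}(\cot(\theta/2))$ is false in that range, and your final step deduces a true conclusion from a false premise.

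The trigonometric inequality $\cot(\theta/2)\ge \sin\theta/\tan(\theta/2)$ therefore goes the wrong way: it relaxes a bound you have not established rather than proving the bound you need. The two branches of $R(\theta)$ are not ``the degenerate value, weakened in one case''; they are the true minimal perimeters in the two regimes, the dichotomy being whether the angle $2\theta$ subtended at $v$ by $vx_l$ and $vx_r$ in the unfolded picture is at least $\pi$ (in which case the minimizer degenerates to the doubled perpendicular of length $2D$) or less than $\pi$ (in which case the minimizer is a genuine Fagnano-type inscribed triangle). To fix the case $\theta\le\pi/2$ you must carry out the unfolding for an arbitrary competitor: any closed curve in $T$ meeting all three sides unfolds to a path from the side of $T_l$ containing $x_l$ to the side of $T_r$ containing $x_r$, and comparing with the isosceles configuration with two sides of length $D$ and apex angle $2\theta$ at $v$ gives the sharp lower bound $2\sinh^{-1}(\sinh(D)\sin\theta)=R(\theta)$. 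Your reflection-group heuristic would, if pursued, detect the same phenomenon, since the minimizing closed ``billiard'' path is non-degenerate precisely when $\theta<\pi/2$.
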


\begin{figure}[htbp] 
   \centering
\includegraphics[width=3in]{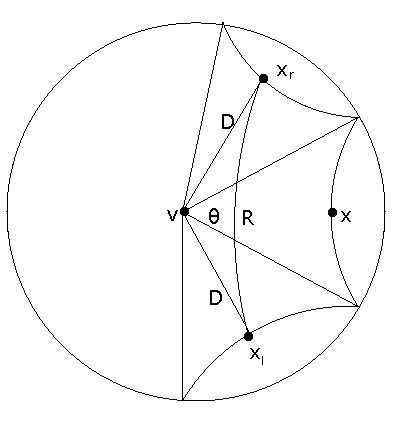}
   \caption{Triangle $T_\theta$}
\label{triangle}
\end{figure}

\begin{proof}{}
Let $P$ be the plane which intersects $P_0$, $P_1$ and $P_2$ orthogonally and let
$T$ be the triangle they bound in $P$. If $\beta'$ is the projection of $\beta$ onto $P$,
then \hbox{$l_h(\beta')\le l_h(\beta)$} and  $\beta'$ 
intersects all three sides of $T$. Therefore, it suffices
to bound the length of a curve in $P$ which intersects all three sides of $T$. Since $T$ is convex it suffices to consider curves which lie in $T$.

Let $v$ be the non-ideal vertex of $T$ and
let $D$ be length of the perpendicular $\tau$ which joins a point $x$  in
the opposite side of $T$ to $v$. 
Then, by hyperbolic trigonometry, e.g. \cite[Theorem 7.11.2]{beardon},
$$\sinh(D) = \frac{1}{\tan(\theta/2)}.$$
In $\Hp$ we take  two copies $T_l$ and $T_r$ of $T$ and glue $T_l$ to left edge of $T$ 
containing $v$ and $T_r$ to the right edge containing $v$. We label the points in the copies corresponding to $x$ by $x_l$ and $ x_r$. We can join $x_l$ and $ x_r$ by edges of length $D$ to $v$
which subtend an angle $2\theta$ at $v$ in the union of the three triangles
(see figure \ref{triangle}). If \hbox{$\theta < \pi/2$,}  we join the points $x_l$ and $x_r$
by a geodesic $g$ in the union of the three copies and let \hbox{$R = d(x_l,x_r)$.} Then we have an isoceles triangle with vertices $x_l$ , $x_r$, and $v$ and two
sides of length $D$ which make an angle $2\theta$.  We may decompose this triangle
into two right angled triangles with hypotenuse of length $D$
and again apply \cite[Theorem 7.11.2]{beardon} to  show that
$$\sinh(D) = \frac{\sinh(l_h(g)/2)}{\sin(\theta)}.$$
If $\theta\ge{\pi\over 2}$, then we join $x_l$ to
$x_r$ by a path $g$ of length $2D$ made from copies of $\tau$ in $T_l$ and $T_r$.

In either case, $g$ projects to a closed curve in $T$ meeting all three sides. To complete
the proof, we must show that $g$ projects to the minimal length such curve. If $g'$ is the minimal curve, then we may unfold it to a geodesic joining a point $x_l'$ to the side of $T_l$ containing
$x_l$ to a point $x_r'$ in the side of $T_r$ containing $x_r$. Moreover, if we join
$x_l'$ and $x_r'$ to $v$ by geodesics, the two geodesics each have length at least $D$
and make an angle of $2\theta$. It follows that the length of $g'$ is at least as great
as that of $g$, with equality if and only if $g=g'$.  
\end{proof}

\subsection{Proof of Lemma \ref{thickintbound}}
We are now prepared to prove Lemma \ref{thickintbound}, whose statement
we recall below:

\medskip
\noindent{\bf Lemma \ref{thickintbound}}{\em \ If \ $\Omega$ is the complement of a finite
collection $X$ of points in $\rs$ and $\alpha:[0,1] \rightarrow \chb$ is a  geodesic path with
$l_{h}(\alpha) \le G({\rm inj}_{\chb}(\alpha(s))),$ for some $s \in[0,1]$, then
$$i(\alpha) \le 2\pi.$$
}

\medskip

{\bf Proof:} 
Our assumptions imply that there exists $s_0\in [0,1]$ such that
${\rm inj}_{\chb}(\alpha(s_0))\le  F(l_h(\alpha))$ (since $G=F^{-1}$).
We may assume that $l_h(\alpha[0,s_0])\le l_h(\alpha)/2$.
(If this is not the case we simply consider $\bar\alpha:[0,1]\to \chb$ given
by $\bar\alpha(s)=\alpha(1-s)$ and notice that $i(\alpha)=i(\bar\alpha)$.)

We first handle the case where $\chb$ is contained in a plane. In this case,
if $i(\alpha)>2\pi$, then there exists a subarc $\alpha_1$ containing
$\alpha(s_0)$ with $i(\alpha_1)=2\pi$.
Recall that $\chb$ is the double of $CH(X)$ in this case.
Both endpoints of $\alpha_1$ must lie in the same ``copy'' of $CH(X)$.
One may then join the endpoints of $\alpha_1$ by a geodesic arc $\beta$
in this copy, so that $\delta=\alpha_1\cup\beta$ is homotopically non-trivial
in $\chb$ and $l(\delta)<2l(\alpha)$. It follows that
$${\rm inj}_{\chb}(\alpha(s_0))<l_h(\alpha)<F(l_h(\alpha))$$
which is a contradiction.

We now move on to the general case.
Let $\{ P_t\ |\ 0\le t\le i(\alpha)\}$ be the full
family of support planes to $\alpha$. There are three cases.

\medskip\noindent{\bf Case 1:} If $H_t \cap H_0 \neq \emptyset$ for all $t$,
then Lemma \ref{sptbnd} implies that  $$i(\alpha) \leq \pi.$$

If we are not in case 1, let $P_{t_1}$ be the first support plane with $H_{t_1} \cap H_0 = \emptyset$

\noindent{\bf Case 2:}  If $H_t \cap H_{t_1}$ for all $t \geq t_1$,
then by applying Lemma \ref{sptbnd} to the two families
$\{ P_t\ | \ 0\le t< t_1\}$ and $\{P_t\ |\ t_1\le t\le i(\alpha)\}$, we have
$$i(\alpha) \leq \pi + \pi = 2\pi.$$

\noindent{\bf Case 3:} If we are not in case (1) or (2), we let  $P_{t_2}$ be the first support plane such that \hbox{$H_{t_2} \cap H_{t_1} = \emptyset$.}
Consider the configuration of three planes $P_0$, $P_{t_1}$, and $P_{t_2}$.
Let \hbox{$\alpha_1=\alpha|_{[0,s_1]}$} be the subarc of $\alpha$ of minimum length having
\hbox{$\{P_t\ |\ 0 \leq t \leq t_2\}$} as a family of support planes.
Let $x$ and $y$ be the endpoints of $\alpha_1$. Lemma \ref{arcanglebound}
implies that $P_0$ and $P_{t_2}$ intersect and have dihedral angle $\theta$ satisfying
$$\theta \geq 2 \cos^{-1}(\sinh(l_h(\alpha_1)/2)).$$

We join $x$ to  $y$ by a shortest path $\beta$ on \hbox{$P_0 \cup P_{t_2}$} which intersects
\hbox{$P_0 \cap P_{t_2}$} at  a point \hbox{$z \in P_0 \cap P_{t_2}$.}
We take the triangle $T$ with vertices $x$, $y$, and $z$. 
Then the interior angle $\theta_z$ of $T$ at $z$ satisfies \hbox{$\theta_z \geq \theta$.}
Since $x$ and $y$ lie in $\alpha_1$, \hbox{$d(x,y) \leq l_h(\alpha_1)$.} 
Then, by Lemma \ref{isosceles},
$$d(x,z) + d(z,y) \leq 2\sinh^{-1}\left(\frac{\sinh(d(x,y)/2)}{\sin(\theta_z/2)}\right) \leq 2\sinh^{-1}\left(\frac{\sinh(l_h(\alpha_1)/2)}{\sin(\theta/2)}\right) .$$
Substituting the bound for $\theta$ we get
$$d(x,z) + d(z,y) \leq  2\sinh^{-1}\left(\frac{\sinh(l_h(\alpha_1)/2)}{\sqrt{1-\sinh^2(l_h(\alpha_1)/2)}}\right) .$$
If $\delta=\alpha_1\cup\beta$, then
$$l_h(\delta) \leq l_h(\alpha_1) + 2\sinh^{-1}\left(\frac{\sinh\left( {l_h(\alpha_1)\over 2}\right)}{\sqrt{1-\sinh^{2}\left({l_h(\alpha_1)\over 2}\right)}}\right) = 2F(l_h(\alpha_1)).$$

We next show that $r(\delta)$ is homotopically non-trivial on $\chb$. Since the restriction
of $r$ to \hbox{$\Ht-{\rm int}(CH(X)))$} is a homotopy equivalence onto $\chb$, it
suffices to show that $\delta$ is homotopically non-trivial in \hbox{$\Ht-{\rm int}(CH(X)).$}
Let $b=\alpha(s_1)$ be the first point on $\alpha$ which has support plane $P_{t_1}$.
Then $b$ lies on an edge $e$ of $\chb$ which is contained in $P_{t_1}$. Let $Q_b$
be the plane through $b$ which is perpendicular to $P_{t_1}$ and let $R_b$ be
the intersection of $Q_b$ with the closed half-space bounded by $P_{t_1}$ which
is disjoint from ${\rm int}(CH(X))$. Lemma \ref{sptbnd} implies that $\alpha_1$
intersects $b$ exactly once at $\alpha(s_1)$ and the intersection is transverse.
By construction, $\beta$ cannot intersect $R_b$ and $\alpha_1$ cannot intersect
$R_b-b$. Therefore, $\delta$ intersects $R_b$ exactly once and does so transversely.
Since $R_b$ is a properly embedded half-plane in \hbox{$\Ht-{\rm int}(CH(X))$,} we see that
 $\delta$ is homotopically non-trivial in \hbox{$\Ht-{\rm int}(CH(X))$} as desired.

Since $r$ is 1-Lipschitz on $\Ht$, 
$$l_h(r(\delta))\le l_h(\delta)\le 2F(l_h(\alpha_1))<2F(l_h(\alpha)).$$
In particular, since $r(\delta)$ is homotopically non-trivial and passes
through $\alpha(0)$, we see that
$${\rm inj}_{\chb}(x)\le F(l_h(\alpha))$$
for all $x\in\alpha_1$. If \hbox{$\alpha(s_1)\in \alpha_1$}, i.e. if \hbox{$s_0\le s_1$,} then we
have achieved a contradiction and we are done.
If $\alpha(s_0)$ does not lie in $\alpha_1$, then let \hbox{$\alpha_2=\alpha|_{[s_1,s_0]}$.}
Then \hbox{$\gamma=\bar\alpha_2 * \bar r(\beta) * \alpha_1 *\alpha_2$} is a homotopically
non-trivial loop through $\alpha(s_0)$ such that
$$l_h(\gamma)=2l_h(\alpha_2)+l_h(r(\delta)) < l_h(\alpha)+2F(l_h(\alpha_1)).$$
Since
$$l_h(\alpha)+2F(l_h(\alpha_1))<l_h(\alpha)+2F(l_h(\alpha)/2)<
2F(l_h(\alpha))$$
we have again achieved a contradiction.
\eproof

\subsection{Proof of Lemma \ref{shortgeointbound}}

We now establish Lemma \ref{shortgeointbound} which bounds intersection number
for short simple closed geodesics.

\medskip\noindent {\bf Lemma \ref{shortgeointbound}}
{\em Suppose that $\Omega$ is the complement of finite collection $X$ of points in $\rs$. 
If $\gamma$  is a simple closed geodesic of length less than $2\sinh^{-1}(1)$, then
$$2\pi\le i(\gamma)  \leq 2\pi + 2\tan^{-1}(\sinh(l_h(\gamma)/2)) \leq \frac{5\pi}{2} $$
}
\begin{proof}
We first note that if  $\chb$ lies in a plane and $\gamma$ is a simple closed geodesic
on $\chb$, then 
$\gamma$ is the double of a simple geodesic arc in $CH(X)$, so $i(\gamma) = 2\pi$.  

For the remainder of the proof, we assume that $CH(X)$ does not lie in a plane.
We choose a parameterization \hbox{$\alpha: [0,1] \rightarrow \chb$} of $\gamma$ which is an embedding on $(0,1)$ and such that \hbox{$\alpha(0)=\alpha(1)$} does not lie on an edge of $\chb$.
Recall that Lemma \ref{intlowerbound} implies that \hbox{$i(\gamma)\ge 2\pi$.}

Let \hbox{$\{ P_t\ |\ 0\le t\le i(\alpha)\}$} be the full
family of support planes to $\alpha$.
If we proceed
as in the proof of Lemma \ref{thickintbound}, we must be in case (3).
Therefore, we obtain support planes $P_0$, $P_{t_1}$ and $P_{t_2}$,
such that $\partial P_0$ and $\partial P_{t_1}$ are tangent on $\rs$, as
are $P_{t_1}$ and $P_{t_2}$, and a subarc
$\alpha_1$ of $\alpha$ which begins at \hbox{$\alpha(0)\in P_0$,} ends at
\hbox{$\alpha(s_1)\in P_{t_2}$} and intersects $P_{t_1}$.  Lemma \ref{arcanglebound} implies
that $P_0$ and $P_{t_2}$ intersect. Lemma \ref{trianglefact} implies that if
$\theta$ is the angle of intersection of $P_0$ and $P_{t_2}$, then
\hbox{$\theta \geq R^{-1}(l_h(\gamma)).$}
As  $R$ is decreasing and \hbox{$R(\pi/2) = 2\sinh^{-1}(1) \geq l_h(\gamma)$}
we see that  \hbox{$\theta \geq \pi/2$.}

Therefore,
$$\theta \geq R^{-1}(l_h(\gamma)) = 2\tan^{-1}\left(\frac{1}{\sinh\left(\frac{l_h(\gamma)}{2}\right)}\right)$$
Let  $\phi = \pi-\theta \leq \pi/2$ be the exterior dihedral angle between $P_0$ and
$P_{t_2}$. Then,
$$\phi \leq 2\tan^{-1}(\sinh(l_h(\gamma)/2)).$$

Let $\beta_0$ be
the shortest path  on \hbox{$P_0 \cup P_{t_2}$}  joining $\alpha(0)$ to $\alpha(s_1)$
and let $z_0$ be the point of intersection of $\beta_0$ with \hbox{$P_0 \cap P_{t_2}$.} 
Since
$$d_h(\alpha(0),\alpha(s_1))\le l_h(\gamma)/2,$$ 
Lemma \ref{isosceles} implies that
$$l_h(\beta_0) \leq 2 \sinh^{-1}\left(\frac{\sinh(d_h(\alpha(0),\alpha(s_1))/2)}{\sin(\theta/2)}\right),$$
so
$$\sinh(l_h(\beta_0)/2) \leq \frac{\sinh(l_h(\gamma)/4)}{\sin(\theta/2)}.$$
Since $\theta \ge {\pi\over 2}$ and $l_h(\gamma)<2\sinh^{-1}(1)$,
we see that
$$\sinh(l_h(\beta_0)/2) \leq \frac{\sinh(\frac{\sinh^{-1}(1)}{2})}{\sin(\frac{\pi}{4})}\approx .6436  <1.$$
It follows that $l_h(\beta_0)/2<\sinh^{-1}(1)$.

We let \hbox{$\delta_0=\alpha_1\cup\beta_0$.} We may argue, exactly as in the proof of
Lemma \ref{shortgeointbound}, that $r(\delta_0)$ is homotopically non-trivial on $\chb$.
Since \hbox{$\alpha_1\subset\gamma$,} \hbox{$l_h(\beta_0)/2<\sinh^{-1}(1)$} and
$N(\gamma)$ has width \hbox{$w(\gamma)>\sinh^{-1}(1)$,} we see that $r(\delta_0)$ is
contained in $N(\gamma)$, so is
homotopic to a non-trivial power of $\gamma$.

We now describe a process to replace $\beta_0$ by a path $\beta_n$ on
$\chb$ such that \hbox{$i(\beta_n) \leq \phi $} and $\beta_n$ is homotopic to $r(\beta_0)$.
Once we have done so, we can form
\hbox{$\delta_n = \alpha_1\cup \beta_n$} and observe that
$$i(\delta_n) \leq i(\alpha_1) + \phi \le 2\pi+\phi\le
2\pi+ 2 \tan^{-1}(\sinh(l_h(\alpha_1)/2))\le{5\pi\over 2}.$$ 
Our result follows,
since $\delta_n$ is homotopic to a non-trivial power of $\gamma$ and a geodesic
minimizes intersection number in its homotopy class.

We label $Q_0 = P_{t_2}$ and let \hbox{$P_{0}\cap Q_{0} = h_0$.} If $h_0$ is an edge then $P_0$ and $Q_{0}$ are adjacent and $\beta_0$ is on $\chb$ and we are done. 

Otherwise $\beta_0$ intersects an edge on $Q_{0}$ labelled $e_0$, such that $h_0$ and
$e_0$ are disjoint. We choose $e_0$ to be the last edge on $Q_0$ that $\alpha$ intersects.
Let $y_0$ be the first point of intersection of  $\beta_0$ and $e_0$.  Let $F_1$ be the face
of $\chb$ which abuts $e_0$ and does not contain a subarc of $\beta_0$. Let $Q_1$ be the support plane containing $F_1$.

Define  $h_1 = P_0 \cap Q_1$.  Since $h_0$ and $h_1$ are both contained in
support planes to $e_0$, they are disjoint.  Since $\alpha(0)\in \chb$, $h_0$  must separate $h_1$ from $\alpha(0)$ in $P_0$.
Therefore, $\beta_0$ intersects $h_1$ at a point $z_1$ between $z_0$ and $\alpha(0)$.
We construct $\beta_1$ by replacing the subarc of $\beta_0$ joining $y_0$ to $z_1$ by  the geodesic  arc in $F_1$ joining $y_0$ to $z_1$. Notice that $r(\beta_1)$ is
homotopic to $r(\beta_0)$ in $\chb$.
If we let $\theta_1$ be the exterior dihedral angle between $Q_0$ and $Q_1$ and let
$\phi_1$ be the exterior dihedral angle between $Q_1$ and $P_0$, then
$$\theta_1 + \phi_1 \leq \phi.$$

If $h_1$ is an edge, then
$\beta_1$ lies on $\chb$ and we are done since 
$$i(\beta_1) = \theta_1  + \phi_1 \leq \phi.$$

\begin{figure}[htbp] 
   \centering
\includegraphics[width=4in]{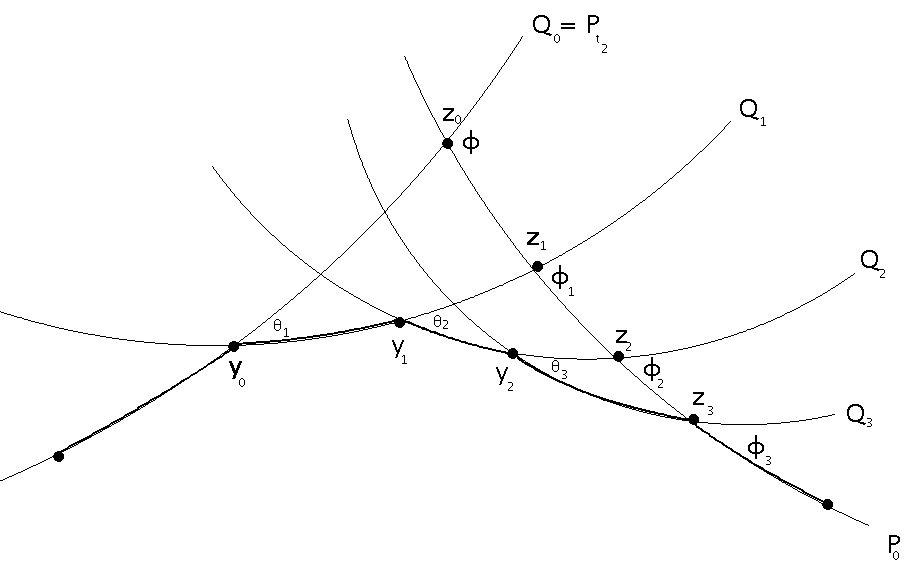}
   \caption{$\beta_n = \beta_3$}
\label{project}
\end{figure}

If $h_1$ is not an edge, we define $\beta_n$
by an iterative procedure.
Assume $Q_k$ are defined as above, for \hbox{$k = 0,\ldots, m$,} such that 
$h_{m} = Q_{m}\cap P_0$ is not an edge. Define $\theta_k $ to be the exterior dihedral angle between $Q_{k-1}$ and $Q_k$ and $\phi_k$  to be the exterior dihedral angle between
$Q_k$ and $P_0$. We assume, by induction, that
$$\sum_{k=1}^m \theta_k + \phi_m \leq \phi.$$
Let $h_m$ be the last edge on $Q_m$ which $\beta_m$ intersects, and  let $y_m$
be the first point of intersection of $\beta_m$ and $h_m$.
Let $F_{m+1}$ be the face adjacent to $e_m$ which does not agree with $F_m$
and let $Q_{m+1}$ be the support plane it is contained in.
As before, if we let \hbox{$h_{m+1}=Q_{m+1}\cap P_0$,} then $h_{m+1}$ separates $h_m$
from $\alpha(0)$.
Therefore,  $\beta_m$ intersects $h_{m+1}$ in a point $z_{m+1}$.
We construct $\beta_{m+1}$ by replacing the piecewise geodesic subarc of $\beta_m$ connecting $y_{m+1}$ to $z_{m+1}$ by the geodesic  arc in $F_{m+1}$ joining $y_{m+1}$
to $z_{m+1}$.  Again, notice that $r(\beta_{m+1})$ is homotopic to $r(\beta_m)$.
Furthermore,
$$\theta_{m+1} +\phi_{m+1} \leq \phi_m.$$ 
Therefore, 
$$\sum_{k=1}^{m+1}\theta_k + \phi_{m+1} = \sum_{k=1}^n \theta_k + (\theta_{m+1} + \phi_{m+1}) \leq \sum_{k=1}^m \theta_m + \phi_{m}  \leq \phi.$$

Notice that the support planes $\{Q_k\}$ in this procedure have disjoint
intersection with $P_0$ and each contain a face $F_k$. Therefore, as there are only a finite number of faces, the process terminates and some $h_n$ is an edge of $\chb$.
Then, $\beta_n$ lies on $\chb$ and our argument is complete.
\end{proof}

\end{document}